%%%%%%%%%%%%%%%%%%%%%%% file template.tex %%%%%%%%%%%%%%%%%%%%%%%%%
%
% This is a general template file for the LaTeX package SVJour3
% for Springer journals.          Springer Heidelberg 2010/09/16
%
% Copy it to a new file with a new name and use it as the basis
% for your article. Delete % signs as needed.
%
% This template includes a few options for different layouts and
% content for various journals. Please consult a previous issue of
% your journal as needed.
%
%%%%%%%%%%%%%%%%%%%%%%%%%%%%%%%%%%%%%%%%%%%%%%%%%%%%%%%%%%%%%%%%%%%
%
% First comes an example EPS file -- just ignore it and
% proceed on the \documentclass line
% your LaTeX will extract the file if required
% [arxiv_v2: filecontents  stripped, 198 chars]
\RequirePackage{fix-cm}
\documentclass[smallextended]{svjour3}       % onecolumn (second format)
\smartqed  % flush right qed marks, e.g. at end of proof
\usepackage{graphicx}
\usepackage{caption}
\usepackage{subcaption}
\captionsetup{compatibility=false}
%\usepackage[demo]{graphicx}
%% For creating math operators
\usepackage{amsopn}

\usepackage{graphicx}
\usepackage{amsmath}
\usepackage{amsfonts}
\usepackage{amssymb}
\usepackage{multirow,booktabs,dcolumn}
\usepackage{import}
%% Get support for coloring parts of the text
\usepackage{xcolor}
\usepackage{algorithm}
\usepackage{algorithmic}

\newcommand{\executeiffilenewer}[3]{%
	\ifnum\pdfstrcmp{\pdffilemoddate{#1}}%
	{\pdffilemoddate{#2}}>0%
	{\immediate\write18{#3}}\fi%
}

\newcommand{%
	\executeiffilenewer{.svg}{.pdf}%
	{inkscape -z -C --file=.svg %
		--export-pdf=.pdf --export-latex}%
	\input{.pdf_tex}%
}[1]{%
	\executeiffilenewer{#1.svg}{#1.pdf}%
	{inkscape -z -C --file=#1.svg %
		--export-pdf=#1.pdf --export-latex}%
	\input{#1.pdf_tex}%
}
\graphicspath{{fig/}}
%
% \usepackage{mathptmx}      % use Times fonts if available on your TeX system
%
% insert here the call for the packages your document requires
%\usepackage{latexsym}
% etc.
%
% please place your own definitions here and don't use \def but
% \newcommand{}{}
%
% Insert the name of "your journal" with
% \journalname{myjournal}
%

\let\theoremstyle\relax

\usepackage{amsthm}
\usepackage{mathtools}              
\newtheorem{mytheorem}{Theorem}%[section]
\newtheorem{mycorollary}{Corollary}%[theorem]
\newtheorem{mylemma}[theorem]{Lemma}
\theoremstyle{remark}
\newtheorem{myremark}{Remark}
\theoremstyle{definition}
\newtheorem{mydefinition}{Definition}

\begin{document}

\title{A gradient descent akin method for inequality constrained optimization%\thanks{Grants or other notes
%about the article that should go on the front page should be
%placed here. General acknowledgments should be placed at the end of the article.}
}
%\subtitle{Do you have a subtitle?\\ If so, write it here}

%\titlerunning{Short form of title}        % if too long for running head

\author{Long Chen         \and
        Wenyi Chen \and
        Kai-Uwe Bletzinger %etc.
}

%\authorrunning{Short form of author list} % if too long for running head

\institute{L. Chen \at
              Technical University of Munich, Germany \\
              Tel.: +49 (89) 289 - 22462\\
              Fax: +49 (89) 289 - 22421\\
              \email{long.chen@tum.de}           %  \\
%             \emph{Present address:} of F. Author  %  if needed
           \and
           W. Chen \at
              Wuhan Univeristy, China \\
			\email{wychencn@whu.edu.cn}    \\
           \and
			K.-U. Bletzinger \at
			Technical University of Munich, Germany \\
			\email{kub@tum.de}
}

%\date{Received: date / Accepted: date}
\date{Submitted}
% The correct dates will be entered by the editor

\maketitle

\begin{abstract}
We propose a first-order method for solving inequality constrained optimization problems. The method is derived from our previous work \cite{Chen}, a modified search direction method (MSDM) that applies the singular-value decomposition of normalized gradients. In this work, we simplify its computational framework to a ``gradient descent akin'' method, i.e., the search direction is computed using a linear combination of the negative and normalized objective and constraint gradient. The main focus of this work is to provide a mathematical aspect to the method. We analyze the global behavior and convergence of the method using a dynamical systems approach. We then prove that the resulting trajectories find local solutions by asymptotically converging to the central path(s) for the logarithmic barrier interior-point method under the so-called \textit{relative convex condition}. Numerical examples are reported, which include both common test examples and applications in shape optimization.

\keywords{first-order method; negative and normalized gradients; inequality constrained optimization; central path; nonlinear programming}
% \PACS{PACS code1 \and PACS code2 \and more}
 \subclass{65K05 \and 90C30 \and 90C51 \and 90C90}
\end{abstract}

\section{Introduction}
\label{sec:intro}
%Motivated by the application of structural optimization techniques for shape design problems \cite{haslinger2003introduction}, We discuss the design and analysis of a new first-order method for solving the nonlinear inequality constrained optimization problem
In this paper, we propose a first-order method for solving inequality constrained optimization problems. The problem of interest is
\begin{equation}
	\begin{split}
		&\textnormal{minimize}~~~  f(x), \\
		&\textnormal{subject to}~~ g_i(x) \leq 0, ~ i = 1,...,m, \\
		%& h(x) = 0, \\
	\end{split}
	\label{eq:Optimization_Problem}
\end{equation}
%where $f: \mathbb{R}^n \rightarrow \mathbb{R}$, $g:\mathbb{R}^n \rightarrow \mathbb{R}^m$ are smooth functions.
where $f, g_1,..., g_m: \mathbb{R}^{n} \rightarrow \mathbb{R}$ are twice differentiable.

The present method is derived from our previous work \cite{Chen}, a modified search direction method (MSDM) for inequality constrained optimization problems. MSDM computes a descent direction of the objective function using the singular-value decomposition that exploits the normalized gradient information of a constrained optimization problem. Numerical experiments show that the MSDM finds local solutions by traversing along the central path(s) for the logarithmic barrier method. However, there is a lack of the mathematical theory of the method. The intrinsic optimization parameter in MSDM has remained heuristic. In this work, we simplify the computational framework of MSDM to a ``gradient descent akin" method, i.e., we compute the search direction using a linear combination of the negative and normalized objective and constraint gradient. We analyze the global behavior and convergence using a dynamical system perspective. We prove that the resulting trajectories find local solutions by asymptotically converging to the central path of the logarithmic barrier interior-point method. We should note that in this paper, we do not address the design and analysis of a practical implementation of the method, which is currently an active area of research.

\subsection{Main results}
\label{sec:results}
For problem (\ref{eq:Optimization_Problem}), we make the following assumptions:

\noindent (A1) Coercive condition for the objective function $f(x)$,

$$\lim_{x\rightarrow \infty}f(x)=+\infty;$$

\noindent (A2) \textcolor[rgb]{0,0,0}{$\nabla f(x) \neq 0$ in the feasible set $\Omega=\{x: g_i(x) \leq 0, ~ i = 1,...,m\}$;}

\noindent (A3) $\nabla g_i(x) \neq 0, ~ i = 1,...,m$ in the feasible set $\Omega$;

\noindent (A4) $f$ and $g_i$ are twice continuously differentiable functions.

Under assumptions (A1)$-$(A4), we propose a dynamical system for solving the problem (\ref{eq:Optimization_Problem}) that includes a single inequality constraint $g(x)$,
\begin{equation}
	\frac{d x}{d t} =-\frac{\nabla f}{|\nabla f|}
	-\zeta\frac{\nabla g}{|\nabla g|}, ~~ \zeta \in [0,1),
	\label{eq:modified_search_direction}
\end{equation}
where ``$|\cdot|$" denotes the Euclidean norm. $\nabla f$ and $\nabla g$ are the gradient row vectors of the objective function and constraint function, respectively.

We show analysis of the global behavior of the method. Among others, we propose an error measure $\epsilon$ for the optimization solution, and show that the time in which the present system finds a local solution is $\mathcal{O}(1/\epsilon)$, which is the ergodic convergence rate for first-order methods. We prove global and local convergence of the method. The present system results in a trajectory, which, under the so-called \textit{relative convex condition}, converges to the central path of the interior-point methods as $\zeta \rightarrow 1^-$.

%Provided a barrier function $\Phi(x)$ of the following properties:
%\begin{itemize}
%\item[-] it is infinite everywhere except in $\Omega_-=\{x: g_i(x) < 0, ~ i = 1,...,m\}$;
%\item[-] it is twice continuously differentiable in $\Omega_-$;
%\item[-] its value approaches $+\infty$ as $x$ approaches the boundary of $\Omega$.
%\end{itemize}
Provided the well-known logarithmic barrier function
\begin{equation}
	\Phi(x) = - \sum_{i = 1}^{m} \log (-g_i(x)), ~ i = 1,...,m,
	\label{eq:log_barrier}
\end{equation}
where $\log(\cdot)$ denotes the natural logarithm, 
we propose a generalization of the system (\ref{eq:modified_search_direction}) for the multiple constrained problem with $\zeta\in [0,1)$ as
\begin{equation}
	\frac{d x}{d t}=\left\{
	\begin{split}
		&-\frac{\nabla f}{|\nabla f|}
		-\zeta\frac{\nabla \Phi}{|\nabla \Phi|},  ~~~~~&\text{if}~~\nabla \Phi \neq 0;\\
		& -\nabla f ,                              &\text{if}~~\nabla \Phi = 0.
	\end{split}\right.
	\label{eq: vector_s_multiple}
\end{equation}
Compared to our previous work \cite{Chen}, we simplified the framework to compute the search direction so that the implementation effort and computational cost are reduced significantly. We report computational experiences with various test examples that include both common benchmark problems and applications in shape optimization. Finally, we provide a large-scale real world shape optimization, which, to the best of our knowledge, is unlike any other case previously presented in the literature.

\subsection{Organization of paper}
In section \ref{sec:related}, we first review related works with the focus on theoretical aspects. In section \ref{sec:derivation}, we derive the method for single inequality constrained problems. Preliminary studies are then presented in section \ref{sec:preliminary}. Our main results are presented in section \ref{sec: global_behavior} - \ref{sec:multiple_constraint}. We report numerical examples in section \ref{sec:experiments}, and finally give a conclusion in section \ref{sec:conclusion}.

\section{Related works}
\label{sec:related}
We review related works to share our view of the present method from different well-established perspectives. Here, the emphasis is on the theoretical connections/differences as the main focus of this paper is to provide a theoretical foundation for the present method. We also briefly review the shape optimization as it has motivated the development of the method in this work.

\subsection{Gradient descent method}
The gradient descent method, originally proposed by Cauchy, is a first-order method for unconstrained optimization that uses the negative gradient of the objective function for the design update. In a dynamical system, it writes
$$ \frac{dx}{dt} = -\nabla f.$$
The direction of the gradient descent is the steepest descent direction in the Euclidean norm. A first-order Taylor expansion at current iterate $x$ of the objective function reads
\begin{equation*}
	f(x + d) \approx f(x) + \nabla f(x) d.
\end{equation*}
The steepest descent direction $d$ is found by the optimization problem
\begin{equation*}
	\begin{split}
		&\textnormal{minimize}~~~ \nabla f(x) d, \\
		&\textnormal{subject to}~~~ | d| = 1.
	\end{split}
\end{equation*}
From the Cauchy-Schwarz inequality, we obtain
\begin{equation}
	d = - \frac{\nabla f}{|\nabla f|}, %= \frac{1}{|\nabla f|} d^*.
	\label{eq:steepest_descent}
\end{equation}
which is the negative and normalized objective gradient.
%Gradient descent direction $d^* = -\nabla f$ can be seen as the steepest descent direction $d$ multiplied with $|\nabla f|$. 
Comparing (\ref{eq:steepest_descent}) and (\ref{eq:modified_search_direction}), which linearly combines the negative and normalized objective and constraint gradient, we consider the present system: a gradient descent akin method for inequality constrained optimization problems. We especially note that the present method results in trajectories that are homotopic with the gradient descent trajectory (see Remark \ref{remark1}).

\subsection{Dynamical systems approaches} 
Dynamical systems approaches have been used to study optimization methods in many works of literature. Extensive studies on the connections between interior-point flows with linear programming methods can be found in \cite[Chapter~4]{helmke2012optimization} and the references therein. For quadratic programming problems, \cite{dorr2012smooth} proposes a dynamical system, which results in trajectories that converge to the saddle point of the associated Lagrangian function. \cite{su2014differential} studies the celebrated Nesterov's accelerated gradient method using a dynamical system as the analysis tool. In \cite{lessard2016analysis}, a framework based on dynamical systems is proposed to analyze and design first-order unconstrained optimization methods. In this work, we study the behavior of the present method using the dynamical systems approach. The ODE interpretation of the method allows us to give a rigorous analysis of its global and local convergence. 

In some literature, optimization methods that use dynamical systems are called trajectory methods. These methods construct optimization paths in a way so that one or all solutions to the optimization problem are \textit{a priori} known to lie on these paths \cite{diener1995trajectory}. Typically, these optimization paths are solution trajectories to ODE of first or second-order. Trajectory methods are mainly studied for unconstrained optimizations for finding local solutions \cite{behrman1998efficient}\cite{botsaris1978differential}, and global solutions \cite{griewank1981generalized}\cite{snyman1987multi}. Studies for constrained optimization are, however, very limited, see \cite{ali2018trajectory} and the references therein. In this work, we propose a new dynamical system for inequality constrained optimization problems. The resulting trajectories find local solutions by the limiting behavior as $\zeta \rightarrow 1^-$.

\subsection{Interior-point methods}

Interior-point methods (IPMs), which are based on the Newton method, are among the most competitive methods for constrained optimization problems. The signature of IPM is the existence of continuously parameterized families of approximate solutions that converge to the exact solution asymptotically \cite{Forsgren}. IPMs find a wide variety of applications of convex and nonconvex optimizations in broad fields. There are a vast amount of excellent works that have been devoted to IPM. A comprehensive review of this class of methods is certainly beyond the scope of the present paper, however we refer the interested reader to \cite{Forsgren}\cite{Potra}, more recently, in \cite{gondzio2012interior}, and many other excellent optimization books.  

The present method results in trajectories that asymptotically converge to the central path for a particular IPM, the logarithmic barrier method \cite{fiacco1990nonlinear}. We introduce its connections/differences to the present method in the next. Consider a convex optimization problem of the form (\ref{eq:Optimization_Problem}), we start with its approximated unconstrained problem using the logarithmic barrier function $\Phi$,
\begin{equation}
	\textnormal{minimize} ~~~~  f(x) + \eta \Phi(x),
	%		= & f(x) + \frac{1}{t} \sum\limits_{i= 1}^{m} - log(-g_i(x)), 
	\label{eq:logarithmic_barrier}
\end{equation}
where the barrier parameter $\eta$ is a positive parameter. As $\eta \rightarrow 0$, the solution of the approximated problem converges to the original one. The central path is characterized by the set of points that satisfy the necessary and sufficient conditions \cite{boyd}: 
\begin{equation}
	%	x^* \in \Omega_- ~~~~~\textnormal{and} \\
	0 = \nabla f(x^*) + \eta \nabla \Phi(x^*), ~~x^* \in \Omega_-,
	\label{eq:barrier_central_point}
\end{equation}
where $\Omega_-=\{x: g_i(x) < 0, ~ i = 1,...,m\}$. The conditions (\ref{eq:barrier_central_point}) are interpreted as a modified KKT system in the literature \cite{boyd}\cite{byrd2000trust}\cite{Forsgren}. The barrier method finds an approximated solution for the original problem by 1) iteratively decreasing the barrier parameter $\eta$, and 2) in each iteration, solving the subproblem (the modified KKT system) defined by (\ref{eq:barrier_central_point}) using the Newton method. %In other words, the barrier method finds the solution by sequentially solving for a central point and thus traverse along the central path controlled by the barrier parameter $\eta$. 
Therefore, the barrier parameter $\eta$ can be considered a central path parameter.

In the present method, we do not parameterize the central path. Instead, we normalized the objective and constraint gradients. For optimization problems with a single inequality constraint, we propose the \textit{normalized central path condition} as
\begin{equation}
	\frac{\nabla f}{|\nabla f|} + \frac{\nabla g}{|\nabla g|}  = 0.
	\label{eq:normalized_central_path_condition_single_constraint}
\end{equation}

The condition (\ref{eq:normalized_central_path_condition_single_constraint}) is used in section \ref{sec: global_behavior} and \ref{sec:local_analysis} for the analysis. A generalization of the \textit{normalized central path condition} from single inequality to multiple inequalities is introduced by the use of the logarithmic barrier function (see section \ref{sec:multiple_constraint}) as
\begin{equation}
	\frac{\nabla f}{|\nabla f|} + \frac{\nabla \Phi}{|\nabla \Phi|}  = 0.
	\label{eq:normalized_central_path_condition}
\end{equation}
Compared with the barrier method, the path parameter $\eta$ has vanished. The condition (\ref{eq:normalized_central_path_condition}) characterizes the central path, which differs from (\ref{eq:barrier_central_point}), which instead characterizes a point on the central path.

To illustrate the behavior of the present method in a very rough way, we rewrite the first ODE of system (\ref{eq: vector_s_multiple}) as 
\begin{equation}
	\frac{d x}{d t} =-\frac{\nabla f}{|\nabla f|}(1- \zeta) 
	+ \left( -\frac{\nabla f}{|\nabla f|} - \frac{\nabla \Phi}{|\nabla \Phi|}\right) \zeta, ~~ \zeta \in [0,1).
	\label{eq:homotopy}
\end{equation}
While $-\frac{\nabla f}{|\nabla f|} $ is the steepest descent direction of the objective function, the term $\left( -\frac{\nabla f}{|\nabla f|} - \frac{\nabla \Phi}{|\nabla \Phi|}\right)$ contributes to the centering behavior (referred to Theorem \ref{theorem3}). There are three major differences compared to the barrier method:
\begin{itemize}
	\item[1)] While $\eta$ is a path parameter that controls the asymptotic convergence progress for the barrier method, $\zeta$ is a homotopy parameter that determines the shape of the optimization trajectory in the present method;
	\item[2)] There is no subproblem (modified KKT system) defined and solved in the present method. Instead, we solve directly for the trajectory of the proposed dynamical system. The optimization solutions are known \textit{a priori} to lie on the resulting trajectory;
	\item[3)] The local and global convergence behavior of the present method are markedly different from the barrier method.
\end{itemize}
%\vskip 2mm

The asymptotic convergence for the barrier method along the central path has been extensively studied in {\cite{fiacco1990nonlinear}}. For general inequality-constrained problems (\ref{eq:Optimization_Problem}) under mild assumptions (A1)-(A4), to force global convergence, IPM usually implements a merit function associated line-search method or trust region framework \cite{byrd2000trust}\cite{vanderbei1999interior}\cite{wachter2006implementation}. To provide a global and local convergence theory for the present method, we prove the following:
\begin{itemize}
	\item[-] The resulting trajectory converges to a critical point of the objective function as $\zeta \in [0,1)$ (see Theorem \ref{theorem2});
	\item[-] The trajectory finds a KKT solution upon reaching the boundary of the feasible set $\Omega$ as $\zeta \rightarrow 1^-$, provided that there is no critical point of the objective function in $\Omega$ (see Theorem \ref{theorem1}, \ref{theorem5}, and Remark \ref{remark_fo});
	\item[-] As $\zeta \rightarrow 1^-$, the second-order optimality conditions are automatically satisfied at KKT solutions (see section \ref{sec:local_analysis});
	\item[-] The trajectory is able to switch between central paths to remain a descent direction of the objective function without using an additional framework (referred to Theorem \ref{theorem8} and Lemma \ref{lemma2}).
\end{itemize}

From a more abstract point of view, the difference between the present method and the barrier method may be analogous to the difference between the gradient descent method and the Newton-based method for unconstrained minimizations. While the former seek critical points of the objective function or KKT solutions that satisfy the respective second-order optimality conditions using the negative gradient information, the latter seek the respective solutions using the Newton method.

\subsection{Feasible direction methods}
The method of feasible directions (MFD) dates back to the 1960's by the work of Zoutendijk \cite{zoutendijk1960methods} and has enjoyed fruitful developments for decades. MFDs have been especially popular in the engineering community because of the importance of ending up with a design that satisfies the hard specifications expressed by a set of inequalities \cite{chen2000methods}. The general idea behind the MFD is to move from one feasible design to an improved feasible design iteratively so that a local solution can be found \cite{Arora}. In the present method, remaining feasibility is not a mandatory mechanism. The idea behind the method design is to find a search direction that approaches the central path while maintaining a descent direction of the objective function. Due to this major difference, we do not categorize our method as a method of feasible directions.

In the present method, we compute a search direction that uses normalized gradients. In \cite{stander1993new}, the authors also present a feasible direction method that applies normalized gradients. Their work is then continued and further developed in \cite{de1994feasible} and \cite{stander1995robustness}. In these works, the active-set strategy is used. The common idea is to formulate a linear system under given input criteria on a chosen working set of (active) constraints, and a feasible descent search direction is obtained by solving the linear system. In the present method, we use a barrier function based formulation to treat multiple inequality constraints. The search direction is computed as a linear combination of the negative and normalized objective and barrier gradient.

\subsection{Shape optimization}
As a subset of structural optimization, shape optimization is characterized by a very large or even infinite number of design variables that describe the varying boundary in the optimization process. Introductions to shape optimization are given in \cite{haslinger2003introduction}\cite{sokolowski1992introduction}. Shape optimization is distinct from another well-known problem in structural engineering: topology optimization \cite{bendsoe2013topology} (sometimes referred to as the homogenization method \cite{allaire2012shape}). The main difference is that the topology optimization method removes smoothness and topological constraints in shape optimization, which results in different optimization formulations. Many topology optimization problems can be formulated in an (equivalent) convex optimization problem, while shape optimizations are typically nonlinear, and
often nonconvex \cite{hoppe2007adaptive}. This difference partially contributes to the fact that there are successful implementations of IPM for large-scale topology optimization problems \cite{jarre1998optimal}\cite{kocvara2016primal}\cite{maar2000interior}, but only a few works have presented a shape optimization that uses an IPM as the optimizer \cite{antil2007path}\cite{herskovits2000shape}. In the latter works, the size of the shape optimization problem is only moderate so that the power of IPM is not fully exploited. One of the most successful methods for nonlinear topology optimization is the method of moving asymptotes (MMA) that was introduced by Svanberg in 1987 \cite{svanberg1987method}. In each iteration, MMA generates and solves an approximated convex problem related to the original one. For shape optimization, however, there is as yet no literature that discusses a large-scale problem using MMA.

A notable difficulty for shape optimization is the computation of shape Hessians, which are complex objects even for moderate problems. Analysis of aerodynamic optimization in \cite{arian1999analysis} shows that shape Hessians are ill-conditioned for three-dimensional problems. Recently, several works compute approximated shape Hessians and use a Newton-based method for the design optimization \cite{schillings2011efficient}\cite{schmidt2013three}. For some disciplines, such as computational fluid dynamics or transient coupled problems, even the computation of shape gradient can be a challenge. See for example \cite{albring2016efficient}\cite{korelc2009automation}\cite{reuther1999constrained}, which are actively undergoing investigation.

In general, large-scale shape optimization is mainly performed using gradient descent type method so far \cite{schulz2015}. In engineering practice, a large number of constraints may be considered. The lack of literature in this regard has motivated our development of MSDM in \cite{Chen}. In the present work, we further simplify the computational framework of MSDM to a gradient descent akin method. We provide a mathematical basis for the method's optimization behavior. As a result, the implementation effort and computational cost are reduced significantly. It opens the possibility of shape optimization to a wider range of applications.

\section{Deriving the search direction for single inequality constrained optimizations}
\label{sec:derivation}
In this section, we show the consistent derivation of the system (\ref{eq:modified_search_direction}) from our previous work \cite{Chen} considering a single inequality constraint. First, we review the basic ideas of the modified search direction method and then show the derivation.

In MSDM, at each iteration, we construct a sensitivity matrix
\begin{equation}
	\mathbf{m} = \begin{pmatrix}  \frac{\nabla f}{|\nabla f|} \\ \frac{\nabla g}{|\nabla g|} \end{pmatrix}.
	\label{eq: sensitivity_matrix}
\end{equation}
The change in the objective function $df$ and the constraint function $dg$ resulting from an arbitrary design change $dx  \in \mathbb{R}^n$ reads
\begin{equation}
	\begin{split}
		df = \nabla f dx, \\
		dg = \nabla g dx.
	\end{split}
\end{equation}
A perspective from an input-output system established by the sensitivity matrix $\mathbf{m}$ gives
\begin{equation}
	\begin{pmatrix}  \frac{df}{|\nabla f|} \\\frac{dg}{|\nabla g|} \end{pmatrix} = \mathbf{m} dx.
	\label{eq:input_output_smatrix}
\end{equation}
Applying singular-value decomposition to the sensitivity matrix $\mathbf{m}$,
\begin{equation}
	\mathbf{m} = \mathbf{U} \mathbf{\Sigma} \mathbf{V}^T = \sum_{i = 1}^{min(2,n)} \mathbf{\sigma}_{i} \mathbf{u}_i \mathbf{v}_i^T.
	\label{eq: svd_m}
\end{equation}
Thus, an orthonormal bases set $\mathbf{v}_i, ~i = 1,2$ is obtained. Each $\mathbf{v}_i$ can be used as a base search direction for the design update, and $\mathbf{v}_1$ and $\mathbf{v}_2$ are defined as follows:
\begin{itemize}
	\item[-] $\mathbf{v}_1$: by taking $\delta \mathbf{v}_{1}$ as the design change, we obtain a change in objective as well as in constraint function $[\frac{df}{|\nabla f|}, \frac{dg}{|\nabla g|}]^T = \sigma_{1} \delta \mathbf{u}_{1} $, which is a \textbf{decrease} in the objective function and an \textbf{increase} in the constraint function.
	
	\item[-] $\mathbf{v}_2$: by taking $\delta \mathbf{v}_{2}$ as the design change, we obtain a change in the objective as well as in the constraint function $[\frac{df}{|\nabla f|}, \frac{dg}{|\nabla g|}]^T = \sigma_{2} \delta \mathbf{u}_{2} $, which is a \textbf{decrease} in the objective function and a \textbf{decrease} in the constraint function. 	
\end{itemize}
It is worth mentioning that the design vector $\delta \mathbf{v}_1$ provides a similar result as the filter approach presented in \cite{fletcher2002nonlinear}, which tries to minimize the so-called bi-objective optimization problem with two goals of minimizing the objective function $f$ and the constraint violation $|g|$ (with the difference that $g$ takes different signs in both cases).

With $\mathbf{v}_1$ and $\mathbf{v}_2$, we can then rewrite the normalized steepest descent direction $-\frac{\nabla f}{|\nabla f|}$ as
\begin{equation}
	-\frac{\nabla f}{|\nabla f|} = \cos \alpha_1 \mathbf{v}^T_{1} + \cos \alpha_2 \mathbf{v}^T_{2},
	\label{eq:rewrite_steepest_descent}
\end{equation}
where $\alpha_1$ is the angle between $\mathbf{v}^T_{1}$ and $-\frac{\nabla f}{|\nabla f|}$, and $\alpha_2$ is the angle between $\mathbf{v}^T_{2}$ and $-\frac{\nabla f}{|\nabla f|}$. The modified search direction proposed in \cite{Chen} reads
\begin{equation}
	\mathbf{s}_c = \cos \alpha_1 \mathbf{v}^T_{1} + c \cdot \cos \alpha_2 \mathbf{v}^T_{2},
	\label{eq:modified_search_direction_svd}
\end{equation}
where $c \geq 1$ is introduced to enlarge the contribution of the design mode $\mathbf{v}_2$.

In the following, we show the derivation of (\ref{eq:modified_search_direction}) from (\ref{eq:modified_search_direction_svd}). According to SVD and the definition of $\mathbf{v}_1$ and $\mathbf{v}_2$, we have
\begin{equation}
	\begin{split}
		\mathbf{v}^T_1&=\frac{1}{\sqrt{2-2\cos\theta}}\left( -\frac{\nabla f}{|\nabla f|}+\frac{\nabla g}{|\nabla g|}\right),
		\\
		\mathbf{v}^T_2&=\frac{1}{\sqrt{2+2\cos\theta}} \left(-\frac{\nabla f}{|\nabla f|}-\frac{\nabla g}{|\nabla g|}\right),
	\end{split}
	\label{eq:base_vectors}
\end{equation}
where $\theta$ is the angle between the objective function gradient $\nabla f$ and the constraint function gradient $\nabla g$. With $\theta$ we also have
\begin{equation}
	\begin{split}
		\cos \alpha_1=-<\frac{\nabla f}{|\nabla f|},\mathbf{v}^T_1>=\frac{\sqrt{1-\cos\theta}}{\sqrt{2}},\\ \cos \alpha_2=-<\frac{\nabla f}{|\nabla f|},\mathbf{v}^T_2>=\frac{\sqrt{1+\cos\theta}}{\sqrt{2}}.
	\end{split}
	\label{eq:cos_alphas}
\end{equation}
Inserting (\ref{eq:base_vectors}), (\ref{eq:cos_alphas}) into (\ref{eq:modified_search_direction_svd}) we have
\begin{equation}
	\mathbf{s}_c= -\frac{\nabla f}{|\nabla f|}-\frac{(c-1)}{2} \left(\frac{\nabla f}{|\nabla f|}+\frac{\nabla g}{|\nabla g|}\right).
\end{equation}
As we are mainly interested in the direction of the vector field $\mathbf{s}_c$, we can rewrite it as
\begin{equation}
	\mathbf{s}_\zeta =-\frac{\nabla f}{|\nabla f|}
	-\zeta\frac{\nabla g}{|\nabla g|},
	\label{vector_s}
\end{equation}
with $\zeta = \frac{c-1}{c+1}$. With $c \in [1,+\infty)$ we have $\zeta \in [0, 1)$ and thus we get the present dynamical system (\ref{eq:modified_search_direction}).

\section{Preliminary studies and intuition}
\label{sec:preliminary}
In this section, we demonstrate the present method on two simple examples and conjecture the behavior of the resulting optimization trajectory with intuition.
\subsection{An analytical 2D optimization example}
We first show a 2D optimization problem and solve it analytically. The optimization problem reads,
\begin{equation}
	\begin{split}
		&\textnormal{minimize}~~~	f(x_1,x_2)=\frac{1}{2}(x_1^2 + x_2^2), \\
		&\textnormal{subject to}~~  g(x_1,x_2)= - x_2 + 10 \leq 0. \\
	\end{split}
	\label{eq:example_proof}
\end{equation}

\begin{figure}[h]
	\centering
	\begin{footnotesize}
	\executeiffilenewer{fig/2d_linear_constraint_analysis_zeta.svg}{fig/2d_linear_constraint_analysis_zeta.pdf}%
	{inkscape -z -C --file=fig/2d_linear_constraint_analysis_zeta.svg %
		--export-pdf=fig/2d_linear_constraint_analysis_zeta.pdf --export-latex}%
	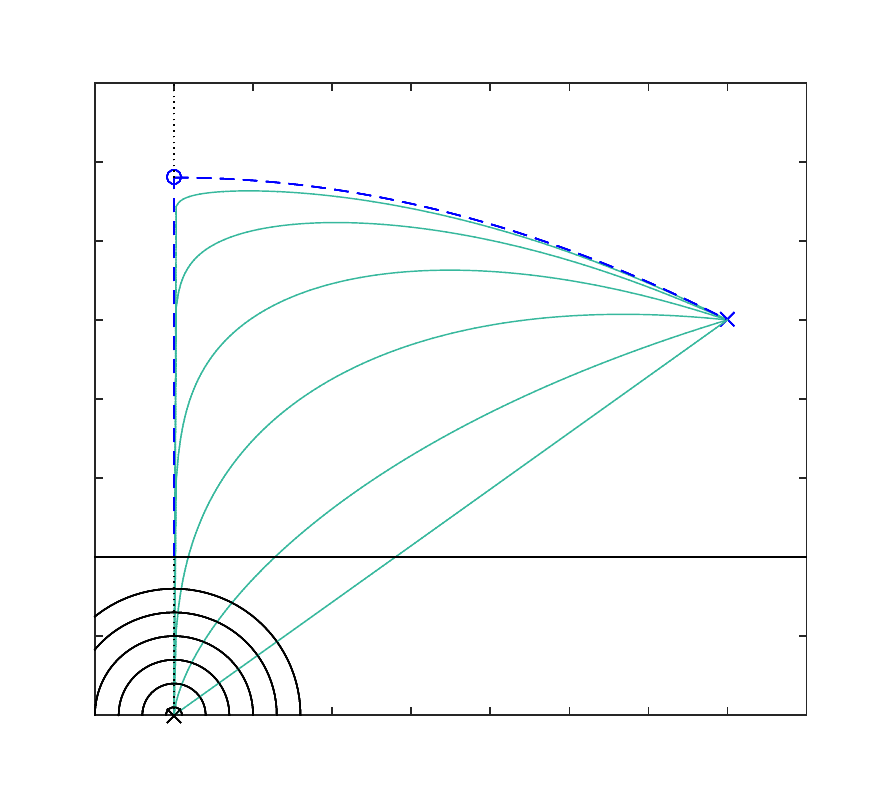%

		\caption{Optimization trajectories for the 2D linear constrained optimization problem \ref{eq:example_proof} with different parameter $\zeta$. The black circles show the objective function contours. The black line shows the constraint function. The dotted line is the central path. As $\zeta \rightarrow 1^{-}$, a part of the trajectory $\Gamma^\zeta$ based on the present search direction field (\ref{vector_s}) converges to the central path.}
		\label{fig:2d_linear_constraint_analysis}
	\end{footnotesize}
\end{figure}

The present search direction field $\mathbf{s}_\zeta$ reads
\begin{equation}
	\mathbf{s}_\zeta = \frac{-1}{\sqrt{x_1^2 + x_2^2}}\left\lbrace  x_1,  x_2 - \zeta \sqrt{x_1^2 + x_2^2} \right\rbrace.
\end{equation}
We define an initial design as $(x_1^0, x_2^0)$. Let $\bar{x}_2 = \frac{1}{2} \left(x_2^0 + \sqrt{(x_1^0)^2 + (x_2^0)^2}\right)$, then, the trajectory $\Gamma^\zeta$ of the present search direction field $\mathbf{s}_\zeta$ is
\begin{equation}
	x_2 + \sqrt{x_1^2 + x_2^2} = 2 \bar{x}_2 \left|\frac{x_1}{x_1^0}\right|^{1- \zeta}.
\end{equation}
Let  $(x_{1,\zeta}, x_{2,\zeta})$ be a point on the trajectory $\Gamma^\zeta$ with a maximal $x_2$ component, then we have
\begin{equation}
	(x_{2,\zeta})^\zeta=\frac{2}{1+\zeta}\frac{\bar{x}_2}{|x_1^0|^{1-\zeta}}
	\left(\frac{\sqrt{1-\zeta^2}}{\zeta}\right)^{1-\zeta},
\end{equation}
and
\begin{equation}
	|x_{1,\zeta}|=\frac{\bar{x}_2}{\zeta}\sqrt{1-\zeta^2}. %\leq \frac{2\bar{x}_2}{\sqrt{c}}\cdot \frac{c+1}{c-1}.
\end{equation}
Let $\zeta \rightarrow 1^{-} $, then, $x_{1,\zeta} \rightarrow 0$, $x_{2,\zeta} \rightarrow \bar{x}_2$, the trajectory $\Gamma^\zeta$ will converge to the curve
$\Gamma$ that is a union of the parabola
\begin{equation}
	x_1^2 =  4 \bar{x}_2^2 - 4 x_2 \bar{x}_2, ~~~~ x_1\in (0, x_1^0) (~~~\mbox{or} (x_1^0,0),)
\end{equation}
and the interval $(0,\bar{x}_2)$ on $x_2$-axis as is shown in figure \ref{fig:2d_linear_constraint_analysis}.
%In figure \ref{fig:2d_linear_constraint_analysis} we show the parabola starting from the initial design $(x_1^0, x_2^0)$ and reaches $(0, \bar{x}_2)$, which lays on the central path illustrated as a dotted line. 
As $\zeta \rightarrow 1^{-} $, a part of the trajectory $\Gamma^\zeta$ converges to the central path. For $\zeta \in [0,1)$, the resulting trajectories are homotopic relative to their endpoints, which are the initial design and the critical point of the objective function.

By $\bar{x}_2 = \frac{1}{2} \left(x_2^0 + \sqrt{(x_1^0)^2 + (x_2^0)^2}\right)$, we have $\bar{x}_2 \geq x_2^0$. This means for any feasible initial design, as $\zeta \rightarrow 1^-$, the resulting optimization trajectory always reaches first a close neighborhood of the central path (at point $(0, \bar{x}_2)$) , where it is at a larger distance to the boundary of the feasible set compared to the initial design. It then follows the central path and reaches the optimal solution.

\subsection{A nonconvex optimization example} We show a second example that includes a nonconvex constraint,
\begin{equation}
	\begin{split}
		&\textnormal{minimize}~~~	f(x_1,x_2)= (x_1 - 2)^2 + (x_2 -2)^2, \\
		&\textnormal{subject to}~~  g(x_1,x_2)= -\frac{1}{10}(x_1 - 3)^2 - x_2 + 3 \leq 0. \\
	\end{split}
	\label{eq:2d_quadrati_obj_nonconvex_constraint}
\end{equation}
In figure \ref{fig:NGopt_non_convex_contours_new}, we plot the optimization trajectory with $\zeta = 0.9999$ together with a few depicted contours of both the objective function and the constraint function. We plot three points A, B, and C on the central path, where the plotted contours of the objective and constraint function are tangent to each other. We choose an initial design $\mathbf{x}^0$ that is close to the point A. It can be observed: instead of heading to the left side of the central path, the optimization trajectory finds its way to the right side. It then follows the central path, but leaves at point C and reaches the other central path. It eventually finds the optimal solution by following this central path. Along the optimization trajectory, the objective function value decreases steadily. The question is now, why does the optimization trajectory choose one side (point B side) of the same central path over another (point A side)? The answer may lie in the difference in the curvatures of the contours of the objective and constraint function between point A and B. We observe the following fact:

Let $\kappa_f$ and $\kappa_g$ be the curvature of the contours of objective and constraint function at the central path, respectively, both with the normal vector $\frac{\nabla f}{|\nabla f|}$, then
%Let $\frac{\nabla f}{|\nabla f|}$ be the normal vector of the contours of objective and constraint function at the central path, $\kappa_f$  be the curvature of the objective function, and $\kappa_g$ be the curvature of the constraint function, then
\begin{itemize}
	\item[-] at point A: $\kappa_f - \kappa_g > 0$;
	\item[-] at point B: $\kappa_f - \kappa_g < 0$.	
\end{itemize}

\begin{figure}[h]
	\centering
	\begin{footnotesize}
	\executeiffilenewer{fig/NGopt_non_convex_contours_red.svg}{fig/NGopt_non_convex_contours_red.pdf}%
	{inkscape -z -C --file=fig/NGopt_non_convex_contours_red.svg %
		--export-pdf=fig/NGopt_non_convex_contours_red.pdf --export-latex}%
	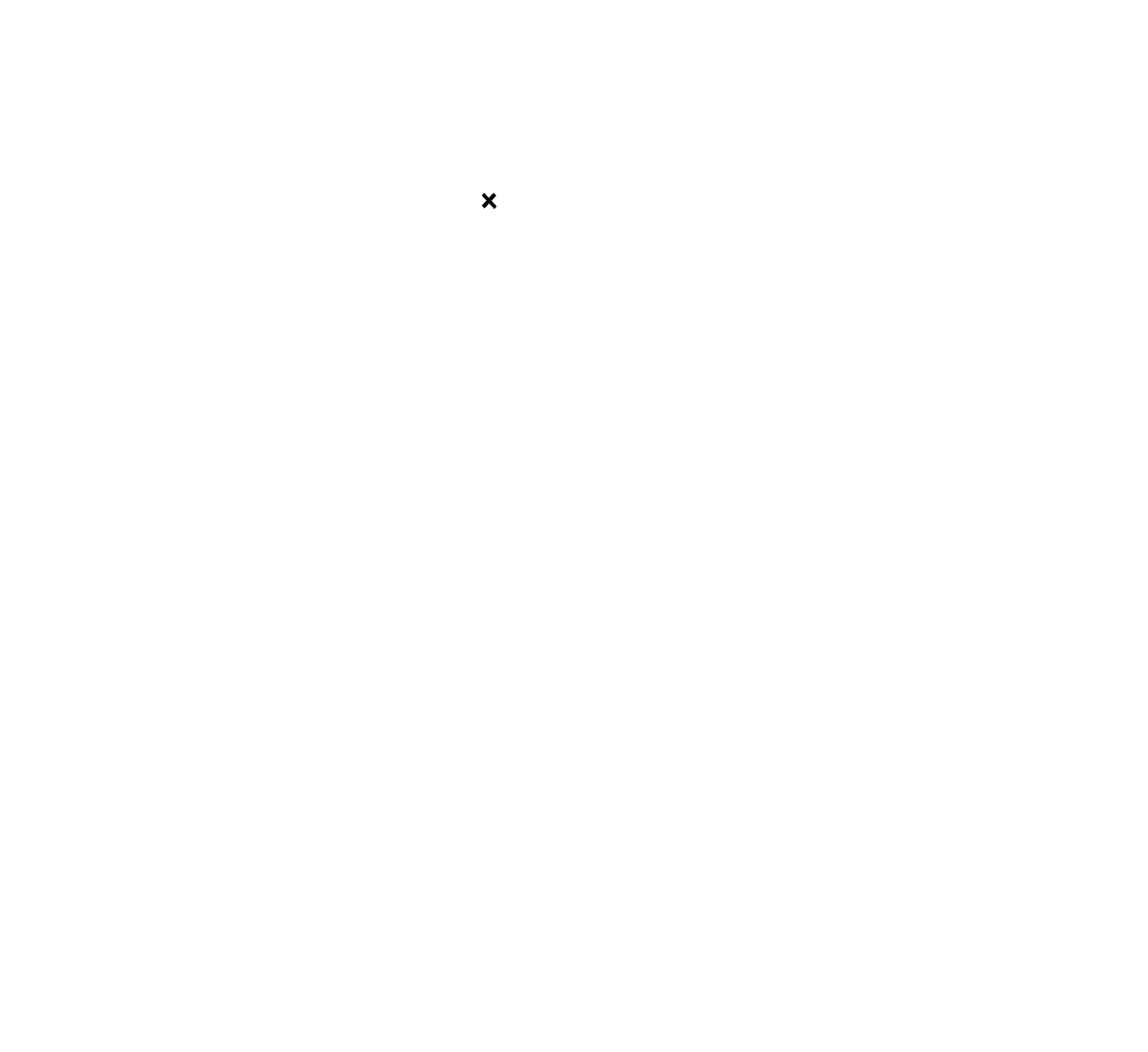%

		\caption{A study on the behavior of the optimization trajectory for problem (\ref{eq:2d_quadrati_obj_nonconvex_constraint}). Red line is the optimization trajectory. Black dashed circles are the contours of the objective function, while the blue dashed curves show the contours of the constraint function. The dotted black lines are the central paths.}
		\label{fig:NGopt_non_convex_contours_new}
	\end{footnotesize}
\end{figure}

Based on this observation, we conjecture the behavior of the optimization trajectory: As $\zeta \rightarrow 1^-$, the optimization trajectory is able to approach and follow a central path, on which the central point satisfies the condition
\begin{equation}
	\kappa_f - \kappa_g < 0. %, \textnormal{with}~ \kappa_f <= 0.
	\label{eq: relative_convex}
\end{equation}
We call condition (\ref{eq: relative_convex}) \textit{the relative convex condition}. This behavior can also be used to explain why the optimization trajectory leaves the central path at point C, where  $\kappa_f = \kappa_g$, and heads towards another central path. In section \ref{sec:local_analysis}, we give a local convergence analysis starting with this conjecture.

%%%%%%%%%%%%%%%%%%%%%%%%%%%%%%%%%%%%%%%%%%%%%%%%%%%%%
\section{Global behavior and convergence under coercive condition}
\label{sec: global_behavior}
We show analysis for the global behavior for the present dynamical system (\ref{eq:modified_search_direction}) under assumptions (A1)$-$(A4). To this purpose, we partly use the geometric analysis in \cite{jost2011riemannian}.
%
%\noindent (A1) Coercive condition,
%
%$$\lim_{x\rightarrow \infty}f(x)=+\infty;$$
%
%\noindent (A2) There is no critical point of the objective function $f$ in the feasible set $\Omega=\{x, g(x)<0\}$;
%
%\noindent (A3) $\nabla g \neq 0$;
%
%\noindent (A4) $f$ and $g$ are twice continuously differentiable functions.

Consider a $\mu-$neighborhood of a central path with $\mu \in [0,1]$,
$$\Theta_\mu=\{x: g(x) \leq 0, \cos\theta<-\mu\}.$$
Obviously,  $\Theta_\mu$ shrinks to the central path as $\mu \rightarrow 1^-.$
%looks like a cone neighborhood of the central path $L=\{x: g(x) < 0, \cos\theta=-1\}$, especially  for objective function $f=|x|^2$ with linear %constraint function $g$.

%\noindent
Recall the present search direction field $\textbf{s}_\zeta$,
\begin{equation*}
	\textbf{s}_\zeta =-\frac{\nabla f}{|\nabla f|}
	-\zeta\frac{\nabla g}{|\nabla g|}.
	\label{vector-s}
\end{equation*}
Let $x(t; \zeta,x_0)$ be the solution of the system:

\begin{equation}\label{system}
	\left\{
	\begin{split}
		&\frac{dx}{dt}= \textbf{s}_\zeta(x),\\
		& x|_{t=0} = x_0.
	\end{split}\right.
	%\label{vector-s}
\end{equation}
with $x_0$ as the initial design in the feasible set and $T_{\zeta,x_0}$ as the maximal existence interval of $x(t; \zeta,x_0)$ in the whole $\mathbb{R}^n$.  We show some properties of  the trajectory  $x(t; \zeta,x_0)$.
\vskip 2mm
%{\bf Lemma 1~}%{boundedness}
\begin{mylemma}
	Let $f$ and $g$ satisfy  assumptions (A1)(A3) and (A4), then
	
	(i) The trajectory $x(t; \zeta,x_0)$ always stays within a bounded domain, i.e.,
	$$x(t; \zeta,x_0)\in \Omega_{f(x_0)}=\{x: f(x) \leq f(x_0)\}.$$
	
	(ii) The constraint function $g$ decreases along the trajectory $x(t; \zeta,x_0)$ out of the cone neighborhood $\Theta_{\zeta}$, and increases in $\Theta_{ \zeta}$.
	\label{lemma1}	
\end{mylemma}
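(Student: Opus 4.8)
The plan is to obtain both statements from a single ingredient: differentiate $f$ and $g$ along the trajectory and rewrite the resulting directional derivatives using the angle $\theta$ between $\nabla f$ and $\nabla g$, so that $\nabla f \cdot \nabla g = |\nabla f|\,|\nabla g|\cos\theta$. Note that wherever $x(t;\zeta,x_0)$ exists the right-hand side $\mathbf{s}_\zeta$ must be defined, hence $\nabla f \neq 0$ along the trajectory (this is precisely why (A2) need not be assumed in this lemma: if the trajectory were to reach a critical point of $f$ it would simply terminate there), while $\nabla g \neq 0$ is guaranteed by (A3). In particular $f$ and $g$ are $C^{1}$ along the trajectory.

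For part (i), I would compute
\begin{equation*}
	\frac{d}{dt}\,f\big(x(t)\big) = \nabla f \cdot \mathbf{s}_\zeta = -|\nabla f| - \zeta\,\frac{\nabla f \cdot \nabla g}{|\nabla g|} = -|\nabla f|\,\big(1+\zeta\cos\theta\big).
\end{equation*}
Since $\cos\theta \geq -1$ and $\zeta \in [0,1)$, we have $1+\zeta\cos\theta \geq 1-\zeta > 0$, so $\tfrac{d}{dt} f(x(t)) \leq -(1-\zeta)\,|\nabla f| \leq 0$. Hence $t \mapsto f(x(t))$ is non-increasing, giving $f(x(t)) \leq f(x_0)$, i.e. $x(t;\zeta,x_0) \in \Omega_{f(x_0)}$. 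By the coercive condition (A1) the sublevel set $\Omega_{f(x_0)} = \{x: f(x)\leq f(x_0)\}$ is bounded, and by continuity (A4) it is closed, hence compact; this is exactly the asserted confinement to a bounded domain.

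For part (ii), the analogous computation is
\begin{equation*}
	\frac{d}{dt}\,g\big(x(t)\big) = \nabla g \cdot \mathbf{s}_\zeta = -\frac{\nabla f \cdot \nabla g}{|\nabla f|} - \zeta\,|\nabla g| = -|\nabla g|\,\big(\cos\theta + \zeta\big).
\end{equation*}
Because $|\nabla g| > 0$ by (A3), the sign of $\tfrac{d}{dt} g(x(t))$ is the opposite of the sign of $\cos\theta + \zeta$. Recalling $\Theta_\zeta = \{x: g(x)\leq 0,\ \cos\theta < -\zeta\}$: inside $\Theta_\zeta$ one has $\cos\theta+\zeta < 0$, so $g$ is strictly increasing; outside $\Theta_\zeta$, where $\cos\theta > -\zeta$, one has $\cos\theta+\zeta > 0$, so $g$ is strictly decreasing; on the cone boundary $\cos\theta = -\zeta$ the derivative vanishes.

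I do not expect a genuine obstacle here — the argument is a direct chain-rule computation. The two points that warrant a word of care are: (a) the well-posedness of the flow, which forces $\nabla f \neq 0$ along the trajectory and makes (A2) superfluous as a hypothesis of this particular lemma; and (b) the essential use of coercivity in part (i) — without (A1) the sublevel set $\Omega_{f(x_0)}$ could be unbounded and the conclusion would fail. It is also worth remarking that part (ii) only pins down the sign of $\dot g$ and does not, by itself, forbid the trajectory from crossing $g=0$; this is consistent with the intended picture in which, inside $\Theta_\zeta$, the trajectory is driven toward (and may reach) the boundary of the feasible set as $\zeta \to 1^{-}$.
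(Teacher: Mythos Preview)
Your proof is correct and follows exactly the same approach as the paper: differentiate $f$ and $g$ along the trajectory to obtain $\tfrac{d}{dt}f = -|\nabla f|(1+\zeta\cos\theta)$ and $\tfrac{d}{dt}g = -|\nabla g|(\zeta+\cos\theta)$, then read off the signs. The paper's own proof is in fact more terse --- it simply records these two formulas and says ``we get a proof directly'' --- so your additional remarks on coercivity, the role of (A2), and the boundary case $\cos\theta=-\zeta$ only add clarity.
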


%\vskip 2mm
\begin{proof}
	~~Based on the deformation of the objective function $f$ and constraint function $g$ along the trajectory $x(t; \zeta,x_0)$:
	
	\begin{equation}\label{deform}
		\left\{
		\begin{split}&
			\frac{d }{dt}f(x(t; \zeta,x_0))= -|\nabla f|(1+\zeta \cos \theta),
			\\ &
			\frac{d }{dt}g(x(t; \zeta,x_0))= -|\nabla g|(\zeta +\cos \theta).
		\end{split}
		\right.
	\end{equation}
	We get a proof directly.
\end{proof}

\begin{mytheorem}
	Suppose that assumptions (A1) - (A4) hold. Then
	
	(i) the trajectory $x(t; \zeta,x_0)$ must go out of the feasible set with $\zeta \in [0,1)$;
	
	(ii) the minimum time in which the trajectory reaches the boundary of the feasible set is at most $\frac{C}{1-\zeta}$ with $C$ independent of $\zeta$.
	\label{theorem1}
\end{mytheorem}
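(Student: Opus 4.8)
The plan is to argue by contradiction, exploiting that along the trajectory the objective strictly decreases at a \emph{uniformly positive} rate which, crucially, does not degenerate as long as $\zeta<1$. If the trajectory stayed in the feasible set $\Omega$ for all time, then by Lemma~\ref{lemma1}(i) it would be confined to $K:=\Omega\cap\Omega_{f(x_0)}$, a compact set: $\Omega_{f(x_0)}$ is bounded by the coercive hypothesis (A1), and both $\Omega$ and $\Omega_{f(x_0)}$ are closed by continuity of $f$ and $g$. But $f$ cannot decrease indefinitely on a compact set, so the trajectory is forced to leave $\Omega$; tracking the decrease quantitatively turns this into the time estimate of part (ii).

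Concretely I would proceed as follows. (1) Observe that on $\Omega$ the field $\textbf{s}_\zeta$ is $C^1$ (by (A2)--(A4) the normalizations $\nabla f/|\nabla f|$ and $\nabla g/|\nabla g|$ are smooth wherever the gradients do not vanish) and has norm at most $1+\zeta$, so (\ref{system}) is well posed and, while the trajectory is in $\Omega$, it remains in the compact set $K$; let $\tau$ be the first time it leaves $\Omega$. (2) From the first line of (\ref{deform}), for $t\in[0,\tau)$,
\[
\tfrac{d}{dt}f\bigl(x(t;\zeta,x_0)\bigr)=-|\nabla f|\,(1+\zeta\cos\theta)\le-(1-\zeta)\,|\nabla f|\le-(1-\zeta)\,m_f,
\]
where $m_f:=\min_{K}|\nabla f|>0$ is strictly positive by (A2) and compactness of $K$. (3) Integrate: $f(x(t;\zeta,x_0))\le f(x_0)-(1-\zeta)m_f\,t$ on $[0,\tau)$; since $f\ge f_{\min}:=\min_{K}f$ on $K$, this forces $t\le(f(x_0)-f_{\min})/((1-\zeta)m_f)$ for every $t<\tau$, hence $\tau\le C/(1-\zeta)$ with $C:=(f(x_0)-f_{\min})/m_f$ independent of $\zeta$ — this proves finiteness of $\tau$, i.e.\ part (i), and the rate in part (ii). (4) Finally verify that $x(\tau;\zeta,x_0)\in\partial\Omega$: it is a limit of points of $K\subseteq\Omega$, hence in $\Omega$, and it cannot lie in the open interior $\Omega_-$, or else the solution would stay inside $\Omega$ past $\tau$, contradicting the definition of $\tau$.

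The main obstacle is step (2): securing a \emph{uniform, $\zeta$-independent} positive lower bound on the descent rate $|\nabla f|(1+\zeta\cos\theta)$. Two ingredients are needed and each uses the hypotheses essentially — the elementary bound $1+\zeta\cos\theta\ge 1-\zeta>0$, which is precisely where $\zeta\in[0,1)$ enters and which collapses at $\zeta=1$ (consistently, at $\zeta=1$ the trajectory is expected to follow the central path and need not leave $\Omega$), together with $|\nabla f|\ge m_f>0$, which relies on (A1) to trap the trajectory in a compact sublevel set and on (A2) to keep $\nabla f$ from vanishing on it. Everything else is routine manipulation of (\ref{deform}); the one point worth making explicit in the write-up is that the resulting constant $C$ does not depend on $\zeta$, which is the quantitative heart of (ii).
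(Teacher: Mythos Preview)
Your proof is correct and rests on the same two pillars as the paper's --- the compactness of $K=\Omega\cap\Omega_{f(x_0)}$ from Lemma~\ref{lemma1} together with (A1), and the key lower bound $1+\zeta\cos\theta\ge 1-\zeta>0$ combined with $|\nabla f|\ge m_f>0$ from (A2). Your treatment of part~(ii) is essentially identical to the paper's.

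The genuine difference is in part~(i). The paper argues separately: assuming the trajectory stays in $\Omega$ forever, it deduces $\int_0^\infty|\nabla f|\,dt<\infty$, then establishes Lipschitz continuity of $t\mapsto|\nabla f(x(t))|$ via the Hessian bound from (A4), and uses this to prove $|\nabla f(x(t))|\to 0$, contradicting the lower bound $|\nabla f|\ge A$. You bypass this entirely by integrating the differential inequality $\tfrac{d}{dt}f\le-(1-\zeta)m_f$ directly, obtaining the finite-time bound first and reading off (i) as an immediate consequence. Your route is shorter and does not invoke the second-order regularity in (A4) at all for this theorem. The paper's Lipschitz argument is not wasted, however: the same technique is reused verbatim in the proofs of Theorem~\ref{theorem2} and Lemma~\ref{lemma2}, where one genuinely needs to conclude that a nonnegative integrable function with bounded derivative tends to zero, and no uniform positive lower bound is available to shortcut the argument.
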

%{\bf Theorem 1.~}

\begin{proof}
	\textcolor[rgb]{0,0,0}{~~We prove (i) by contradiction. Suppose that $x(t; \zeta,x_0)$ stays in the feasible set $\Omega $ for any $t<T_{\zeta, x_0}$, then the vector field $\textbf{s}_{\zeta}$ keeps $C^1$ continuous in a neighborhood of trajectory $x(t; \zeta,x_0)|_{[0,T_{\zeta, x_0})}$. Lemma 1 ensures that }
	\begin{equation}
		|\nabla f(x(t; \zeta,x_0))|\geq A, |f(x(t; \zeta,x_0)|\leq B, ~~~ \forall t<T_{\zeta, x_0},
		\label{lowerbound}
	\end{equation}
	with some positive numbers $A$ and $B$. On the other hand, Picard's existence theorem implies that $T_{\zeta, x_0}=+\infty.$
	The integral of the first formula of (\ref{deform}) shows
	$$
	\int_0^\infty|\nabla f(x(t; \zeta,x_0))|(1+\zeta \cos \theta)dt<\infty.
	$$
	This is to say
	\begin{equation}
		\int_0^\infty|\nabla f(x(t; \zeta,x_0))|dt<\infty.
		\label{finit integral}
	\end{equation}
	
	Notice that
	\begin{equation*}
		\begin{split}
			&\frac{d}{dt}|\nabla f(x(t; \zeta,x_0))|=\nabla |\nabla f(x(t; \zeta,x_0))|\cdot \frac{dx}{dt}\\
			& \hskip 3mm=\nabla |\nabla f(x(t; \zeta,x_0))|\cdot \textbf{s}_\zeta(x)\\
			& \hskip 3mm=\frac{-1}{|\nabla f|}\sum\limits_{k=1,j=1}^{n}\frac{\partial f}{\partial x_k } \frac{\partial^2 f}{\partial x_k \partial x_j} \left(\frac{1}{|\nabla f|}\frac{\partial f}{\partial x_j }+\frac{\zeta}{|\nabla g|}\frac{\partial g}{\partial x_j }\right).
		\end{split}
	\end{equation*}
	\textcolor[rgb]{0,0,0}{ 
		So
		\begin{equation*}
			\left|\frac{d}{dt}|\nabla f(x(t; \zeta,x_0))|\right|\leq 2 \sqrt{\sum\limits_{k=1,j=1}^{n} \left|\frac{\partial^2 f}{\partial x_k \partial x_j} \right|^2}.
	\end{equation*}}
	By assumption (A4) and Lemma \ref{lemma1}, there is a constant $l$ so that
	\begin{equation*}
		\left|\frac{d}{dt}|\nabla f(x(t; \zeta,x_0))|\right|\leq l, ~\forall t.
	\end{equation*}
	Hence, we have a Lipschitz continuity for $|\nabla f(x(t; \zeta,x_0))|$:
	\begin{equation}
		\left||\nabla f(x(t^\prime; \zeta,x_0))|-|\nabla f(x(t^{\prime\prime}; \zeta,x_0))|\right| \leq l |t^\prime-t^{\prime\prime}|,~~~~~ \forall t^\prime,t^{\prime\prime}.
		\label{lipschitz}
	\end{equation}
	
	Now, we claim
	\begin{equation}
		\lim_{t\rightarrow +\infty}|\nabla f(x(t;\zeta,x_0))|=0.
		\label{claim}
	\end{equation}
	%contradicts to (\ref{lowerbound}).
	Otherwise, there is a sequence of $t_j\rightarrow +\infty$ and a positive constant $b$ so that
	\begin{equation*}
		|\nabla f(x(t_j;\zeta,x_0))|\geq b>0.
	\end{equation*}
	Choosing $\delta=\frac{b}{2l}$, then
	\begin{equation*}
		\begin{split}
			&|\nabla f(x(t; \zeta,x_0))|\geq |\nabla f(x(t_j; \zeta,x_0))|\\
			&\hskip 3mm -\left||\nabla f(x(t_j; \zeta,x_0))|-|\nabla f(x(t; \zeta,x_0))|\right| \\
			& \hskip 3mm \geq |\nabla f(x(t_j; \zeta,x_0))|-l|t_j-t|\geq b-\delta l=\frac{b}{2}
		\end{split}
	\end{equation*}
	for any $|t_j-t|\leq \delta.$ Therefore,
	\begin{equation*}
		\begin{split}
			&\int_0^\infty|\nabla f(x(t; \zeta,x_0))|dt\geq \sum\limits_{j=1}^{\infty} \int_{t_j-\delta}^{t_j+\delta}|\nabla f(x(t; \zeta,x_0))|dt\\
			&\hskip 3mm \geq \sum\limits_{j=1}^{\infty} \int_{t_j-\delta}^{t_j+\delta}\frac{b}{2}dt=\sum\limits_{j=1}^{\infty} \delta b=+\infty.
		\end{split}
	\end{equation*}
	\textcolor[rgb]{0,0,0}{ This is a contradiction to (\ref{finit integral}). Hence 
		\begin{equation*}
			\lim_{t\rightarrow +\infty}|\nabla f(x(t;\zeta,x_0))|=0.
			\label{claim}
		\end{equation*}
		Notice that this convergence contradicts (\ref{lowerbound}).
		This completes the proof for (i).}
	\vskip 2mm
	
	To prove (ii),  let $T_{\zeta, x_0}^\sharp$ be the time in which the trajectory first reaches the  boundary of feasible set, then (\ref{lowerbound}) holds for $0<t<T_{\zeta, x_0}^\sharp$. So
	$$
	\int_0^{T_{\zeta, x_0}^\sharp}|\nabla f|(1+\zeta \cos \theta)dt=f(x_0)-f(x(T_{\zeta, x_0}^\sharp; \zeta,x_0))\leq 2B.
	$$
	Hence
	$$
	T_{\zeta, x_0}^\sharp A(1-\zeta)\leq 2B,
	$$
	which implies
	$$
	T_{\zeta, x_0}^\sharp\leq \frac{2B}{A(1-\zeta)},
	$$
	so that $T_{\zeta, x_0}^\sharp \leq \frac{C}{1-\zeta}$ holds. This ends the proof.
\end{proof}

\begin{mytheorem}
	Suppose that assumptions (A1),(A3), (A4) hold and $\nabla g\neq 0$ in the whole $\mathbb{R}^n$. The trajectory $x(t; \zeta,x_0)$ converges to a connected subset of  critical points of the objective function by $\zeta \in [0,1)$. Especially, if the critical points of objective function is isolated, then
	\begin{equation}\label{convergencetocritical}
		\lim_{t\rightarrow T_{\zeta,x_0}^-}x(t; \zeta,x_0)=x_c,~~~~~~~~~~ \forall \zeta \in [0,1).
	\end{equation}
	\label{theorem2}
\end{mytheorem}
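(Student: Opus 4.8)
The plan is to view the trajectory as a bounded orbit of a vector field that is $C^1$ everywhere except on the critical set of $f$, and to force its limit set (as $t\to T_{\zeta,x_0}^-$) onto that critical set. Write $T:=T_{\zeta,x_0}$. First I would record the standing facts. By Lemma \ref{lemma1}(i) the orbit stays in $K:=\Omega_{f(x_0)}=\{x:f(x)\le f(x_0)\}$, which is compact by coercivity (A1) and continuity (A4). Since $\nabla g\ne 0$ on all of $\mathbb{R}^n$ and $f,g\in C^2$ (A4), the field $\mathbf{s}_\zeta$ is $C^1$ on the open set $U:=\{x:\nabla f(x)\ne 0\}$, which is the natural domain of the flow; moreover $|\mathbf{s}_\zeta|\le 1+\zeta\le 2$ everywhere it is defined. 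From the first line of (\ref{deform}), $\frac{d}{dt}f(x(t))=-|\nabla f|(1+\zeta\cos\theta)\le-(1-\zeta)|\nabla f|\le 0$, so $f(x(t))$ is nonincreasing and bounded below on $K$, hence $f(x(t))\to f_\infty$ and $(1-\zeta)\int_0^{T}|\nabla f(x(t))|\,dt\le f(x_0)-f_\infty<\infty$.

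Next I would introduce the limit set $\omega:=\bigcap_{0\le s<T}\overline{\{x(t):s\le t<T\}}$. Each set in this intersection is a closed subset of the compact $K$ (hence compact) and is connected (closure of the continuous image of an interval), and the family is nested, so $\omega$ is nonempty, compact and connected, and $\mathrm{dist}(x(t),\omega)\to 0$ as $t\to T^-$. The crux is to show $\omega\subseteq\{\nabla f=0\}$, and this is where I expect the main obstacle, since the argument splits on whether $T$ is infinite or finite. If $T=\infty$, I reuse verbatim the estimates from the proof of Theorem \ref{theorem1}: the bound $\int_0^\infty|\nabla f(x(t))|\,dt<\infty$ together with the Lipschitz continuity of $t\mapsto|\nabla f(x(t))|$ (from (A4) and boundedness of the orbit) forces $|\nabla f(x(t))|\to 0$, so every limit point is critical. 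If $T<\infty$, I argue by contradiction: a limit point $p$ with $\nabla f(p)\ne 0$ has a closed ball $\bar B(p,r)\subset U$ on which $\mathbf{s}_\zeta$ is Lipschitz and $|\nabla f|$ is bounded away from $0$; because $|\mathbf{s}_\zeta|\le 2$, an orbit that enters $B(p,r/2)$ needs time at least $r/4$ to leave $\bar B(p,r)$, so once some $x(t_j)\to p$ with $t_j\uparrow T$ satisfies both $x(t_j)\in B(p,r/2)$ and $T-t_j<r/4$, the entire tail $x([t_j,T))$ is confined to $\bar B(p,r)$; then $\lim_{t\to T^-}x(t)$ exists in $U$ and Picard's theorem extends the solution past $T$, contradicting maximality. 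Hence $\omega$ consists of critical points of $f$.

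Finally the conclusions follow: $\omega$ is a nonempty, compact, connected subset of the critical points of $f$, contained in $K$, and $\mathrm{dist}(x(t),\omega)\to 0$; this is the first assertion. When the critical points of $f$ are isolated, $\{\nabla f=0\}$ is discrete, so the connected set $\omega$ is a single point $x_c$, and $\mathrm{dist}(x(t),\omega)\to 0$ becomes $\lim_{t\to T_{\zeta,x_0}^-}x(t)=x_c$, i.e. (\ref{convergencetocritical}). The only delicate points are the bookkeeping that $\mathbf{s}_\zeta$ is only $C^1$ off the critical set — so convergence occurs precisely as the orbit reaches where the field degenerates — and the finite-versus-infinite-time dichotomy in the previous paragraph; everything else is the compactness–monotonicity template already used for Theorem \ref{theorem1}.
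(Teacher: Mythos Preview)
Your argument is correct, but it takes a different route from the paper's. The paper avoids the finite/infinite time dichotomy altogether by reparameterising time: it introduces the auxiliary system $\frac{dy}{d\tau}=|\nabla f|\,\mathbf{s}_\zeta$, whose right-hand side extends to a $C^1$ field on all of $\mathbb{R}^n$ (it vanishes on $\{\nabla f=0\}$) and therefore has infinite existence interval by Picard. The orbit is the same, only the clock changes via $t=\int_0^\tau |\nabla f|\,d\tau'$. One then integrates $\frac{d}{d\tau}f(y)=-|\nabla f|^2(1+\zeta\cos\theta)$ over $[0,\infty)$ and reuses the Lipschitz argument of Theorem~\ref{theorem1} to force $|\nabla f(y(\tau))|\to 0$. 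Your approach, by contrast, stays in the original time variable and splits on whether $T$ is finite: when $T=\infty$ you recycle the Theorem~\ref{theorem1} estimates verbatim, and when $T<\infty$ you use the bound $|\mathbf{s}_\zeta|\le 2$ to show the orbit is uniformly Lipschitz, hence has a limit as $t\to T^-$, which must be a critical point or else Picard would extend the solution past $T$. Your $\omega$-limit set formalism also makes the connectedness claim explicit, which the paper leaves as a standard fact. The reparameterisation trick is slicker and unifies both cases in one stroke; your case split is more elementary and self-contained. Either way the conclusion follows.
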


\begin{proof}
	~~First notice that the system (\ref{system}) is not well defined  when $\nabla f=0$. The trajectory $x(t; \zeta,x_0)$ will terminate at these points
	upon finding them. Therefore, the maximal interval $T_{\zeta,x_0}$ may be finite. To overcome this difficulty, we use an equivalent Y$-$system:
	
	\begin{equation}
		\left\{
		\begin{split}
			&\frac{dy}{d\tau}=|\nabla f| \textbf{s}_\zeta(x),\\
			& y|_{\tau=0} = x_0.
		\end{split}
		\right.
		\label{Y-system}
	\end{equation}
	The system (\ref{Y-system}) has the same orbit as (\ref{system}) but different parameterization. It has the solution $y(\tau; \zeta,x_0)$ with infinite existence interval  in the whole $\mathbb{R}^n$ by Picard's existence theorem.
	
	For $ \zeta \in [0,1) $, consider an integral along $y(\tau; \zeta, x_0)$, \begin{equation*}
		f(y(T; \zeta,x_0))= f(x_0)-\int_0^{T}|\nabla f|^2(1 +\zeta\cos \theta)(y(\tau; \zeta,x_0))d\tau.
	\end{equation*}
	Lemma \ref{lemma1} ensures
	\begin{equation*}
		\int_0^{T}|\nabla f|^2(1 +\zeta\cos \theta)(y(\tau; \zeta,x_0))d\tau= f(x_0)-f(y(T; \zeta,x_0))\leq M \end{equation*} with some positive number $M$ independent of $T$. Hence
	
	\begin{equation*}
		\int_0^{\infty}|\nabla f|^2(1 +\zeta\cos \theta)(y(\tau; \zeta,x_0))d\tau\leq M.
	\end{equation*}
	\textcolor[rgb]{0,0,0} {With $1+\zeta\cos \theta\geq 1-\zeta$, a similar method to the proof for Theorem \ref{theorem1}(i) shows that}
	\begin{equation*}
		\lim_{\tau\rightarrow +\infty}|\nabla f|^2=0.
	\end{equation*}
	This proves that $y(\tau, \zeta, x_0)$ approaches the connected subset of the critical points. An isolated condition  makes sure that
	\begin{equation*}
		\lim_{\tau\rightarrow +\infty}y(\tau; \zeta, x_0)=x_c,
	\end{equation*}
	for some critical point $x_c$ of the objective function.
	Notice that along the trajectory,
	\begin{equation*}
		t=\int_0^{\tau}|\nabla f|d \tau,
	\end{equation*}
	hence
	\begin{equation*}
		\lim_{t\rightarrow T_{\zeta,x_0}^-}x(t; \zeta,x_0)=x_c,~~~~~~~~~~ \forall \zeta \in [0,1).
	\end{equation*}
\end{proof}

\begin{myremark}
	The global behavior of the system (\ref{system}) looks like a gradient flow. Obviously, as $\zeta = 0$, the system reduces to the normalized and negative gradient flow of the objective function. Especially, the present trajectory is homotopic with the gradient descent trajectory for $\zeta \in (0,1)$ by Theorem \ref{theorem2} and the continuous dependence of the solutions to differential equations on parameters.
	\label{remark1}
\end{myremark}

\begin{myremark}
	The analytical trajectory may find a critical point of constraint $g$ without condition (A3). In practical implementations, we suggest using the gradient descent $-\nabla f$ to escape these critical points.
\end{myremark}
\vskip 2mm

\begin{mylemma}
	Under assumptions (A1)$-$(A4), the trajectory $x(t; \zeta=1, x_0)$ converges, as $t\rightarrow+\infty$, into any $\mu-$neighborhood $\Theta_\mu$ of the central path $L$ with $\mu \in [0,1)$, given that the initial design $x_0 \in \Omega$.
	\label{lemma2}
\end{mylemma}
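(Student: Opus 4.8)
The plan is to run, at $\zeta=1$, the same two-step scheme used for Theorems \ref{theorem1} and \ref{theorem2}: (a) confine the trajectory to a compact set on which the vector field is smooth and nondegenerate, so that it exists for all $t\ge 0$; and (b) show that $\cos\theta\big(x(t;1,x_0)\big)\to -1$ by an integrability-plus-uniform-continuity (Barbalat-type) argument, which is exactly the statement $x(t;1,x_0)$ enters and stays in every $\Theta_\mu$. The engine is the specialization of the deformation identities (\ref{deform}) to $\zeta=1$,
\[
	\frac{d}{dt} f\big(x(t;1,x_0)\big) = -|\nabla f|\,(1+\cos\theta), \qquad
	\frac{d}{dt} g\big(x(t;1,x_0)\big) = -|\nabla g|\,(1+\cos\theta),
\]
in which $1+\cos\theta\ge 0$ forces \emph{both} $f$ and $g$ to be nonincreasing along the trajectory.

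First I would establish global existence. Since $f$ is nonincreasing, the trajectory stays in the sublevel set $\Omega_{f(x_0)}=\{f\le f(x_0)\}$, which is bounded by the coercivity assumption (A1); since $g$ is nonincreasing and $x_0\in\Omega$, the trajectory stays in $\Omega$ (indeed it only becomes "more feasible", $g(x(t))\le g(x_0)\le 0$). Hence it remains in the compact set $K:=\Omega_{f(x_0)}\cap\Omega$. On $K$, assumptions (A2) and (A3) give $|\nabla f|$ and $|\nabla g|$ bounded below by positive constants, so by (A4) the normalized fields $\nabla f/|\nabla f|$ and $\nabla g/|\nabla g|$, hence $\mathbf{s}_{1}$, are $C^1$ on $K$; Picard's theorem together with the standard continuation argument then yields a solution defined for all $t\in[0,+\infty)$. (If the trajectory ever reaches a point with $\cos\theta=-1$, i.e.\ $\mathbf{s}_{1}=0$, it is stationary there and already lies on the central path, so the conclusion is trivial; assume henceforth this does not happen.)

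Next I would derive the key estimate and upgrade it to pointwise decay. Integrating the first identity above and using that $f$ is bounded below on the compact $K$ gives $\int_0^{\infty}|\nabla f|\,(1+\cos\theta)(x(t))\,dt\le f(x_0)-\inf_K f<\infty$; since $|\nabla f|$ is bounded below by a positive constant on $K$, this yields
\[
	\int_0^{\infty}\big(1+\cos\theta\big)\big(x(t;1,x_0)\big)\,dt<\infty .
\]
Now $x\mapsto\cos\theta(x)=\langle\nabla f,\nabla g\rangle/(|\nabla f|\,|\nabla g|)$ is $C^1$ on $K$, hence has bounded gradient there, and $|\dot x|=|\mathbf{s}_{1}|\le 2$, so $t\mapsto 1+\cos\theta(x(t))$ is globally Lipschitz on $[0,\infty)$. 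A nonnegative Lipschitz function with finite integral over $[0,\infty)$ must tend to $0$ — this is precisely the contradiction argument from the proof of Theorem \ref{theorem1}(i): if $1+\cos\theta\ge b>0$ along some $t_j\to+\infty$, then $1+\cos\theta\ge b/2$ on fixed-length intervals around the $t_j$, making the integral diverge. Therefore $\cos\theta(x(t))\to -1$, and combined with $g(x(t))\le g(x_0)\le 0$, for every $\mu\in[0,1)$ there is $T_\mu$ with $x(t;1,x_0)\in\Theta_\mu$ for all $t\ge T_\mu$.

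I expect the main obstacle to be the global-existence/nondegeneracy step rather than the asymptotics: one must rule out the trajectory running into the boundary $\partial\Omega$ (or to infinity) in finite time in a way that breaks smoothness or the nonvanishing of the gradients. This is exactly where monotonicity of $g$ — available at $\zeta=1$ but not at $\zeta<1$ — together with assumption (A3) is needed, and why the whole analysis is carried out inside the compact set $K$; once that is secured, the remaining steps are a near-verbatim replay of the integrability arguments already used for Theorems \ref{theorem1} and \ref{theorem2}.
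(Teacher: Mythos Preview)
Your proposal is correct and follows essentially the same scheme as the paper's proof: global existence via confinement to a compact subset of the feasible region, then an integrability estimate combined with a Lipschitz/Barbalat argument (the paper explicitly invokes ``the same procedure as in the proof for Theorem~\ref{theorem1}'') to force $1+\cos\theta\to 0$. The only cosmetic difference is that the paper integrates the $g$-deformation identity and uses (A3) to strip off $|\nabla g|$, whereas you integrate the $f$-identity and use (A2) to strip off $|\nabla f|$; at $\zeta=1$ the two identities are symmetric, so this is not a substantive distinction.
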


\begin{proof}
	~~Along the trajectory $x(t; 1,x_0)$, we derive the constraint function $g$ with respect to $t$ and have
	\begin{equation}\label{zeta1}
		\frac{d }{dt}g(x(t; 1,x_0))= -|\nabla g|(1 +\cos \theta)\leq 0.
	\end{equation}
	By Picard's existence theorem, the vector field $\textbf{s}_1$ keeps $C^1$ continuity in a neighborhood of the trajectory $x(t; 1,x_0)$ and $T_{1,x_0}=+\infty$.
	%(a remark on $T_{1,x_0}=+\infty$ is given at the end of this section).
	Integrating (\ref{zeta1}) gives
	%$$\int_0^{+\infty}|\nabla g|(1 +\cos \theta)(x(t; 1,x_0))dt<\infty.$$
	\begin{equation}
		\int_0^{+\infty}|\nabla g|(1 +\cos \theta)dt = g(x(T_{1, x_0}; 1, x_0)) - g(x(0; 1, x_0)) <+\infty.
	\end{equation}
	Based on Lemma \ref{lemma1} and the assumption (A4), we know that the integral function $|\nabla g|(1 +\cos \theta)$ is continuously differentiable with respect to $t$. Same procedure as in the proof for Theorem \ref{theorem1} shows that
	%$$\lim_{t\rightarrow +\infty}(1 +\cos \theta)(x(t; 1,x_0))=0,$$
	\begin{equation}
		\lim_{t\rightarrow +\infty}|\nabla g|(1 +\cos \theta)=0,
		\label{eq:continuous_argument}
	\end{equation}
	resulting with (A3) that $(1 + \cos \theta) \rightarrow  0$ as $t \rightarrow + \infty$. In other word, trajectory $x(t; 1,x_0)$ comes into any $\mu-$neighborhood $\Theta_\mu$ of the central path $L$, $\mu \in [0,1)$.
\end{proof}

\begin{mytheorem}
	Under assumptions (A1)$-$(A4),
	the trajectory $x(t;1,x_0)$ converges to a point $\widehat{x}$ on central path $L$ generally, i.e.,
	
	\begin{equation}%\label{convergencetocritical}
		\lim_{t\rightarrow +\infty}x(t; 1,x_0)=\widehat{x}\in L,
	\end{equation}
	provided $x_0 \in \Omega$.
	\label{theorem3}
\end{mytheorem}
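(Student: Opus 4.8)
The plan is to analyze the $\omega$-limit set $\omega(x_0)$ of the trajectory $x(t;1,x_0)$ and to show that it reduces to a single point. I would first assemble the compactness ingredients. Differentiating $f$ along the trajectory (equation~\eqref{deform} specialized to $\zeta=1$) gives $\tfrac{d}{dt}f=-|\nabla f|(1+\cos\theta)\le 0$, so the trajectory stays in the sublevel set $\Omega_{f(x_0)}$, which is bounded by the coercive condition (A1) and closed by (A4), hence compact; and \eqref{zeta1} together with $g(x_0)\le 0$ shows $g$ stays $\le 0$, so the trajectory remains feasible. Thus $x(t;1,x_0)$ lies in the compact set $K:=\Omega_{f(x_0)}\cap\Omega$, on which (A2)--(A3) give $|\nabla f|\ge A>0$ and $|\nabla g|\ge A'>0$; so $\mathbf{s}_1$ is a $C^1$ vector field on a neighbourhood of $K$ and, as already used in the proof of Lemma~\ref{lemma2}, Picard's theorem yields $T_{1,x_0}=+\infty$. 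Consequently $\omega(x_0)$ is nonempty, compact and connected.

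Next I would locate $\omega(x_0)$. Since $f(x(t;1,x_0))$ is non-increasing and bounded below on the compact set $K$, it converges to some value $f_\infty$, so $f\equiv f_\infty$ on $\omega(x_0)$ by continuity. By Lemma~\ref{lemma2} we have $(1+\cos\theta)\to 0$ along the trajectory, so every point of $\omega(x_0)$ satisfies $\cos\theta=-1$, i.e. $\tfrac{\nabla f}{|\nabla f|}+\tfrac{\nabla g}{|\nabla g|}=0$, which is the normalized central path condition \eqref{eq:normalized_central_path_condition_single_constraint}; hence $\omega(x_0)\subseteq L$. Moreover at any such point $\mathbf{s}_1=-\tfrac{\nabla f}{|\nabla f|}-\tfrac{\nabla g}{|\nabla g|}=0$, so $\omega(x_0)$ consists entirely of equilibria of the system \eqref{system} lying on the central path.

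Finally I would conclude by a connectedness argument. The central path $L$ is generically a one-dimensional curve, and $\omega(x_0)$ is a connected compact subset of $L$ on which $f\equiv f_\infty$. If $\omega(x_0)$ contained two distinct points, it would contain a nondegenerate sub-arc of $L$ along which $f$ is constant; ruling out this degenerate configuration forces $\omega(x_0)=\{\widehat{x}\}$ for a single $\widehat{x}\in L$, and since the trajectory is bounded with singleton $\omega$-limit set it converges to $\widehat{x}$, which is the assertion. The exclusion of the degenerate configuration is automatic, for instance, when $f$ is strictly convex (then $f$ is strictly monotone along $L$), and more generally when the level sets of $f$ meet $L$ transversally --- which I read as the content of the qualifier ``generally''.

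I expect this last step to be the main obstacle. Because $f$ and $g$ are assumed only $C^2$, one cannot invoke a \L ojasiewicz gradient inequality to prevent the trajectory from drifting indefinitely along an arc of equilibria on the central path, so a fully unconditional ``convergence to a point'' statement would require an extra hypothesis (analyticity of the data, strict convexity of $f$, or transversality of the level sets of $f$ to $L$); I would either state the theorem under one such hypothesis or keep the generic formulation as written. The remaining ingredients --- compactness, monotonicity of $f$ and of $g$ along the flow, and Lemma~\ref{lemma2} --- are routine.
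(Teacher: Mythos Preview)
Your argument is correct and follows essentially the same route as the paper: both show that every accumulation point of $x(t;1,x_0)$ lies on $L$ (via Lemma~\ref{lemma2}) and that a scalar quantity is constant on the accumulation set, then invoke a generic transversality condition to conclude the limit set is a single point. The only minor difference is in bookkeeping: the paper also records that $g(x(t;1,x_0))$ converges (by integrating \eqref{zeta1}) and phrases the genericity condition as transversal intersection of the level sets of $g$ with $L$ (citing Sard's theorem in a footnote), whereas you use only the constancy of $f$ on $\omega(x_0)$ together with connectedness of the $\omega$-limit set; either scalar suffices, and your $\omega$-limit-set formulation makes the role of connectedness more explicit than the paper's subsequence argument.
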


\begin{proof}
	~~According to (\ref{eq:continuous_argument}) we have
	$$
	\lim_{t\rightarrow +\infty}\cos\theta=-1.
	$$
	Choose a sequence $t_j\rightarrow +\infty$ as $j\rightarrow +\infty$ with
	\begin{equation}
		\lim_{j\rightarrow +\infty}x(t_j; 1,x_0)=\widehat{x},
		\label{sequence}
	\end{equation}
	for some point $\widehat{x}$. Hence
	$$
	\cos\theta|_{\widehat{x}}=-1,
	$$
	and
	\begin{equation}
		\left\{
		\begin{split}
			& f(\widehat{x})= f(x_0)-\int_0^{+\infty}|\nabla f|(1 +\cos \theta)(x(t; 1,x_0))dt, \\
			& g(\widehat{x})= g(x_0)-\int_0^{+\infty}|\nabla g|(1 +\cos \theta)(x(t; 1,x_0))dt.\\
		\end{split}
		\right.
		\label{eq:property_central_point}
	\end{equation}
	This says that the point $\widehat{x}\in L$, and the values $f(\widehat{x}), g(\widehat{x})$ depend only on $x_0$. In general, the point with these conditions is unique\footnote{If the central path and the contour of the constraint function $g$ intersect transversally, then the point $\widehat{x}$ is unique. By Sard's theorem,  transversal intersection occurs with the probability of 1.}. Therefore
	\begin{equation}
		\lim_{t\rightarrow +\infty}x(t; 1,x_0)=\widehat{x}\in L.
		\label{eq:converge_to_central_point}
	\end{equation}
\end{proof}

\begin{mytheorem}
	Under assumptions (A1)$-$(A4), the trajectory  $x(t; \zeta,x_0)$ is convergent to the trajectory  $x(t; 1, x_0)$ uniformly for $t\in (0, \alpha\log \frac{1}{1-\zeta})$ for some $\alpha>0$.
	\label{theorem4}
\end{mytheorem}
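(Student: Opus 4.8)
The plan is to treat this as a standard ODE perturbation (Gronwall) estimate. The two curves are integral curves of $\mathbf{s}_\zeta$ and $\mathbf{s}_1$, and these fields differ only by $\mathbf{s}_\zeta-\mathbf{s}_1=(1-\zeta)\,\nabla g/|\nabla g|$, a perturbation of norm exactly $1-\zeta$; Gronwall then controls the separation of the trajectories by a bound of the form $(1-\zeta)e^{Lt}$, and the window $t\in(0,\alpha\log\frac{1}{1-\zeta})$ is precisely the range on which such a bound is $o(1)$ as $\zeta\to 1^-$. First I would fix a compact domain carrying all the constants. By Lemma~\ref{lemma1}(i) (whose proof only uses $1+\zeta\cos\theta\ge 0$ and hence is valid at $\zeta=1$ as well) together with $\frac{d}{dt}g(x(t;1,x_0))=-|\nabla g|(1+\cos\theta)\le 0$, the limiting trajectory stays in $K_0=\Omega\cap\{f\le f(x_0)\}$ for all $t\ge 0$, and it is globally defined by Lemma~\ref{lemma2}. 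Since $\nabla f\neq 0$ and $\nabla g\neq 0$ on $\Omega$ by (A2)--(A3) and both are continuous by (A4), a closed neighborhood $K_\rho=\{x:\mathrm{dist}(x,K_0)\le\rho\}$ still avoids the zero sets of $\nabla f,\nabla g$ for $\rho$ small; on the compact set $K_\rho$ the norms $|\nabla f|,|\nabla g|$ are bounded below by positive constants and $\mathbf{s}_\zeta\in C^1(K_\rho)$ with $\|\nabla_x\mathbf{s}_\zeta\|\le L$ for a constant $L$ \emph{independent of} $\zeta\in[0,1]$ (because $\mathbf{s}_\zeta$ is affine in $\zeta$ with $|\zeta|\le 1$).

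Next I would run the comparison. Set $\phi(t)=|x(t;\zeta,x_0)-x(t;1,x_0)|$. As long as $x(s;\zeta,x_0)\in K_\rho$ for $s\le t$, splitting $\mathbf{s}_\zeta(x(t;\zeta,x_0))-\mathbf{s}_1(x(t;1,x_0))$ into a Lipschitz part and the $\zeta$-perturbation gives $\dot\phi(t)\le L\,\phi(t)+(1-\zeta)$ with $\phi(0)=0$, so Gronwall's inequality yields
\[
\phi(t)\le\frac{1-\zeta}{L}\bigl(e^{Lt}-1\bigr)\le\frac{1-\zeta}{L}\,e^{Lt}.
\]
Choosing any $\alpha$ with $0<\alpha<1/L$, for $t\le\alpha\log\frac{1}{1-\zeta}$ one has $e^{Lt}\le(1-\zeta)^{-L\alpha}$ and therefore $\phi(t)\le\frac1L(1-\zeta)^{1-L\alpha}$, which tends to $0$ as $\zeta\to 1^-$, uniformly in $t$ over that interval.

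It then remains to justify that $x(t;\zeta,x_0)$ really stays inside $K_\rho$ for $t\le\alpha\log\frac{1}{1-\zeta}$, which I would do by a continuation (bootstrap) argument. Let $t^\star$ be the supremum of times $t\le\alpha\log\frac{1}{1-\zeta}$ with $x(s;\zeta,x_0)\in K_\rho$ on $[0,t]$. On $[0,t^\star)$ the estimate above applies and gives $\phi(t)\le\frac1L(1-\zeta)^{1-L\alpha}<\rho$ once $\zeta$ is close enough to $1$; since $x(t;1,x_0)\in K_0$, the point $x(t;\zeta,x_0)$ then lies in the interior of $K_\rho$, so $t^\star$ cannot be a time at which the trajectory reaches $\partial K_\rho$, forcing $t^\star=\alpha\log\frac{1}{1-\zeta}$. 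Hence the estimate, and the claimed uniform convergence, holds on the full window.

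The main obstacle is exactly this domain issue. A priori the trajectory $x(t;\zeta,x_0)$ leaves the feasible set in finite time (Theorem~\ref{theorem1}), and wherever $\nabla f$ or $\nabla g$ vanishes the field $\mathbf{s}_\zeta$ is not even defined, so no global Lipschitz constant is available. The resolution is that Theorem~\ref{theorem1}(ii) only makes the exit time $\mathcal{O}(1/(1-\zeta))$, which is far longer than the $\mathcal{O}(\log\frac{1}{1-\zeta})$ window; on this short window the Gronwall bound is itself $\mathcal{O}((1-\zeta)^{1-L\alpha})$, small enough to trap $x(t;\zeta,x_0)$ in the fixed compact neighborhood $K_\rho$ of the limiting curve, where all the constants ($L$ and the lower bounds on $|\nabla f|,|\nabla g|$) are under control. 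One must also be careful to take $L$, and hence $\alpha<1/L$, independent of $\zeta$, which is immediate from the affine dependence of $\mathbf{s}_\zeta$ on $\zeta$; the resulting $\alpha$ is not sharp, but no sharpening is needed for the statement.
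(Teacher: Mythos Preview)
Your argument is correct and follows essentially the same route as the paper: split $\mathbf{s}_\zeta-\mathbf{s}_1=(1-\zeta)\nabla g/|\nabla g|$, use Lipschitz bounds on the normalized gradients over a compact set, apply Gronwall, and choose $\alpha$ so that $(1-\zeta)e^{Lt}\to 0$ on the window $t\le\alpha\log\frac{1}{1-\zeta}$. The paper works with the integral inequality $\psi_\zeta(T)\le 2M\int_0^T\psi_\zeta\,dt+(1-\zeta)T$ and arrives at the bound $\psi_\zeta(T)\le -\alpha(1-\zeta)^{1-2M\alpha}\log(1-\zeta)$ with $\alpha<1/(2M)$, but it simply asserts the Lipschitz constant on $\{f\le f(x_0),\,g\le 0\}$ without checking that $x(t;\zeta,x_0)$ stays there; your bootstrap with the $\rho$-neighborhood $K_\rho$ is a genuine (and needed) refinement of that step.
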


\begin{proof}
	~~By system (\ref{system}), we have the integral equation:
	\begin{equation*}%\label{system}
		x(T;\zeta,x_0)=x_0+\int_0^T \textbf{s}_\zeta(x(t;\zeta,x_0)dt.
	\end{equation*}
	So
	\begin{equation}
		\begin{split}
			& x(T;1,x_0)-x(T;\zeta,x_0)= \int_0^T \left[\textbf{s}_1(x(t;1,x_0)- \textbf{s}_\zeta(x(t;\zeta,x_0)\right]dt \\
			&\hskip 15mm = -\int_0^T \left[\frac{\nabla f}{|\nabla f|}(x(t;1,x_0))-\frac{\nabla f}{|\nabla f|}(x(t;\zeta,x_0))\right]dt\\
			&\hskip 19mm-\zeta\int_0^T \left[\frac{\nabla g}{|\nabla g|}(x(t;1,x_0))-\frac{\nabla g}{|\nabla g|}(x(t;\zeta,x_0))\right]dt\\
			&\hskip 19mm-(1-\zeta)\int_0^T \left[\frac{\nabla g}{|\nabla g|}(x(t;1,x_0))\right]dt.
		\end{split}
	\end{equation}
	In the bounded domain $\{x\in \mathbb{R}^n: f(x)\leq f(x_0), g(x) \leq 0\}$, there is a constant $M$ such that
	\begin{equation}
		\begin{split}
			&\left|\frac{\nabla f}{|\nabla f|}(x(t;1,x_0))- \frac{\nabla f}{|\nabla f|}(x(t;\zeta,x_0))\right|\leq M \left|x(t;1,x_0)- x(t;\zeta,x_0)\right|, \\
			&\left|\frac{\nabla g}{|\nabla g|}(x(t;1,x_0))- \frac{\nabla g}{|\nabla g|}(x(t;\zeta,x_0))\right|\leq M \left|x(t;1,x_0)- x(t;\zeta,x_0)\right|.
		\end{split}
	\end{equation}
	Set
	$$\psi_\zeta(t)=\left|x(t;1,x_0)- x(t;\zeta,x_0)\right|,~~ \Psi_\zeta(T)=\int_0^T\psi_\zeta(t)dt.$$
	Then
	\begin{equation}
		\psi_\zeta(T)\leq 2M\int_0^T\psi_\zeta(t)dt+(1-\zeta)T.
	\end{equation}
	Hence
	\begin{equation}
		\Psi_\zeta^\prime(t)\leq 2M\Psi_\zeta(t)+(1-\zeta)t.
	\end{equation}
	Or
	\begin{equation}
		\left(e^{-2Mt}\Psi_\zeta(t)\right)^\prime\leq (1-\zeta)te^{-2Mt}.
	\end{equation}
	Integrating this inequality gives
	\begin{equation}
		\Psi_\zeta(T)\leq (1-\zeta)e^{2MT}\int_0^T te^{-2Mt}dt.
	\end{equation}
	It follows that for $T\leq -\alpha\log (1-\zeta),$ we have
	\begin{equation}
		\begin{split}
			&\psi_\zeta(T)\leq 2M(1-\zeta)\int_0^T te^{2M(T-t)}dt+(1-\zeta)T\\
			&\hskip 10mm \leq (1-\zeta)Te^{2MT}\leq -\alpha(1-\zeta)^{1-2M\alpha}\log (1-\zeta).
		\end{split}
	\end{equation}
	Therefore, for $t\in (0,-\alpha \log (1-\zeta))$
	\begin{equation}
		\left|x(t;1,x_0)- x(t;\zeta,x_0)\right|\leq -\alpha(1-\zeta)^{1-2M\alpha}\log (1-\zeta).
	\end{equation}
	A choice of $\alpha>0$ with $1-2M\alpha>0$ completes the proof of the theorem.
\end{proof}

\begin{myremark}
	Theorem \ref{theorem4} indicates that the time in which the trajectory needs to arrive at the boundary of the feasible set must be larger than $\alpha \log \frac{1}{1-\zeta}$.
\end{myremark}

\begin{mytheorem}
	Let $x_\zeta^\sharp$ be the first point where the trajectory reaches the boundary of the feasible set, then
	\begin{equation}
		x_\zeta^\sharp\in \{x: g(x) = 0, \cos\theta\leq -\zeta \}.
		\label{eq:theorem_5}
	\end{equation}
	This is to say that the point $x_\zeta^\sharp$ belongs to the closure of the $\zeta$-neighborhood of the central path. Especially, the limit of $x_\zeta^\sharp$ as $\zeta\rightarrow 1^-$  is at the intersection of the central path and the boundary of the feasible set.
	\label{theorem5}
\end{mytheorem}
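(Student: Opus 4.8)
The plan is to read the exit condition $\cos\theta\le-\zeta$ directly off the second deformation identity in (\ref{deform}) together with the sign of $g$ along the trajectory just before it leaves the feasible set, and then to obtain the $\zeta\to 1^-$ statement by a compactness argument. Since we have a single constraint $g$ with $\nabla g\neq 0$ on the closed feasible set $\Omega$ (A3), the boundary of $\Omega$ is exactly $\{x:g(x)=0\}$. By Theorem \ref{theorem1}(i) the trajectory leaves $\Omega$, and by part (ii) it does so in finite time; let $T^\sharp=T_{\zeta,x_0}^\sharp$ be the first such time and put $x_\zeta^\sharp=x(T^\sharp;\zeta,x_0)$, so $g(x_\zeta^\sharp)=0$. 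For the relevant case of an interior initial design, $g(x_0)<0$, the function $h(t):=g(x(t;\zeta,x_0))$ then satisfies $h(t)<0$ for $t\in[0,T^\sharp)$ and $h(T^\sharp)=0$; the degenerate situation $g(x_0)=0$ I would dispose of in one line, since then either $x_\zeta^\sharp=x_0$ or, by (\ref{deform}), $g$ first strictly decreases off $\partial\Omega$ and one is reduced to the interior case.

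Next I would compare two evaluations of $h'(T^\sharp)$. Because $x_\zeta^\sharp\in\Omega$, assumptions (A2)--(A4) guarantee that $\mathbf{s}_\zeta$ is $C^1$ in a neighborhood of $x_\zeta^\sharp$, so $h$ is differentiable at $T^\sharp$ and (\ref{deform}) gives $h'(T^\sharp)=-|\nabla g(x_\zeta^\sharp)|\bigl(\zeta+\cos\theta|_{x_\zeta^\sharp}\bigr)$. On the other hand, from $h(t)<0=h(T^\sharp)$ for $t<T^\sharp$ the one-sided limit gives $h'(T^\sharp)=\lim_{t\to T^{\sharp-}}\tfrac{-h(t)}{\,T^\sharp-t\,}\ge 0$. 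Since $|\nabla g(x_\zeta^\sharp)|>0$ by (A3) on the closed set $\Omega$, the two statements force $\zeta+\cos\theta|_{x_\zeta^\sharp}\le 0$, i.e. $x_\zeta^\sharp\in\{x:g(x)=0,\ \cos\theta\le-\zeta\}=\partial\Omega\cap\overline{\Theta_\zeta}$, which is the first assertion of (\ref{eq:theorem_5}).

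For the limit $\zeta\to 1^-$, I would invoke Lemma \ref{lemma1}(i): every $x_\zeta^\sharp$ lies in $\Omega_{f(x_0)}=\{x:f(x)\le f(x_0)\}$, which is closed and, by the coercivity assumption (A1), bounded, and it also lies in the closed set $\{g=0\}$. Hence along any sequence $\zeta_k\uparrow 1$ the points $x_{\zeta_k}^\sharp$ have a subsequential limit $x^*$, and passing to the limit in $g(x_{\zeta_k}^\sharp)=0$ and in $\cos\theta|_{x_{\zeta_k}^\sharp}\le-\zeta_k$ (using $\cos\theta\ge-1$) yields $g(x^*)=0$ and $\cos\theta|_{x^*}=-1$. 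The latter is precisely the normalized central path condition (\ref{eq:normalized_central_path_condition_single_constraint}), so $x^*$ lies on the central path $L$; with $g(x^*)=0$ this says $x^*$ is an intersection point of $L$ with $\partial\Omega$. If $L$ meets the constraint surface transversally --- generically the case by Sard's theorem, exactly as in the footnote to Theorem \ref{theorem3} --- this intersection point is unique, and therefore $x_\zeta^\sharp\to x^*$ as $\zeta\to1^-$.

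The two derivative computations and the compactness step are routine. The main obstacle is the one-sided derivative inequality $h'(T^\sharp)\ge 0$ and the book-keeping around it: ensuring $T^\sharp>0$ (the reason the interior-start case is the one of interest, with the boundary case handled separately), that $h$ is genuinely differentiable at $T^\sharp$ (smoothness of $\mathbf{s}_\zeta$ up to $\partial\Omega$ via (A2)--(A4) and $|\nabla g|\neq 0$ there, so that $T^\sharp$ is strictly inside the maximal existence interval), and that the borderline case $\zeta+\cos\theta|_{x_\zeta^\sharp}=0$ --- a higher-order tangency of the trajectory to $\partial\Omega$ --- remains consistent with, indeed is covered by, the non-strict inequality in (\ref{eq:theorem_5}).
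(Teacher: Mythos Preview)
Your argument is correct and follows essentially the same route as the paper: from $g<0$ before $T^\sharp$ and $g=0$ at $T^\sharp$ you extract $\tfrac{d}{dt}g\big|_{t=T^\sharp}\ge 0$, combine this with the second identity in~(\ref{deform}), and divide by $|\nabla g|>0$ to obtain $\cos\theta\le-\zeta$. Your treatment is in fact more careful than the paper's, which simply asserts the derivative inequality and stops after $\cos\theta\le-\zeta$; you additionally supply the compactness argument for the $\zeta\to1^-$ statement (via Lemma~\ref{lemma1}(i) and coercivity) and address the edge cases of a boundary initial point and differentiability at $T^\sharp$, all of which the paper leaves implicit.
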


\begin{proof}
	~~Let $x_\zeta^\sharp=x(t^\sharp;\zeta,x_0)$, by the choice of the point $x_\zeta^\sharp$,
	\begin{equation*}
		\left\{
		\begin{split}
			&g(x(t^\sharp;\zeta,x_0))=0,\\
			& g(x(t;\zeta,x_0))<0,~~ t<t^\sharp.
		\end{split}\right.
	\end{equation*}
	Hence
	$$
	\frac{d }{dt}g(x(t^\sharp; \zeta,x_0))\geq 0.
	$$
	By (\ref{deform}),
	$$ |\nabla g|(\zeta +\cos \theta)\leq 0. $$
	Therefore,
	$$ \cos \theta\leq -\zeta. $$
	This ends the proof.
\end{proof}

\begin{myremark}
	According to Theorem \ref{theorem5}, obviously, we have $\cos \theta \rightarrow -1^+$ as $\zeta \rightarrow 1^-$. The normalized centrality condition (\ref{eq:normalized_central_path_condition_single_constraint})
	$$\frac{\nabla f}{|\nabla f|} + \frac{\nabla g}{|\nabla g|}  = 0 $$
	can be satisfied. Given that $x_\zeta^\sharp$ is a point on the boundary of the feasible set, the first-order necessary conditions (KKT conditions) are therefore satisfied. This is straightforward as the Lagrange multiplier $\lambda^\star$ associated with the Lagrangian function $\mathcal{L}(x,\lambda)$ for the considered problem is 
	$$ \lambda^\star = \frac{|\nabla f|}{|\nabla g|},$$
	and 
	$$\textcolor[rgb]{0,0,0}{\nabla_x \mathcal{L}}(x_\zeta^\sharp, \lambda^\star) = \nabla f + \lambda^\star \nabla g = 0.$$
	Under assumption (A2) and (A3), we have $\lambda^\star > 0$. With $g(x_\zeta^\sharp) = 0$ strict complementarity holds. To find out whether the point $x_\zeta^\sharp$ is a local solution, one needs to check the second-order sufficient conditions. We discuss this in the next section.
	\label{remark_fo}
\end{myremark}

\textcolor[rgb]{0,0,0}{
	\begin{myremark}
		Based on Theorem \ref{theorem5}, we propose an error measure $\epsilon > 0$ for the optimization solutions
		\begin{equation}
			\epsilon = 1-\zeta.
		\end{equation}
		According (\ref{eq:theorem_5}), we then have $$x_\zeta^\sharp\in \{x: g(x) = 0, \cos\theta\leq -1 + \epsilon \}.$$
		The KKT conditions are satisfied for $x_\zeta^\sharp$ when $\cos \theta = -1$ as mentioned in Remark \ref{remark_fo}. A small value $\epsilon>0$ seems to be a natural error measure for the present method. Additionally, $\epsilon$ is defined using the intrinsic parameter $\zeta$ that determines the shape of the optimization trajectory. With the error measure $\epsilon$ we can interpret Theorem \ref{theorem1}(ii) and Theorem \ref{theorem4} as follows. Theorem \ref{theorem1}(ii) implies the time for the trajectory to find a first-order $\epsilon$-optimal solution is $\mathcal{O}(1/\epsilon)$, which is the ergodic rate of convergence for first-order methods; Theorem \ref{theorem4} implies the trajectory is convergent to the trajectory $x(t;1, x_0)$ uniformly for $t\in (0, \alpha \log(1/\epsilon))$.
		\label{remark_error}
	\end{myremark}	
}

%%%%%%%%%%%%%%%%%%%%%%%%%%%%%%%%%%%%%%%%%%%%%%%%%%%%%

\section{Local convergence analysis}
\label{sec:local_analysis}

Consider optimization problem (\ref{eq:Optimization_Problem}) with a single inequality constraint. For simplicity, let the origin be a point on the central path, and $x_n$ lies in the direction of $\frac{\nabla f}{|\nabla f|}$, i.e.,
\begin{equation}
	\nabla f(0,...,0) = \left[ 0,...,0,\frac{\partial f}{\partial x_n}\right] \ne \mathbf{0}.
\end{equation}
By the implicit function theorem, we have a function $x_n = \phi(x_1,...,x_{n-1})$, which satisfies
\begin{equation}
	f(x_1, \cdots, x_{n-1}, \phi(x_1, \cdots, x_{n-1}))\equiv f(0, \cdots, 0, 0).
\end{equation}
Furthermore:
\begin{itemize}
	\item[\textcircled{1}] At the origin,
	\begin{equation}
		\phi(0,...,0) = 0;
		\label{eq:phi_condition_1}
	\end{equation}
	\item[\textcircled{2}] In a neighborhood of the origin,
	\begin{equation}
		\frac{\partial \phi}{\partial x_k} = - \frac{\partial f}{\partial x_k} / \frac{\partial f}{\partial x_n}.
		\label{eq:phi_condition_2}
	\end{equation}
	Thus we have
	\begin{equation}
		\frac{\partial \phi(\mathbf{0})}{\partial x_k} = 0, ~~ 1 \leq k \leq n-1.
	\end{equation}
	
\end{itemize}

\textcolor[rgb]{0,0,0}{The contours of the objective function is the graph of the implicit function
	$x_n=\phi(x_1,...,x_{n-1})$.  For $n=2$, the curvature formula gives
	$$\kappa_f=\left.\frac{\phi^{\prime\prime}(x_1)}
	{(1+(\phi^\prime(x_1))^2)^\frac{3}{2}}\right|_{x_1=0}=\phi^{\prime\prime}(0).
	$$
}

\textcolor[rgb]{0,0,0}{For $n>2$, we choose a direction $\omega=(\omega_1,\cdots, \omega_{n-1})$ with $|\omega|=1$.
	%in the hyperplane with the normal $\nabla f(0) $.
	%We denote this hyperplane as $\tilde{x} = (x_1, ..., x_{n-1})$.
	\textcolor[rgb]{0,0,0}{Then, in the two dimensional subspace defined by $\mathbb{R}^2=\{\omega t,x_n\}= \{\omega_1 t,\cdots, \omega_{n-1} t, x_n\}$,} the function $x_n=\phi(\omega t)$ has curvature in direction $\omega$,
	\begin{equation}\kappa_f^\omega=\left.\frac{d^2\phi(\omega t)}{dt^2}\right|_{t=0}
		=\sum\limits_{k=1,j=1}^{n-1} \frac{\partial^2 \phi(\mathbf{0})}{\partial x_k \partial x_j} \omega_k \omega_j.
		\label{directioncurvaturephi}
	\end{equation}
}

Similar analysis on $\psi$, which is deduced from the implicit function $g=\text{constant}$, gives:
\begin{equation}
	\kappa_g^\omega=\left.\frac{d^2\psi(\omega t)}{dt^2}\right|_{t=0}
	=\sum\limits_{k=1,j=1}^{n-1} \frac{\partial^2 \psi(\mathbf{0})}{\partial x_k \partial x_j} \omega_k \omega_j.
	\label{directioncurvaturepsi}
\end{equation}\vskip 2mm
Now the conjectured \textit{relative convex condition} (\ref{eq: relative_convex}) in the two dimensional subspace $\mathbb{R}^2=\{\omega t,x_n\}$ would be $\kappa_f^\omega < \kappa_g^\omega$, i.e.,
\begin{equation}
	\sum\limits_{k=1,j=1}^{n-1} \frac{\partial^2 \phi(\mathbf{0})}{\partial x_k \partial x_j} \omega_k \omega_j <
	\sum\limits_{k=1,j=1}^{n-1} \frac{\partial^2 \psi(\mathbf{0})}{\partial x_k \partial x_j} \omega_k \omega_j
	\label{Hessiancomp}
\end{equation}
for any direction $\omega$.
Let $H_{\phi} (\mathbf{0})$ and $H_{\psi} (\mathbf{0})$ be the Hessian matrix of $\phi$ and $\psi$ about variable $\tilde{x}$, respectively. Then, in the sense of positive definite matrix, we have
\begin{equation}\label{semi-positive definition}
	H_{\phi} (\mathbf{0}) \prec H_{\psi} (\mathbf{0}).
\end{equation}

At $(x_1,...,x_{n-1}) = (0,...,0)$, we have
\begin{equation}
	\begin{split}
		\frac{\partial^2 \phi}{\partial x_k \partial x_j} &= \frac{\partial}{\partial x_k} \left(\frac{\partial \phi}{\partial x_j}\right)
		= -\frac{\partial}{\partial x_k} \left(\frac{\partial f}{\partial x_j} / \frac{\partial f}{\partial x_n}\right) \\
		&= - \left( \frac{\partial f}{\partial x_n} \right)^{-1} \frac{\partial^2 f}{\partial x_k \partial x_j}.
	\end{split}
\end{equation}
Notice that
\begin{equation}
	\nabla^2_{\tilde{x}} f = \left( \frac{\partial^2 f}{\partial x_k \partial x_j}  \right)_{ 1 \leq k,j \leq n-1}.
\end{equation}
Thus, we have
\begin{equation}
	H_{\phi}(\mathbf{0}) = - \left( \frac{\partial f}{\partial x_n} \right)^{-1} \nabla^2_{\tilde{x}} f(\mathbf{0}).
\end{equation}
Similarly, we have for $H_{\psi} (\mathbf{0})$
\begin{equation}
	H_{\psi} (\mathbf{0}) = - \left( \frac{\partial g}{\partial x_n} \right)^{-1} \nabla^2_{\tilde{x}} g(\mathbf{0}).
\end{equation}
Due to the positive definiteness (\ref{semi-positive definition}), we have
\begin{equation}
	- \left( \frac{\partial f}{\partial x_n} \right)^{-1} \nabla^2_{\tilde{x}} f(\mathbf{0}) \prec - \left( \frac{\partial g}{\partial x_n} \right)^{-1} \nabla^2_{\tilde{x}} g(\mathbf{0}).
\end{equation}
Recall at the origin we have
$$ \frac{\partial f}{\partial x_n} = | \nabla f|, ~~\frac{\partial g}{\partial x_n} = -| \nabla g|.$$
Therefore we have
\begin{equation}
	\frac{1}{|\nabla f|} \nabla^2_{\tilde{x}} f(\mathbf{0}) + \frac{1}{|\nabla g|} \nabla^2_{\tilde{x}} g(\mathbf{0}) \succ 0.
	\label{eq:relative_semi_convex}
\end{equation}
Set
\begin{equation}
	\tilde{C} = \frac{1}{|\nabla f|} \nabla^2_{\tilde{x}} f(\mathbf{0}) + \frac{1}{|\nabla g|} \nabla^2_{\tilde{x}} g(\mathbf{0}).
	\label{eq:relative_convex_matrix}
\end{equation}

\begin{mydefinition}
	A point $x$ on the central path is called nondegenerate if $\tilde{C}(x)$ is invertible; it is \textit{relative convex} if $\tilde{C}(x)$ is a positive definite matrix.
\end{mydefinition}
\vskip 2mm

\begin{myremark}
	Let a point $x^\star$ satisfy the KKT conditions. The relative convex condition $\tilde{C}(x^\star) \succ 0 $ is equivalent to the second-order sufficient conditions for constrained optimization. This is straightforward as the matrix 
	$$ \tilde{H} =  |\nabla f(x^\star) | \tilde{C}(x^\star) = \nabla^2_{\tilde{x}} f(\mathbf{0}) + \frac{|\nabla f|}{|\nabla g|} \nabla^2_{\tilde{x}} g(\mathbf{0}) \succ 0$$
	is equivalent to the \textit{projected Hessian} being positive definite \cite[p.~348]{Nocedal}, with a Lagrange multiplier $\lambda^\star$ satisfying the KKT conditions and strictly complementarity holding,
	$$ \lambda^\star = \frac{|\nabla f|}{|\nabla g|} > 0, ~g(x^\star) = 0.$$
	Here, we deduce the relative convex condition from the perspective of the difference in the curvatures of the function contours in the feasible set. It is defined on the central path and may be seen as a perturbed version of the second-order sufficient conditions.
	\label{remark_so}
\end{myremark}
\vskip 2mm
%\textcolor[rgb]{0,0,0}{
%	{\bf Remark 5:} }
%

%in the difference in the curvatures of the contours of the objective and constraint function 
%}
%	Here we deduce it from a curvature condition (\ref{eq: relative_convex}).}

At the origin, the Jacobian matrix for $\frac{\nabla f}{|\nabla f|}$ reads
\begin{equation}
	J_f^n = \left( \frac{\partial}{\partial x_j}  \left( \frac{\partial_{x_i} f}{|\nabla f|} \right)\right)_{1\leq i,j\leq n}.
\end{equation}
For $ 1 \leq i \leq n-1$, $\partial_{x_i} f = 0$ at the origin, thus
\begin{equation}
	\left( \frac{\partial}{\partial x_j}  \left( \frac{\partial_{x_i} f}{|\nabla f|} \right)\right) = \frac{\partial_{x_i} \partial_{x_j} f}{|\nabla f|}.
\end{equation}
Recall also at the origin we have
\begin{equation}\left\{
	\begin{split}
		\frac{\nabla f}{|\nabla f|} &= (\frac{\partial_{x_1} f}{|\nabla f|},...,\frac{\partial_{x_{n-1}} f}{|\nabla f|},\frac{\partial_{x_n} f}{|\nabla f|}) = (0,...,0,1), \\
		\frac{\nabla g}{|\nabla g|} &= (\frac{\partial_{x_1} g}{|\nabla g|},...,\frac{\partial_{x_{n-1}} g}{|\nabla g|},\frac{\partial_{x_n} g}{|\nabla g|}) = (0,...,0,-1).
	\end{split}
	\right.
\end{equation}
At the origin, $\frac{\partial_{x_n} f}{|\nabla f|} $ has a maximum value $1$, resulting in its derivatives being zero:
\begin{equation}
	\frac{\partial}{\partial x_k} \left( \frac{\partial_{x_n} f}{|\nabla f|}\right) = 0, ~ 1\leq k \leq n.
\end{equation}
Thus, the Jacobian matrix $J_f^n $ for $\frac{\nabla f}{|\nabla f|}$ at the origin reads
\begin{equation}
	J_f^n =
	\begin{bmatrix}
		\begin{array}{ccccc|c}
			& & & & & \frac{\partial}{\partial x_n} \left( \frac{\partial_{x_1} f}{|\nabla f|}\right) \\
			& & J_f^{n-1} & & &  \vdots \\
			& & & & & \frac{\partial}{\partial x_n} \left( \frac{\partial_{x_{n-1}} f}{|\nabla f|}\right) \\
			\hline
			0 & & \cdots & & 0 & 0
		\end{array}
	\end{bmatrix}
\end{equation}
where $ J_f^{n-1}$ is an $(n-1) \times (n-1)$ matrix:
\begin{equation}
	J_f^{n-1} = \frac{1}{|\nabla f|} \left( \partial_{x_i} \partial_{x_j} f\right)_{ 1\leq i,j \leq n-1}.
\end{equation}

Similarly, we have the Jacobian matrix $J_g^n$ for $\frac{\nabla g}{|\nabla g|}$ at the origin that reads
\begin{equation}
	J_g^n=
	\begin{bmatrix}
		\begin{array}{ccccc|c}
			& & & & & \frac{\partial}{\partial x_n} \left( \frac{\partial_{x_1} g}{|\nabla g|}\right) \\
			& & J_g^{n-1} & & &  \vdots \\
			& & & & & \frac{\partial}{\partial x_n} \left( \frac{\partial_{x_{n-1}} g}{|\nabla g|}\right) \\
			\hline
			0 & & \cdots & & 0 & 0
		\end{array}
	\end{bmatrix}
\end{equation}
where $ J_g^{n-1}$ is also an $(n-1) \times (n-1)$  matrix:
\begin{equation}
	J_g^{n-1} = \frac{1}{|\nabla g|} \left( \partial_{x_i} \partial_{x_j} g\right)_{1\leq i,j \leq n-1}.
\end{equation}
So we get matrix $\tilde{C}=J_f^{n-1} + J_g^{n-1}$ once more again. % According to the \textit{relative convex condition} shown below in (\ref{eq:relative_convex_matrix}),
%\begin{equation*}
%	\tilde{C} = J_f^{n-1} + J_g^{n-1} \succ 0
%\end{equation*}

By choosing a new coordinate system, say  $\tilde{x}= (x_1, ...,x_{n-1})$ again, whose basis vectors are the eigenvectors $\mathbf{v}$ of $\tilde{C}$, the matrix $\tilde{C}$ transforms to a diagonal matrix $\tilde{C}_\lambda$ in the new coordinate system:
%By choosing a coordinate system $\mathbf{x}_\lambda = (\underline{x}_1, ...,\underline{x}_{n-1})$ which basis are the eigenvectors $\mathbf{v}$ of $\tilde{C}$, the matrix $\tilde{C}$ transforms to a diagonal matrix $\tilde{C}_\lambda$ in the new coordinate system:
\begin{equation}
	\tilde{C}_\lambda =
	\begin{bmatrix}
		\lambda_1 & & 0\\
		& \ddots & \\
		0 & & \lambda_{n-1}\\
	\end{bmatrix}
\end{equation}
with $\lambda_1, ..., \lambda_{n-1}$, which are the eigenvalues of the symmetric matrix $\tilde{C}$. Hence
%In the following discussions, we always use the n-dimensional coordinate system $\underline{x}$ that consists of $\mathbf{x}_\lambda$ and $x_n$: $\underline{x} = [\mathbf{x}_\lambda, x_n] = (\underline{x}_1, ...,\underline{x}_{n-1}, x_{n})$.
%Therefore, at the origin of $\underline{x} $ we have
\begin{equation}
	J_f^n + J_g^n =
	\begin{bmatrix}
		\begin{array}{ccccc|c}
			\lambda_1 & & & & & \mu_1 \\
			& & \ddots & & &  \vdots \\
			& & & & \lambda_{n-1} & \mu_{n-1} \\
			\hline
			0 & & \cdots & & 0 & 0
		\end{array}
	\end{bmatrix}
\end{equation}
where $\mu_i = \frac{\partial}{\partial x_n} \left( \frac{\partial x_i f}{|\nabla f|}\right) + \frac{\partial}{\partial x_n} \left( \frac{\partial x_i g}{|\nabla g|}\right)$.

Assume that at the origin, the tangent of the central path $\mathcal{L}: \frac{\nabla f}{|\nabla f|} + \frac{\nabla g}{|\nabla g|} = 0$ writes as
\begin{equation}
	x_i = l_i x_n, ~ 1 \leq i \leq n-1.
	\label{eq:cp_tangent}
\end{equation}
The directional derivative of $\frac{\nabla f}{|\nabla f|} + \frac{\nabla g}{|\nabla g|} $ along the tangent of cental path $L$ is zero, we have
\begin{equation}
	\lambda_i l_i + \mu_i = 0, ~ 1 \leq i \leq n-1.
\end{equation}
We denote at origin:
\begin{equation}\left\{
	\begin{split}&a_{ij} = \frac{\partial_{x_i} \partial_{x_j} g}{|\nabla g|}, ~1 \leq i,j \leq n-1,\\
		&b_i = \partial_{x_n} \left( \frac{\partial_{x_i} g}{|\nabla g|}\right).
	\end{split}
	\right.
\end{equation}
The Taylor formula of the negative search direction $\frac{\nabla f}{|\nabla f|} + \zeta \frac{\nabla g}{|\nabla g|}$ reads
\begin{equation}
	\begin{split}
		\textcolor[rgb]{0,0,0}{\left(
			\frac{\nabla f}{|\nabla f|} + \zeta \frac{\nabla g}{|\nabla g|}\right)^T} =
		&\begin{bmatrix}
			\begin{array}{ccccc|c}
				\lambda_1 & & & & & -\lambda_1 l_1 \\
				& & \ddots & & &  \vdots \\
				& & & & \lambda_{n-1} & -\lambda_{n-1} l_{n-1} \\
				\hline
				0 & & \cdots & & 0 & 0
			\end{array}
		\end{bmatrix}
		\begin{bmatrix}
			x_1\\
			\vdots\\
			x_{n-1}\\
			x_n
		\end{bmatrix}\\
		%%%%%%%%%%%%%%%%%%%%%%%
		+ (\zeta -1) &\begin{bmatrix}
			\begin{array}{ccc|c}
				& & &  b_1 \\
				& (a_{ij})_{(n-1)\times(n-1)} & &  \vdots \\
				& & &  b_{n-1} \\
				\hline
				0 & \cdots & 0 & 0
			\end{array}
		\end{bmatrix}
		\begin{bmatrix}
			x_1\\
			\vdots\\
			x_{n-1}\\
			x_n
		\end{bmatrix}\\
		%%%%%%%%%%%%%%%%%%%%%%%
		+ &\begin{bmatrix}
			0\\
			\vdots\\
			0\\
			1-\zeta
		\end{bmatrix} + o(\rho).
	\end{split}
	\label{Taylor}
\end{equation}

Summarizing the above analysis, we have
\vskip 2mm
\begin{mylemma}
	If a point on the central path $L$ is nondegenerate, then the system (\ref{system})
	may locally be considered as a perturbation of the linear equations:
	\begin{equation}
		\left\{
		\begin{split}
			&\frac{dx_i}{dt} = -\lambda_i x_i + \lambda_i l_i x_n + (1- \zeta) \left(\sum\limits_{j = 1}^{n-1}a_{ij} x_j + b_i x_n\right),~ 1 \leq i \leq n-1 \\
			&\frac{dx_n}{dt} = -(1 - \zeta).
		\end{split}
		\right.
		\label{linearized}
	\end{equation}
\end{mylemma}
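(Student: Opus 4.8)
The plan is to assemble the linearized system (\ref{linearized}) by combining the Taylor expansion (\ref{Taylor}) of the (negative of the) search direction with the bookkeeping already set up in the excerpt. Recall that $\textbf{s}_\zeta = -\frac{\nabla f}{|\nabla f|} - \zeta\frac{\nabla g}{|\nabla g|}$, so that $\frac{dx}{dt} = \textbf{s}_\zeta = -\bigl(\frac{\nabla f}{|\nabla f|} + \zeta\frac{\nabla g}{|\nabla g|}\bigr)$; the whole point of having written (\ref{Taylor}) is that the right-hand side is now expressed, near the chosen central-path point taken to be the origin, as a sum of three matrix/vector terms plus a remainder $o(\rho)$, where $\rho = |x|$. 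First I would simply read off the $i$-th component ($1 \le i \le n-1$) of $\textbf{s}_\zeta$ from (\ref{Taylor}): the first matrix contributes $-\lambda_i x_i + \lambda_i l_i x_n$ (the sign flips because $\frac{dx}{dt}$ is the \emph{negative} of the bracketed vector, while in (\ref{Taylor}) the diagonal entry is $+\lambda_i$ with last column $-\lambda_i l_i$, so negating gives exactly $-\lambda_i x_i + \lambda_i l_i x_n$ — one has to track this sign carefully); the second term contributes $(\zeta-1)\bigl(\sum_{j}a_{ij}x_j + b_i x_n\bigr)$ which negates to $(1-\zeta)\bigl(\sum_j a_{ij}x_j + b_i x_n\bigr)$; and the third term has zero $i$-th component for $i \le n-1$. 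For the $n$-th component, the first two matrices have a zero bottom row, the third vector contributes $1-\zeta$ in the last slot, and negating gives $\frac{dx_n}{dt} = -(1-\zeta)$. This yields (\ref{linearized}) exactly.

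The only substantive content beyond this algebraic transcription is the meaning of the phrase ``may locally be considered as a perturbation of the linear equations.'' Here I would invoke the nondegeneracy hypothesis: the point on the central path is nondegenerate means $\tilde{C}$ is invertible, hence all eigenvalues $\lambda_1,\dots,\lambda_{n-1}$ are nonzero, so the linear part of the system genuinely governs the local dynamics (no center directions coming from a vanishing $\lambda_i$). The remainder term $o(\rho)$ from (\ref{Taylor}) is then, by assumption (A4) (twice continuous differentiability of $f$ and $g$, which makes $\frac{\nabla f}{|\nabla f|}$ and $\frac{\nabla g}{|\nabla g|}$ continuously differentiable near the origin since $\nabla f, \nabla g \ne 0$ there by (A2)--(A3)), a genuine higher-order perturbation: the vector field $\textbf{s}_\zeta$ equals the right-hand side of (\ref{linearized}) plus a term that is $o(|x|)$ as $x \to 0$. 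That is precisely the statement that (\ref{system}) is, locally near this central-path point, a perturbation of (\ref{linearized}).

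I would then note two small points of rigor. First, the coordinate normalizations used to derive (\ref{Taylor}) — placing the central-path point at the origin, aligning $x_n$ with $\nabla f/|\nabla f|$, and rotating the $\tilde{x} = (x_1,\dots,x_{n-1})$ coordinates to diagonalize $\tilde{C}$ — are all affine isometries (a translation plus an orthogonal change of the first $n-1$ coordinates), so they transform the autonomous ODE $\frac{dx}{dt} = \textbf{s}_\zeta(x)$ into another autonomous ODE of the same form and do not affect the $o(\rho)$ estimate. Second, the identity $\lambda_i l_i + \mu_i = 0$ (directional derivative of $\frac{\nabla f}{|\nabla f|} + \frac{\nabla g}{|\nabla g|}$ along the central-path tangent vanishes, because the central path lies in the zero set of that map) is what lets us rewrite the last column of $J_f^n + J_g^n$ in terms of $-\lambda_i l_i$; this substitution has already been carried out inside (\ref{Taylor}), so in the proof I only need to cite it.

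I do not expect any real obstacle here — the lemma is essentially a summary/restatement, and the ``proof'' is a careful reading of (\ref{Taylor}) together with the sign flip from $\frac{dx}{dt} = \textbf{s}_\zeta = -(\cdots)$. If anything, the one place to be careful is exactly that sign and the direction convention: (\ref{Taylor}) expands $\bigl(\frac{\nabla f}{|\nabla f|} + \zeta\frac{\nabla g}{|\nabla g|}\bigr)^T$, i.e.\ the \emph{negative} of the search direction, so every term must be negated when writing $\frac{dx}{dt}$, and one must check this is consistent with the signs displayed in (\ref{linearized}) (it is: $-\lambda_i x_i$, $+\lambda_i l_i x_n$, $+(1-\zeta)(\cdots)$, and $-(1-\zeta)$ in the last equation). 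Beyond bookkeeping, the proof is immediate.
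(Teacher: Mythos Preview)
Your proposal is correct and matches the paper's approach exactly: the paper presents the lemma with the preface ``Summarizing the above analysis, we have'' and gives no separate proof, treating (\ref{Taylor}) itself as the derivation. Your careful sign-tracking from $\frac{dx}{dt} = -\bigl(\frac{\nabla f}{|\nabla f|} + \zeta\frac{\nabla g}{|\nabla g|}\bigr)$ and your remarks on the role of nondegeneracy and the $o(\rho)$ remainder simply make explicit what the paper leaves implicit.
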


In the following, we use the matrix representations
\begin{equation}\left\{
	\begin{split}
		%&y = (y_1, y_2,\cdots,y_{n-1})^T = (x_1, x_2,\cdots, x_{n-1})^T \\
		%&A = \left(a_{ij} \right)_{(n-1)\times (n-1)} \\
		%&\Lambda = (\lambda_1, \lambda_2,\cdots, \lambda_{n-1})^T \\
		%&L = (l_1, l_2, \cdots, l_{n-1})^T \\
		%&B = (b_1, \cdots, b_{n-1})^T \\
		&\Lambda_\zeta = \left( \lambda_i \delta_{ij} -(1-\zeta) a_{ij} \right)_{(n-1) \times (n-1)}, \\
		&B_\zeta = \left(\lambda_i l_i + (1-\zeta) b_i \right)_{(n-1)\times 1}, \\
	\end{split}\right.
\end{equation}
where $\delta_{ij}$ is the Kronecker-delta.

\vskip 2mm
\begin{mylemma}
	Given an initial point $(x_1^0, x_2^0,\cdots,x_n^0), ~x_n^0 >0$, the linearized equation system (\ref{linearized}) has the solution
	\begin{equation}
		\left\{
		\begin{split}
			&\tilde{x} = C_0 e^{-\Lambda_\zeta t} + \Lambda_\zeta^{-1} B_\zeta x_n(t) + (1-\zeta) \Lambda_\zeta^{-2} B_\zeta, \\
			&x_n(t) = x_n^0 - (1-\zeta)t
		\end{split}
		\right.
		\label{limitation}
	\end{equation}
	with
	$$C_0=\tilde{x}_0-\Lambda_\zeta^{-1} B_\zeta x_n^0 -(1-\zeta) \Lambda_\zeta^{-2} B_\zeta.$$
\end{mylemma}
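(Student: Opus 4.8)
The plan is to solve the linear constant-coefficient system (\ref{linearized}) explicitly: the $x_n$-equation decouples and is trivial, and the remaining $(n-1)$-dimensional block is a linear inhomogeneous ODE whose forcing term is affine in $t$, so it can be solved by an elementary ansatz; one then verifies the closed form (\ref{limitation}) by substitution.

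Concretely, first I would integrate $\dfrac{dx_n}{dt}=-(1-\zeta)$ with $x_n(0)=x_n^0$ to obtain $x_n(t)=x_n^0-(1-\zeta)t$, the second line of (\ref{limitation}). Then I would collect the first $n-1$ equations into vector form for $\tilde x=(x_1,\dots,x_{n-1})^T$: using $-\lambda_i x_i+(1-\zeta)\sum_{j}a_{ij}x_j=-(\Lambda_\zeta\tilde x)_i$ and $\lambda_i l_i x_n+(1-\zeta)b_i x_n=(B_\zeta x_n)_i$, the block (\ref{linearized}) becomes
$$\frac{d\tilde x}{dt}=-\Lambda_\zeta\tilde x+B_\zeta\,x_n(t),$$
with forcing $B_\zeta x_n(t)=B_\zeta\big(x_n^0-(1-\zeta)t\big)$ affine in $t$. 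Nondegeneracy of the central-path point makes $\tilde C$, hence $\Lambda_1=\mathrm{diag}(\lambda_1,\dots,\lambda_{n-1})$, invertible, so $\Lambda_\zeta$ is invertible for $\zeta$ sufficiently close to $1$ (the regime of interest), which legitimizes the occurrences of $\Lambda_\zeta^{-1}$ and $\Lambda_\zeta^{-2}$.

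Next I would write the general solution as the homogeneous part $C_0 e^{-\Lambda_\zeta t}$ plus a particular solution. Since the forcing is affine in $t$ and $\Lambda_\zeta$ is invertible, the natural ansatz is $\tilde x_p(t)=\Lambda_\zeta^{-1}B_\zeta\,x_n(t)+R$ with $R$ a constant vector; differentiating and using $\dot x_n=-(1-\zeta)$ forces $-\Lambda_\zeta R=-(1-\zeta)\Lambda_\zeta^{-1}B_\zeta$, i.e. $R=(1-\zeta)\Lambda_\zeta^{-2}B_\zeta$. This gives
$$\tilde x(t)=C_0 e^{-\Lambda_\zeta t}+\Lambda_\zeta^{-1}B_\zeta\,x_n(t)+(1-\zeta)\Lambda_\zeta^{-2}B_\zeta,$$
and imposing $\tilde x(0)=\tilde x_0$ (with $x_n(0)=x_n^0$) yields $C_0=\tilde x_0-\Lambda_\zeta^{-1}B_\zeta x_n^0-(1-\zeta)\Lambda_\zeta^{-2}B_\zeta$, exactly (\ref{limitation}). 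Equivalently one may use variation of parameters, $\tilde x(t)=e^{-\Lambda_\zeta t}\tilde x_0+\int_0^t e^{-\Lambda_\zeta(t-s)}B_\zeta x_n(s)\,ds$, and evaluate the integral of the affine integrand; the ansatz is shorter. I would close the argument by substituting (\ref{limitation}) back into (\ref{linearized}) and checking term by term.

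I do not expect a genuine obstacle: this is a routine linear ODE computation. The only two points needing care are the invertibility of $\Lambda_\zeta$ — where nondegeneracy enters, and which one either assumes or obtains for $\zeta\to1^-$ by continuity of the determinant — and the commutativity of $e^{-\Lambda_\zeta t}$ with $\Lambda_\zeta^{-1}$ and $\Lambda_\zeta^{-2}$, which is what allows the particular solution to take the stated closed form.
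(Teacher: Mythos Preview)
Your proposal is correct and follows essentially the same approach as the paper: integrate the decoupled $x_n$-equation, rewrite the remaining block as the first-order linear system $\frac{d\tilde x}{dt}=-\Lambda_\zeta\tilde x+B_\zeta x_n(t)$, and solve. The paper simply states the solution (\ref{limitation}) after reaching that matrix form, whereas you spell out the affine-ansatz step for the particular solution and note the invertibility and commutativity issues; these additions are sound and only make the argument more explicit.
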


\vskip 2mm
\begin{proof}
	~~Given an initial point $(x_1^0, x_2^0,\cdots,x_n^0), ~x_n^0 >0$, we have
	$$
	x_n(t) = x_n^0 - (1-\zeta)t.
	$$
	Therefore, for $1\leq i \leq n-1$ we have
	\begin{equation}
		\frac{dx_i}{dt} = -\lambda_i x_i + (1-\zeta) \sum\limits_{j=1}^{n-1} a_{ij} x_j + [\lambda_i l_i + (1-\zeta) b_i] [x_n^0 -(1-\zeta)t].
		\label{eq:ODE_index_form}
	\end{equation}
	
	We write the ODE (\ref{eq:ODE_index_form}) in matrix form,
	\begin{equation}
		\frac{d\tilde{x}}{dt} = - \Lambda_\zeta \tilde{x} + (x_n^0 -(1-\zeta) t) B_\zeta.
		\label{eq:ode_matrix}
	\end{equation}
	(\ref{eq:ode_matrix}) has the solution
	\begin{equation}
		\tilde{x} = C_0 e^{-\Lambda_\zeta t} + \Lambda_\zeta^{-1} B_\zeta x_n(t) + (1-\zeta) \Lambda_\zeta^{-2} B_\zeta.
		\label{eq:ode_solution_hd}
	\end{equation}
	where $C_0$ is a vector of constants depending on the initial point $(x_1^0,\cdots,x_n^0)$.
\end{proof}

\begin{mytheorem}
	If $\tilde{C}_\lambda$ is positive definite, then there is a positive $\zeta_0<1$ such that  the solution of the system (\ref{linearized}) has the asymptotic line
	\begin{equation}
		\tilde{x} = \Lambda_\zeta^{-1} B_\zeta x_n + (1-\zeta) \Lambda_\zeta^{-2} B_\zeta,
		\label{infty}
	\end{equation}
	as $t\rightarrow +\infty$
	for any $\zeta_0<\zeta<1$.
\end{mytheorem}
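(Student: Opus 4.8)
The plan is to read everything off the explicit solution (\ref{limitation}) furnished by the preceding lemma and to reduce the assertion to a spectral fact about $\Lambda_\zeta$. Subtracting the candidate asymptotic line (\ref{infty}) from the solution leaves exactly the homogeneous term,
$$\tilde{x}(t) - \left(\Lambda_\zeta^{-1} B_\zeta\, x_n(t) + (1-\zeta)\Lambda_\zeta^{-2} B_\zeta\right) = C_0\, e^{-\Lambda_\zeta t},$$
so the entire theorem is equivalent to the statement that $e^{-\Lambda_\zeta t}\to 0$ as $t\to+\infty$, i.e.\ that every eigenvalue of $\Lambda_\zeta$ has strictly positive real part; this also guarantees that $\Lambda_\zeta$ is invertible, which is what makes the formula (\ref{limitation}) legitimate in the first place. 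Since $x_n(t)=x_n^0-(1-\zeta)t\to-\infty$ for $\zeta<1$, once the homogeneous term is shown to vanish the curve $t\mapsto(\tilde{x}(t),x_n(t))$ approaches the affine set described by (\ref{infty}), which is precisely what ``asymptotic line as $t\to+\infty$'' means.

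First I would record that $\Lambda_\zeta = \tilde{C}_\lambda - (1-\zeta)\,(a_{ij})_{(n-1)\times(n-1)}$ is symmetric: the diagonal block $\tilde{C}_\lambda=\mathrm{diag}(\lambda_1,\dots,\lambda_{n-1})$ is symmetric and $a_{ij}=\partial_{x_i}\partial_{x_j} g/|\nabla g|$ is symmetric at the origin, so the spectrum of $\Lambda_\zeta$ is real and the condition above reduces to ``$\Lambda_\zeta$ is positive definite''. The key step is then a perturbation argument. The map $\zeta\mapsto\Lambda_\zeta$ is affine, hence continuous, and $\Lambda_\zeta\to\tilde{C}_\lambda$ as $\zeta\to1^-$, where $\tilde{C}_\lambda$ is positive definite by hypothesis with smallest eigenvalue $\lambda_{\min}:=\min_i\lambda_i>0$. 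By Weyl's inequality, $\lambda_{\min}(\Lambda_\zeta)\ge \lambda_{\min}-(1-\zeta)\,\|(a_{ij})\|$; hence it suffices to choose any $\zeta_0\in(0,1)$ with $1-\zeta_0<\lambda_{\min}/\|(a_{ij})\|$ (and $\zeta_0$ arbitrary in $(0,1)$ if $(a_{ij})=0$). For every $\zeta_0<\zeta<1$ the matrix $\Lambda_\zeta$ is then positive definite, in particular invertible, and $\|e^{-\Lambda_\zeta t}\|\le e^{-c_\zeta t}$ with $c_\zeta:=\lambda_{\min}(\Lambda_\zeta)>0$ for all $t\ge0$.

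To conclude, for $\zeta_0<\zeta<1$ the term $C_0 e^{-\Lambda_\zeta t}$ decays exponentially to $0$ while $x_n(t)=x_n^0-(1-\zeta)t\to-\infty$; eliminating $t$ via $x_n$ in (\ref{limitation}) shows that the $\tilde{x}$-component of the trajectory approaches the affine function of $x_n$ given by (\ref{infty}), which is the asymptotic line. The only place any care is needed is the perturbation step, specifically the insistence that the positivity of the spectrum (hence invertibility and a positive decay rate $c_\zeta$) survive for $\zeta$ \emph{strictly} below $1$; this is exactly what the explicit Weyl estimate delivers, and it also explains why the conclusion is claimed only on a left neighbourhood $(\zeta_0,1)$ of $1$ rather than for all $\zeta\in[0,1)$. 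Everything else is bookkeeping with the closed-form solution already established.
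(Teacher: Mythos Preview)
Your proposal is correct and follows the same route as the paper: subtract the candidate line from the explicit solution of the preceding lemma to isolate $C_0 e^{-\Lambda_\zeta t}$, then argue that $\Lambda_\zeta$ is positive definite for $\zeta$ sufficiently close to $1$ so this term decays. The paper's proof is terser---it simply asserts one can ``choose a $\zeta_0$ such that $\Lambda_\zeta$ is a positive definite matrix''---whereas you spell out the perturbation step via symmetry of $\Lambda_\zeta$ and Weyl's inequality, which is a welcome clarification rather than a different argument.
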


\vskip 2mm

\begin{proof}
	~~The distance of a point on the trajectory (\ref{eq:ode_solution_hd}) to the asymptotic line (\ref{infty}) is at most $|C_0 e^{-\Lambda_\zeta t} |$. Choose a $\zeta_0$ such that $\Lambda_\zeta$ is a positive definite matrix, then the distance goes to zero as
	$t\rightarrow +\infty$ for any $\zeta_0<\zeta<1$. This is a proof of the theorem.
\end{proof}

To observe the behavior of the linearized equation (\ref{linearized}) when $\zeta\rightarrow 1^-$, we eliminate the variable $t$ with $t = \frac{x_n^0-x_n}{1-\zeta}$, and rewrite the solution (\ref{eq:ode_solution_hd}) as
\begin{equation}
	\tilde{x} = C_0 e^{-\Lambda_\zeta \frac{(x_n^0-x_n)}{(1-\zeta)}} + \Lambda_\zeta^{-1} B_\zeta x_n + (1-\zeta) \Lambda_\zeta^{-2} B_\zeta.
	\label{non-t}
\end{equation}
Notice that $x_n^\prime(t)<0$, and we have $x_n(t) = x_n^0 - (1-\zeta)t<x_n^0.$

\vskip 2mm	
\begin{mytheorem}
	Let $\zeta\rightarrow 1^-$  and the point on the central path be relative convex, then the solution of system (\ref{linearized}) converges to the tangent of the central path $\mathcal{L}$ defined in (\ref{eq:cp_tangent}).
\end{mytheorem}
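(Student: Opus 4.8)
The plan is to work with the $t$-eliminated closed form (\ref{non-t}) of the solution and to take the limit $\zeta\to 1^-$ summand by summand, for each fixed $x_n<x_n^0$ (indeed, uniformly on compact subsets of $\{x_n<x_n^0\}$); the target curve is the tangent line $\tilde x=Lx_n$ with $L:=(l_i)_{i=1}^{n-1}$ from (\ref{eq:cp_tangent}). First I would record the elementary limits of the coefficients: from $\Lambda_\zeta=\bigl(\lambda_i\delta_{ij}-(1-\zeta)a_{ij}\bigr)$ and $B_\zeta=\bigl(\lambda_il_i+(1-\zeta)b_i\bigr)$ one reads off $\Lambda_\zeta\to\tilde C_\lambda=\operatorname{diag}(\lambda_1,\dots,\lambda_{n-1})$ and $B_\zeta\to(\lambda_il_i)_i$. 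Relative convexity means $\tilde C_\lambda\succ0$, hence $\tilde C_\lambda$ and therefore $\Lambda_\zeta$ (for $\zeta$ in a one-sided neighbourhood of $1$) are invertible, and $\Lambda_\zeta^{-1}B_\zeta\to\operatorname{diag}(1/\lambda_i)(\lambda_il_i)_i=L$. Consequently the middle summand $\Lambda_\zeta^{-1}B_\zeta\,x_n\to Lx_n$, while the last summand $(1-\zeta)\Lambda_\zeta^{-2}B_\zeta\to0$ because $\Lambda_\zeta^{-2}B_\zeta$ stays bounded.

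The remaining step, and the one I expect to be the main obstacle, is to show that the transient term $C_0\,e^{-\Lambda_\zeta(x_n^0-x_n)/(1-\zeta)}$ vanishes. Here I would use continuity of the spectrum: $\Lambda_\zeta$ is symmetric (its $(a_{ij})$ block is a Hessian) and converges to the positive definite $\tilde C_\lambda$, so there are $\zeta_0<1$ and $c>0$ with $\operatorname{spec}(\Lambda_\zeta)\subset[c,+\infty)$ for every $\zeta\in(\zeta_0,1)$ --- the same spectral-gap argument that underlies the asymptotic-line theorem proved just above --- and therefore $\|e^{-\Lambda_\zeta s}\|\le e^{-cs}$ for $s\ge0$. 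Next, $C_0=\tilde x_0-\Lambda_\zeta^{-1}B_\zeta\,x_n^0-(1-\zeta)\Lambda_\zeta^{-2}B_\zeta\to\tilde x_0-Lx_n^0$ is bounded uniformly in $\zeta$ near $1$. Finally, restricting to $x_n\le x_n^0-\delta$ with $\delta>0$, the exponent satisfies $(x_n^0-x_n)/(1-\zeta)\ge\delta/(1-\zeta)\to+\infty$ as $\zeta\uparrow1$, so $\|C_0\|\,e^{-c\delta/(1-\zeta)}\to0$ uniformly on that set.

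Combining the three limits gives $\tilde x\to Lx_n$ as $\zeta\to1^-$, i.e. the trajectory of (\ref{linearized}) collapses onto the tangent $\mathcal{L}$ of the central path, which is the assertion. I would close with two care-points. First, the convergence is necessarily taken away from the initial value $x_n=x_n^0$, where the transient term equals $C_0\ne0$ in general; this matches the behaviour seen in the preliminary 2D examples, where the trajectory first travels from the initial design into a neighbourhood of the central path and only then tracks it. Second, the whole argument hinges on making every estimate uniform in $\zeta$ as $\zeta\uparrow1$, and it is precisely the positive definiteness of $\tilde C_\lambda$ --- via the uniform spectral gap $c$ --- that makes this possible; without relative convexity the transient need not decay and the conclusion fails.
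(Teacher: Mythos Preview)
Your proposal is correct and follows essentially the same route as the paper: work from the closed form (\ref{non-t}), use relative convexity to get a uniform lower spectral bound for $\Lambda_\zeta$ (the paper writes this as ``eigenvalues of $\Lambda_\zeta>\tfrac12\min\lambda_i$''), conclude that the transient $C_0e^{-\Lambda_\zeta(x_n^0-x_n)/(1-\zeta)}$ vanishes since $(x_n^0-x_n)/(1-\zeta)\to+\infty$, and read off $\Lambda_1^{-1}B_1x_n=(l_ix_n)$ from the remaining terms. Your version is more explicit than the paper's about the symmetry of $\Lambda_\zeta$, the uniform boundedness of $C_0$, and the fact that the convergence is taken on $\{x_n\le x_n^0-\delta\}$ rather than at $x_n=x_n^0$, but these are refinements of the same argument rather than a different approach.
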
	
\vskip 2mm

\begin{proof}
	~~Following the observation described above, $x_n<x_n^0.$  Let $\zeta \rightarrow 1^-$, then $ (x_n^0-x_n)/(1-\zeta) \rightarrow +\infty$. With the relative convex condition, we have $\lambda_i > 0$. If $ \zeta \rightarrow 1^-$, the eigenvalues of $\Lambda_\zeta$ $> \frac{1}{2} \min (\lambda_i) > 0$. Resulting in $\Lambda_1 \succ 0$ by denoting $\Lambda_{\zeta = 1} = \Lambda_1$.
	Therefore,    
	\begin{equation*}
		e^{-\Lambda_\zeta \frac{(x_n^0-x_n)}{(1-\zeta)}} \rightarrow 0,~~~  \mbox{if}~ \zeta \rightarrow 1^-.
	\end{equation*}
	The solution (\ref{non-t}) converges to
	\begin{equation}
		\tilde{x}= \Lambda_1^{-1} B_1 x_n + (1-1) \Lambda_1^{-2} B_1 = \left( l_i x_n \right)_{(n-1) \times 1},
	\end{equation}
	which is the tangent of the central path $\mathcal{L}$ as is given in (\ref{eq:cp_tangent}).
\end{proof}

Similar method gives:

\vskip 2mm	
\begin{mytheorem}
	Suppose a point $x$ on the central path is nondegenerate and $\tilde{C}_\lambda(x)$ has at least one negative eigenvalue. Let $\zeta\rightarrow 1^-$, then the solution of system (\ref{linearized}) will leave a neighborhood of the central path.
	\label{theorem8}
\end{mytheorem}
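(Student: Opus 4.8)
The plan is to argue directly from the closed form (\ref{non-t}) of the solution of the linearized system, exactly the representation the preceding theorem used, but now tracking the \emph{unstable} directions of $\tilde C_\lambda$. Recall that along the trajectory $x_n < x_n^0$, so for any fixed $x_n < x_n^0$ the exponent $(x_n^0-x_n)/(1-\zeta) \to +\infty$ as $\zeta \to 1^-$. Also $\Lambda_\zeta = \tilde C_\lambda - (1-\zeta)(a_{ij})$ is symmetric and converges to $\Lambda_1 = \tilde C_\lambda$. Nondegeneracy means no eigenvalue of $\tilde C_\lambda$ is zero, and by hypothesis $\tilde C_\lambda$ has $k \ge 1$ negative eigenvalues (counted with multiplicity). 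By continuity of the spectrum of a symmetric matrix under the perturbation $\Lambda_\zeta = \tilde C_\lambda - (1-\zeta)(a_{ij})$, there is a $\zeta_1 < 1$ so that for all $\zeta \in (\zeta_1,1)$ the matrix $\Lambda_\zeta$ has exactly $k$ negative eigenvalues, all bounded above by $-\beta_0$ for some constant $\beta_0 > 0$ independent of $\zeta$.

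\textbf{Spectral projection and exponential amplification.} For $\zeta \in (\zeta_1,1)$ let $P_\zeta$ be the orthogonal spectral projection of $\Lambda_\zeta$ onto the sum of its negative eigenspaces. Then $P_\zeta$ commutes with $\Lambda_\zeta$ and with $e^{-\Lambda_\zeta s}$, depends continuously on $\zeta$, and $P_\zeta \to P_1$, the projection onto the span of the negative eigenvectors of $\tilde C_\lambda$. On $\operatorname{range}(P_\zeta)$ the matrix $-\Lambda_\zeta$ has all eigenvalues $\ge \beta_0$, hence $|e^{-\Lambda_\zeta s} w| \ge e^{\beta_0 s}\,|w|$ for all $s \ge 0$ and all $w \in \operatorname{range}(P_\zeta)$. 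Applying $P_\zeta$ to (\ref{non-t}) and noting that the affine part $\Lambda_\zeta^{-1} B_\zeta x_n + (1-\zeta)\Lambda_\zeta^{-2} B_\zeta$ stays bounded as $\zeta \to 1^-$ on any bounded range of $x_n$ (since $\Lambda_\zeta^{-1}B_\zeta \to \Lambda_1^{-1}B_1 = (l_i)$ and the last term vanishes), we get
\[
	|P_\zeta\,\tilde x(x_n)| \;\ge\; e^{\beta_0 (x_n^0 - x_n)/(1-\zeta)}\,|P_\zeta C_0| \;-\; O(1).
\]

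\textbf{Non-vanishing of the unstable component of $C_0$, and conclusion.} From $C_0 = \tilde x_0 - \Lambda_\zeta^{-1} B_\zeta x_n^0 - (1-\zeta)\Lambda_\zeta^{-2} B_\zeta$ together with $B_1 = (\lambda_i l_i)$ and $\Lambda_1^{-1}B_1 = (l_i)$, one obtains $P_\zeta C_0 \to P_1\big(\tilde x_0 - (l_i) x_n^0\big)$ as $\zeta \to 1^-$, which is the unstable-subspace component of the displacement of the initial point from the tangent line (\ref{eq:cp_tangent}) of the central path. Hence, for every initial point $\tilde x_0$ lying off the codimension-$k$ affine set $\{\,P_1(\tilde x_0 - (l_i) x_n^0) = 0\,\}$ — the stable set of the limiting flow — there is $c>0$ with $|P_\zeta C_0| \ge c$ for $\zeta$ close to $1$. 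Fixing any height $x_n \in (0,x_n^0)$, e.g. $x_n = x_n^0/2$, the displayed inequality then gives $|\tilde x(x_n)| \to +\infty$ as $\zeta \to 1^-$, whereas the central-path point at that height, $(l_i)x_n$, is bounded; so the solution leaves every fixed neighborhood of the central path. This is precisely the mirror image of the relative convex case, where $\Lambda_1 = \tilde C_\lambda \succ 0$ forces $P_1 = 0$ and the exponential term decays.

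\textbf{Main obstacle.} The delicate point is the coupled limit: as $\zeta \to 1^-$ the effective ``time'' $(x_n^0-x_n)/(1-\zeta)$ blows up, so we need the spectral gap $\beta_0$ and the lower bound $c$ on $|P_\zeta C_0|$ to be uniform in $\zeta$; this rests on the continuous (indeed analytic) dependence of the spectral decomposition of $\Lambda_\zeta$ on $\zeta$ and on nondegeneracy, which prevents an eigenvalue from crossing zero on the interval $(\zeta_1,1)$. A secondary issue worth stating explicitly is that ``leave a neighborhood'' must be read for initial data off the lower-dimensional stable set, since trajectories started on that set stay near the central path; generic initial data are what the statement concerns.
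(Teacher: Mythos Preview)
Your argument is correct and is precisely the ``similar method'' the paper alludes to: the paper gives no explicit proof of this theorem beyond the phrase ``Similar method gives,'' meaning the mirror image of the preceding relative-convex case applied to the closed form (\ref{non-t}). You have spelled this out in more detail than the paper does, in particular by isolating the unstable spectral projection $P_\zeta$, establishing the uniform gap $\beta_0$ via continuity of the spectrum of the symmetric perturbation $\Lambda_\zeta = \tilde C_\lambda - (1-\zeta)(a_{ij})$, and making explicit the genericity caveat that the initial point must have a nonzero component in the limiting unstable subspace $\operatorname{range}(P_1)$. These refinements are sound and arguably necessary for a rigorous proof, though the paper is content to leave them implicit.
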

\vskip 2mm

\begin{myremark}
	The present trajectory converges to a central path with $\zeta \rightarrow 1^-$ under the relative convex condition. In Remark \ref{remark_fo}, we state that the first-order necessary conditions can be obtained as the trajectory reaches the boundary of the feasible set as $\zeta \rightarrow 1^-$. In Remark \ref{remark_so}, we state that the second-order sufficient conditions is equivalent to the relative convex condition at the boundary of the feasible set. Combining both statements we conclude: by choosing $\zeta \rightarrow 1^-$, the present trajectory approaches a local solution by traversing along a central path. 
\end{myremark}

%\textcolor[rgb]{0,0,0}{{\bf Remark 6:}} 
%\vskip 2mm

%%%%%%%%%%%%%%%%%%%%%%%%%%%%%%%%%%%%%%%%%%%%%%%%%%%%%

\section{The method for multiple constraints: a formulation based on the logarithmic barrier function}
\label{sec:multiple_constraint}
%We discuss a generalization of the present method for multiple inequality constraints.
Recall the logarithmic barrier function for the problem (\ref{eq:Optimization_Problem}),
$$\Phi(x) = - \sum_{i = 1}^{m} \log (-g_i(x)), ~ i = 1,...,m.$$
%We discuss the generalization of the present method for multiple inequality constraints using barrier function $\Phi(x)$. Recall its properties described in section \ref{sec:results}. 
We first notice that the barrier function grows without bound if $g_i(x) \rightarrow 0^-$ and it is not differentiable at the boundary of the feasible set. To overcome this difficulty, we consider a subset of the feasible set,
\begin{equation}
	\Omega_M=\{x: \Phi(x) \leq M \},
\end{equation}
where M is a sufficiently large positive number so that $\Omega_M$ approximates the original feasible set $\Omega$.
%We therefore consider a subset of the feasible set: let $\epsilon$ be a small positive value, such that there is \textcolor[rgb]{0,0,1}{a bounded nonempty subset}
%\begin{equation}
%\Omega_M=\{x: g_i(x) \leq -\epsilon, ~ i = 1,...,m\}
%\end{equation}
%of the feasible set $\Omega$. 
The barrier function is twice continuously differentiable in $\Omega_M$ and thus fulfills the assumption (A4). 

Set $$ G(x) = \Phi(x) - M, ~ x \in \Omega_M. $$
The original problem (\ref{eq:Optimization_Problem}) can be approximately reformulated as
\begin{equation}
	\begin{split}
		&\textnormal{minimize}~~~  f(x), \\
		&\textnormal{subject to}~~ G(x) \leq 0, \\
	\end{split}
	\label{eq:Log_barrier_approximation}
\end{equation}
so that the present system (\ref{eq:modified_search_direction}) may be applied. 

Notice that the global behavior shown in section \ref{sec: global_behavior} is based on the assumption that $\nabla g \neq 0$ in the feasible set $\Omega$. In general, this may not always be the case for the function $G(x), \forall x \in \Omega_M$. To escape the points where $\nabla \Phi(x) = 0$, we suggest using the gradient descent $-\nabla f(x)$. Thus we get the system (\ref{eq: vector_s_multiple}) for solving multiple inequality constrained optimization with $\zeta\in [0,1)$ as
\begin{equation*}
	\frac{d x}{d t}=\left\{
	\begin{split}
		&-\frac{\nabla f}{|\nabla f|}
		-\zeta\frac{\nabla \Phi}{|\nabla \Phi|},  ~~~~~&\text{if}~~\nabla \Phi \neq 0;\\
		& -\nabla f    ~~~~~~~~~~~~~~~~~~~~&\text{if}~~\nabla \Phi = 0.
	\end{split}\right.
\end{equation*}
Note that in computational practice, $\nabla \Phi = 0$ rarely occurs. The second ODE of the above system serves as a safeguard for the present method.
\vskip 2mm

\begin{mycorollary}
	Let $x_\zeta^\sharp$ be the first point where the resulting trajectory of the system (\ref{eq: vector_s_multiple}) reaches the boundary of $\Omega_M$, then
	$$ x_\zeta^\sharp\in \{x:\Phi(x) = M , \cos\theta\leq -\zeta \}.$$
	This is to say that the point $x_\zeta^\sharp$ belongs to the closure of  $\zeta$-neighborhood of the central path. Especially, the limit of $x_\zeta^\sharp$ as $\zeta\rightarrow 1^-$  is at the intersection of the central path and the boundary of the subset $\Omega_M$.
\end{mycorollary}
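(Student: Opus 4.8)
The plan is to obtain this corollary as a direct transcription of Theorem~\ref{theorem5}, applied to the reformulated single-inequality problem (\ref{eq:Log_barrier_approximation}) whose constraint function is $G(x) = \Phi(x) - M$. Observe first that the feasible set of (\ref{eq:Log_barrier_approximation}) is exactly $\Omega_M = \{x : \Phi(x) \le M\} = \{x : G(x) \le 0\}$, with boundary the level set $\{x : \Phi(x) = M\} = \{x : G(x) = 0\}$, and that the angle $\theta$ is understood throughout as the angle between $\nabla f$ and $\nabla G = \nabla\Phi$, consistently with the normalized central path condition (\ref{eq:normalized_central_path_condition}). So the target membership $x_\zeta^\sharp \in \{x : \Phi(x) = M,\ \cos\theta \le -\zeta\}$ is literally the conclusion (\ref{eq:theorem_5}) of Theorem~\ref{theorem5} read for $G$ in place of $g$.

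First I would check that the hypotheses transfer. Assumption (A1) is unchanged; (A2) holds on $\Omega_M \subset \Omega$; and, as recorded in this section, $\Phi$ --- hence $G$ --- is twice continuously differentiable on $\Omega_M$, so (A4) holds. The delicate point is (A3): $\nabla G = \nabla\Phi$ need not be nonzero throughout $\Omega_M$, which is exactly why the safeguard branch $\tfrac{dx}{dt} = -\nabla f$ appears in (\ref{eq: vector_s_multiple}). However, since $\Phi \to +\infty$ towards $\partial\Omega$ while $M$ is a fixed large (regular) value, the critical points of $\Phi$ lie in the interior of $\Omega_M$, away from $\partial\Omega_M$; hence $\nabla\Phi \ne 0$ in a neighborhood of the exit point $x_\zeta^\sharp$, and there the trajectory of (\ref{eq: vector_s_multiple}) coincides with the trajectory of the single-constraint system (\ref{system})/(\ref{eq:modified_search_direction}) for the constraint $G$. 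In particular the deformation identity (\ref{deform}) is available for $G$ near $x_\zeta^\sharp$: $\tfrac{d}{dt} G(x(t;\zeta,x_0)) = \nabla\Phi \cdot \mathbf{s}_\zeta = -|\nabla\Phi|(\zeta + \cos\theta)$.

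With this in place, the argument of Theorem~\ref{theorem5} goes through verbatim. Writing $x_\zeta^\sharp = x(t^\sharp;\zeta,x_0)$ with $G(x_\zeta^\sharp) = 0$ and $G(x(t;\zeta,x_0)) < 0$ for $t < t^\sharp$ gives $\tfrac{d}{dt} G(x(t^\sharp;\zeta,x_0)) \ge 0$, and the deformation identity then forces $\zeta + \cos\theta \le 0$, i.e. $\cos\theta \le -\zeta$; combined with $\Phi(x_\zeta^\sharp) = M$ this is the claimed membership, so $x_\zeta^\sharp$ lies in the closure of the $\zeta$-neighborhood of the central path. For the limit, letting $\zeta \to 1^-$ squeezes $\cos\theta \le -\zeta$ down to $\cos\theta \to -1$, which is precisely the normalized central path condition (\ref{eq:normalized_central_path_condition}); since every $x_\zeta^\sharp$ additionally satisfies $\Phi = M$, every limit point of $\{x_\zeta^\sharp\}$ as $\zeta \to 1^-$ lies at the intersection of the central path with $\partial\Omega_M$.

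The main obstacle is the bookkeeping around (A3): one must genuinely justify that the non-safeguard branch of (\ref{eq: vector_s_multiple}) governs the dynamics on a neighborhood of $x_\zeta^\sharp$, so that the $C^1$-continuation (Picard) and the deformation identity used in Theorem~\ref{theorem5} are legitimately at our disposal --- and, implicitly, that such an exit point exists at all (which, as in Theorem~\ref{theorem1}, holds when there is no critical point of $f$ in $\Omega_M$). Once that reduction is granted, the corollary is nothing more than Theorem~\ref{theorem5} with $g$ replaced by $G = \Phi - M$.
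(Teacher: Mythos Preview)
Your proposal is correct and follows exactly the route the paper takes: the paper's own proof is a one-line appeal, ``A similar method as used in Theorem~\ref{theorem5} gives the proof,'' and you have simply unpacked that appeal by substituting $G=\Phi-M$ for $g$ and rerunning the deformation-identity argument. Your treatment is in fact more scrupulous than the paper's, since you explicitly flag the (A3) issue with $\nabla\Phi$ possibly vanishing and explain why it does not affect the dynamics near the exit point---a point the paper leaves implicit.
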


\begin{proof}
	~~A similar method as used in Theorem \ref{sec: global_behavior} gives the proof.	
\end{proof}

Let $\mathbf{x}^\star \in \Omega_M$ be a point on the central path of the barrier function method, set
\begin{equation}
	\tilde{C}_\Phi = \frac{1}{|\nabla f|} \nabla^2_{\tilde{x}} f(\mathbf{x}^\star) + \frac{1}{|\nabla \Phi |} \nabla^2_{\tilde{x}} \Phi(\mathbf{x}^\star).
	\label{eq:relative_convex_log_barrier}
\end{equation}

\begin{mydefinition}
	A point $\mathbf{x}^\star \in \Omega_M$ on the central path is \textit{relative convex} if $\tilde{C}_\Phi(\mathbf{x}^\star)$ is a positive definite matrix.	
\end{mydefinition}

\vskip 2mm

\begin{mycorollary}
	Let $\zeta\rightarrow 1^-$, and the point $\mathbf{x}^\star$ on the central path be relative convex. Then, the solution of the linearized system of (\ref{eq: vector_s_multiple}) converges to the tangent line of the central path $\mathcal{L}$ at $\mathbf{x}^\star$.
\end{mycorollary}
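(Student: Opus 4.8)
The plan is to reduce this multiple-constraint corollary to the single-constraint theorem established in Section~\ref{sec:local_analysis} (the theorem immediately preceding Theorem~\ref{theorem8}) by treating $G(x) = \Phi(x) - M$ on $\Omega_M$ exactly as the single constraint $g$ was treated there. The first step is to record that $\Phi$, and hence $G$, is twice continuously differentiable on $\Omega_M$, so the reformulated problem (\ref{eq:Log_barrier_approximation}) satisfies the smoothness assumption (A4). Next, a point $\mathbf{x}^\star$ on the central path of the barrier method satisfies the normalized central path condition $\frac{\nabla f}{|\nabla f|} + \frac{\nabla \Phi}{|\nabla \Phi|} = 0$, which forces $\nabla \Phi(\mathbf{x}^\star) = \nabla G(\mathbf{x}^\star) \neq 0$. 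Consequently, in a neighborhood of $\mathbf{x}^\star$ only the first branch of the system (\ref{eq: vector_s_multiple}) is active, i.e. locally $\frac{dx}{dt} = -\frac{\nabla f}{|\nabla f|} - \zeta\frac{\nabla \Phi}{|\nabla \Phi|}$, and the safeguard direction $-\nabla f$ never triggers. Thus, near $\mathbf{x}^\star$, the dynamics are literally those of system (\ref{system}) with $g$ replaced by $G$.

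With this identification in hand, I would carry out the local normalization of Section~\ref{sec:local_analysis} verbatim: place the origin at $\mathbf{x}^\star$, align $x_n$ with $\frac{\nabla f}{|\nabla f|}$, and use the implicit function theorem to write the objective contour as $x_n = \phi(\tilde{x})$ and the barrier level set $\Phi = \text{constant}$ as $x_n = \psi(\tilde{x})$. The curvature comparison that produced (\ref{eq:relative_semi_convex}) then yields $\frac{1}{|\nabla f|}\nabla^2_{\tilde{x}} f(\mathbf{0}) + \frac{1}{|\nabla \Phi|}\nabla^2_{\tilde{x}}\Phi(\mathbf{0}) \succ 0$ under the relative convex hypothesis, that is $\tilde{C}_\Phi(\mathbf{x}^\star) \succ 0$ with $\tilde{C}_\Phi$ as in (\ref{eq:relative_convex_log_barrier}). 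Diagonalizing $\tilde{C}_\Phi$ in an orthonormal eigenframe $\tilde{x} = (x_1,\dots,x_{n-1})$ produces eigenvalues $\lambda_1,\dots,\lambda_{n-1} > 0$, and the Taylor expansion of the search direction $\frac{\nabla f}{|\nabla f|} + \zeta\frac{\nabla \Phi}{|\nabla \Phi|}$ has exactly the form (\ref{Taylor}), now with the coefficients $a_{ij}, b_i$ defined from $\Phi$ in place of $g$. Hence the system (\ref{eq: vector_s_multiple}) linearizes near $\mathbf{x}^\star$ to the same system (\ref{linearized}), with $\Lambda_\zeta$ and $B_\zeta$ built from the eigenvalues of $\tilde{C}_\Phi$ and with $x_i = l_i x_n$ the tangent line $\mathcal{L}$ of the barrier central path at $\mathbf{x}^\star$.

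Finally I would invoke the single-constraint theorem directly. Since $\tilde{C}_\Phi(\mathbf{x}^\star) \succ 0$, all $\lambda_i > 0$, so for $\zeta$ sufficiently close to $1$ the matrix $\Lambda_\zeta$ is positive definite with eigenvalues bounded below by $\tfrac12\min_i \lambda_i$. Writing the solution in the form (\ref{non-t}) with $t = (x_n^0 - x_n)/(1-\zeta)$ and letting $\zeta \to 1^-$, the exponential factor $e^{-\Lambda_\zeta (x_n^0 - x_n)/(1-\zeta)} \to 0$, so the solution converges to $\tilde{x} = \Lambda_1^{-1} B_1 x_n = (l_i x_n)_{(n-1)\times 1}$, which is precisely the tangent line $\mathcal{L}$ of the central path at $\mathbf{x}^\star$. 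The one point requiring care — and which I regard as the main (if modest) obstacle — is verifying that the Section~\ref{sec:local_analysis} derivation used nothing about the constraint beyond $C^2$-smoothness and non-vanishing gradient at the base point: assumptions (A1)--(A3) did not enter that purely local computation, and both required properties hold for $\Phi$ on $\Omega_M$, so the reduction is legitimate and the corollary follows.
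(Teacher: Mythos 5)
Your proposal is correct and follows essentially the same route as the paper: the paper's own proof simply notes that $\nabla\Phi$ is Lipschitz on $\Omega_M$, that $\nabla\Phi\neq 0$ along (and hence, by continuity, near) the central path, and then invokes the single-constraint local analysis of Section~\ref{sec:local_analysis} with $\Phi$ in place of $g$. Your more detailed verification that the local computation only needs $C^2$-smoothness and a non-vanishing constraint gradient at the base point is exactly the reduction the paper intends.
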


\begin{proof}
	~~In the subset $\Omega_M$, $\nabla \Phi$ is Lipschitz continuous. Under the assumption (A2), we have $\nabla \Phi \neq 0$ along a central path. A uniformly continuous argument shows that $\nabla \Phi \neq 0$ in a close neighborhood of the central path. A similar method shown in section \ref{sec:local_analysis} gives a proof.
\end{proof}

\textcolor[rgb]{0,0,0}{
	To conclude, using the barrier function formulation for problem (\ref{eq:Optimization_Problem}), the resulting trajectory achieves an approximated local solution that locates on the boundary of the subset $\Omega_M$. Notice, too, that the system (\ref{eq: vector_s_multiple}) does not depend on the choice of $M$, and $\Omega_M$ exhaust $\Omega$, i.e., $\Omega = \cup_{M>0} \Omega_M$. This means that the resulting trajectory keeps approaching the boundary of the original feasible set $\Omega$ by crossing the boundary of any subset $\Omega_M$ dependent on $M$. Therefore, it eases the practical implementation of the method since no extra parameter $M$ needs to be defined for the stopping criterion, but rather, a check for each constraint violation would be sufficient.}

\section{Numerical experiments}
\label{sec:experiments}
When implementing the present method numerically, the canonical first-order optimization procedure is defined by the differential equation
\begin{equation}
	\frac{d x}{d t} = \mathbf{s}_\zeta,
	\label{eq:GD_trajectory}
\end{equation}
where %$\mathbf{s}_\zeta = -\frac{\nabla f}{|\nabla f|} -\zeta\frac{\nabla \Phi}{|\nabla \Phi|}$.
\begin{equation*}
	\mathbf{s}_\zeta=\left\{
	\begin{split}
		&-\frac{\nabla f}{|\nabla f|}
		-\zeta\frac{\nabla \Phi}{|\nabla \Phi|},  ~~~~~&\text{if}~~\nabla \Phi \neq 0;\\
		& -\nabla f    ~~~~~~~~~~~~~~~~~~~~&\text{if}~~\nabla \Phi = 0.
	\end{split}\right.
\end{equation*}
%We choose the well-know logarithmic barrier function for multiple constraints,
%$$\Phi(x) = - \sum_{i = 1}^{m} \log (-g_i(x)), ~ i = 1,...,m.$$
In practical applications, this procedure might result in poor performance. We first notice that, according to Theorem \ref{theorem5}, the accuracy of the solutions increases as the parameter $\zeta \rightarrow 1^-$. However, as $\zeta \rightarrow 1^-$, $| \mathbf{s}_\zeta| \downarrow 0$ at the central path, thus resulting in slow convergence rate. %In general, to get a more accurate solution, a bigger $\zeta$ needs to be chosen; the bigger the parameter $\zeta$ chosen, the slower the convergence rate we get.
Another difficulty results from the potential poor scaling of the logarithmic barrier function, which results in ill-conditioned Hessian near the boundary of the feasible set \cite[p.~500-502]{Nocedal}. We will show test examples that suffer from this problem in the following. 

To overcome these difficulties, a step size rule that considers and adapts the parameter $\zeta$ may be needed. 
%This seems, however, to be a fairly challenging technical development.
A good practical self-adaptive $\zeta$ is expected to result in an optimization trajectory that behaves similar to the Mehrotra's practical implementation for IPM \cite{Mehrotra}, which is well-known for its very good performance and its difficulty in deriving a convergence theory. Design and analyze such a self-adaptive $\zeta$ require studies and analysis that exceeds the scope of this manuscript.

Another way to improve the performance might be the implementation of a momentum method or the Nesterov Accelerated Gradient \cite{nesterov1983method} for the present system. Although our first implementations have shown promising performance on some test problems, we leave a systematical study together with the design of a step size rule (that considers self-adaptive $\zeta$) for future work. 

In this work, rather, we propose a more fundamental framework so that the focus is on the numerical behavior of the present trajectory, neglecting any advanced modification. In this framework, we use the fixed parameter $\zeta$. In doing so, we can only get solutions that are $(1-\zeta)$-suboptimal solutions (referred in Theorem \ref{theorem5} and Remark \ref{remark_error}). We let $\mathcal{X}_\zeta$ denote the set of all $(1-\zeta)$-suboptimal solutions,
\begin{equation}
	\mathcal{X}_\zeta = \{x^\sharp : x^\sharp \in \Theta_{\zeta}, \max(g_i(x^\sharp)) = 0 \},
\end{equation}
given that the solutions exist only on the boundary of the feasible set. A different way of defining the suboptimal solutions is given in \cite{skaf2010techniques} and some useful applications using suboptimal solutions are shown.

To get better performance compared to the procedure (\ref{eq:GD_trajectory}), we suggest the following modified dynamical system:
\begin{equation}
	\frac{d x}{d t} = \frac{\mathbf{s}_\zeta}{|\mathbf{s}_\zeta|}.
	\label{eq:NGD_trajectory}
\end{equation}
Referring to the work \cite{murray2019revisiting}, we can say that the systems (\ref{eq:NGD_trajectory}) and (\ref{eq:GD_trajectory}) are topologically equivalent and solutions of (\ref{eq:NGD_trajectory}) are merely arc-length reparameterizations of (\ref{eq:GD_trajectory}) solutions. As the system (\ref{eq:GD_trajectory}) may move slowly at the close neighborhood of a central path, the system (\ref{eq:NGD_trajectory}) moves in the same orbit with constant speed. In \cite{murray2019revisiting}, the authors show that the normalized gradient descent method escapes saddle points ``quickly''. This might be beneficial when solving nonconvex optimization problems, where the gradient of the function $\nabla f(x)$ vanishes at saddle points. In fact, this phenomenon may be similar to the present system (\ref{eq:GD_trajectory}), where $|\mathbf{s}_\zeta| \downarrow 0$ at the central path as $\zeta \rightarrow 1^-$. In the following, we show numerical experiments applying the system (\ref{eq:NGD_trajectory}) with appropriate constant step sizes. Note, by using the logarithmic barrier function formulation for multiple constraints in the system (\ref{eq:NGD_trajectory}), we may still suffer from potential poor scaling behavior near feasible set boundaries.

\subsection{Experiments with common benchmarks}

We first show numerical experiments for the inequality constrained problems in the EA competition at the 2006 IEEE Congress on Evolutionary Computation \cite{liang2006problem}. These benchmarks are widely used among the community of evolutionary algorithms. We choose them as test examples for three reasons. First, they are well defined constrained optimization problems and have different characteristics \cite{rao2016jaya}. Second, they are nontrivial to solve with first-order methods. Third, the relatively simple formulation of the optimization problems allows us to gain deeper insight into the numerical behavior of the present method. We choose the inequality constrained optimization problems for the experimentation. The problem $G12$ is excluded because it has a feasible set consisting of $9^3$ disjointed spheres. For the problem $G24$, which has a feasible set consisting of two disconnected sub-regions, we choose initial designs in the sub-region that contains the reported optimal solution.

We conduct numerical experiments with the parameter $\zeta = 0.98$ and tuned fixed-step sizes. All bound constraints are treated as inequalities. Random initializations, that are away from the reported optimal solution, are selected in the feasible sets.
%For reproducibility, the initial designs are listed in the appendix.
As shown in table \ref{tb:stepsize_0.98}, apart from the problem G19, we find solutions for the test problems that have an absolute error less than 2e-2 compared to the reported optima. More accurate results can be obtained when we choose shorter step sizes and larger parameter $\zeta$.
%that apart from the problem G19, solutions that have an absolute error less than 2e-2 compared to the reported optima are found.

For the problem G19, the reported optimal solution $ \mathbf{x}^\star = $ (1.6699e-17, 3.9637e-16, 3.9459, 1.060e-16, 3.2831, 9.9999, 1.1283e-17, 1.2026e-17, 2.5071e-15, 2.2462e-15, 0.3708, 0.2785, 0.5238, 0.3886, 0.2982) is not achievable with a fixed-step size that has a Euclidean norm of $5e-2$. 

For the problems G01, G04, G06, G07, G08, G24, we find sub-optimal solutions that are close to the reported optimal designs. For the problems G09, G10, G18, G19, sub-optimal solutions close to local minima are found. It appears that the ``no free lunch theorem" \cite{wolpert1997no} may apply to the present method implementation: while we solve some of the test problems in no more than a few hundred of iterations, a much larger number of iterations are needed for the remaining problems.

\begin{table} [h]
	\begin{center}
	\caption{Results with the parameter $\zeta = 0.98$ and tuned step sizes}
	\captionsetup{justification=centering}
	\begin{tabular}{|c | c | c | c | c | c |}
		\hline
		Prob. &  step size&  iters. & f($\mathbf{x}^\star$) & f($\mathbf{x_\zeta}$) & abs. error   \\ [0.5ex]
		\hline\hline
		G01  &  0.002 & 2362 & -15 & -14.7215 & 1.86e-2\\  %[1ex]
		\hline
		G04  &  0.2   & 136  & -3.0665e+4 & -3.0657e+4  & 2.85e-4  \\
		\hline
		G06 &  0.002 & 4826 & -6.9618e+3 & -6.8371e+3 & 1.79e-2\\
		\hline
		G07 &  0.0027 & 3009 & 24.3062 & 24.7876 & 1.98e-2 \\
		\hline
		G08 &  0.01 & 66 & -9.5825e-2& -9.5063e-2 & 0.80e-2\\
		\hline
		G09 &  0.05 & 120 & 6.8063e+2 & 6.9238e+2 & 1.73e-2\\
		\hline		
		G10 &  0.35 & 5319 & 7.0492e+3 & 7.1898e+3 & 1.99e-2 \\
		\hline	
		G18 &  0.01 & 257 & -0.8660 & -0.8546 & 1.32e-2 \\
		\hline	
		G19 &  0.05 & 294 & 32.6556 & 2.7120e+2 & 7.30 \\
		\hline	
		G24 &  0.02 & 268 & -5.5080 &  -5.4147 & 1.69e-2 \\
		\hline			
		%		\caption{The pros and cons of Scala's optional classes}									
	\end{tabular}
	\label{tb:stepsize_0.98}	
	\end{center}
\end{table}

Still, to get more insight into the numerical behavior of the method implementation, we plot the centrality measure $\cos \theta$ over the optimization process for each test problem in figure \ref{fig:convergence_plots}. The dashed-lines indicate the $\zeta$-neighborhood.
The results may be summarized in three categories:

\textit{Category 1.} G04 and G09: optimization traverses within the $\zeta$-neighborhood;

\textit{Category 2.} G01, G07, G08, G19, G19, and G24: optimization traverses to the $\zeta$-neighborhood but zigzags when it gets close to the solutions;

\textit{Category 3.} G06 and G10: optimization zigzags around the $\zeta$-neighborhood during the optimization process.

The reasons for the zigzagging may be two-folds. First, it may be due to the poor scaling of the logarithmic barrier function near the boundary of the feasible set. The second reason may be the ``overshooting": a large fixed-step size is unable to achieve a small $\zeta$-neighborhood when close to an optimal solution. In problem G06 and G10, the central paths locate closely to the boundaries of the respective feasible sets, thus resulting in zigzagging throughout the whole optimization process. In figure \ref{fig:G06_feasibleset}, we show the narrow feasible set of the test problem G06. The central path traverses close to the two boundaries of the feasible set. A similar phenomenon can be observed in problem G10. In problem G08, the zigzagging disappears when sufficiently short step sizes are chosen. It thus supports our argument of the ``overshooting".

\begin{figure} [h]
	\centering
	\begin{subfigure}{0.3\textwidth}
		\centering\includegraphics[width=\linewidth]{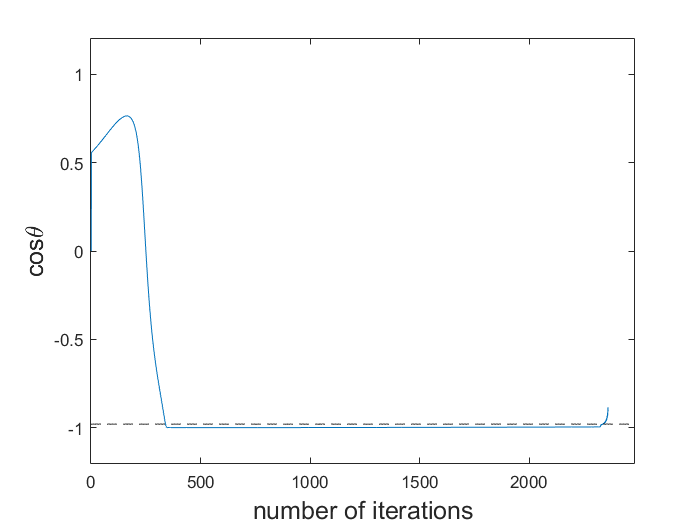}
		\caption{G01}
	\end{subfigure}
	\begin{subfigure}{0.3\textwidth}
		\centering\includegraphics[width=\linewidth]{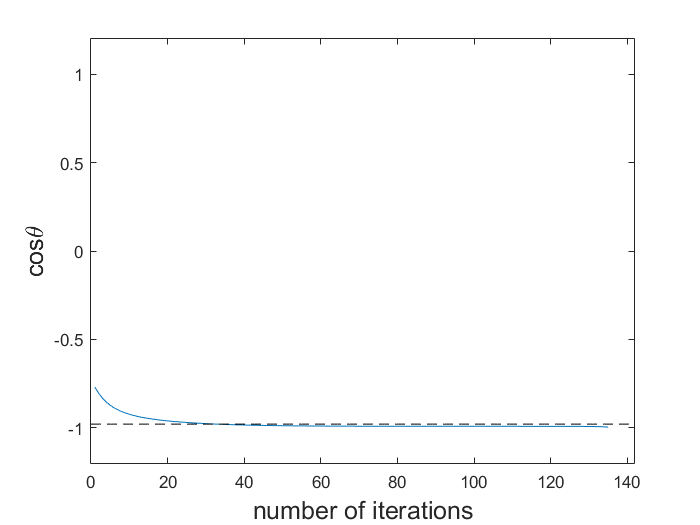}
		\caption{G04}
	\end{subfigure}
	\begin{subfigure}{0.3\textwidth}
		\centering\includegraphics[width=\linewidth]{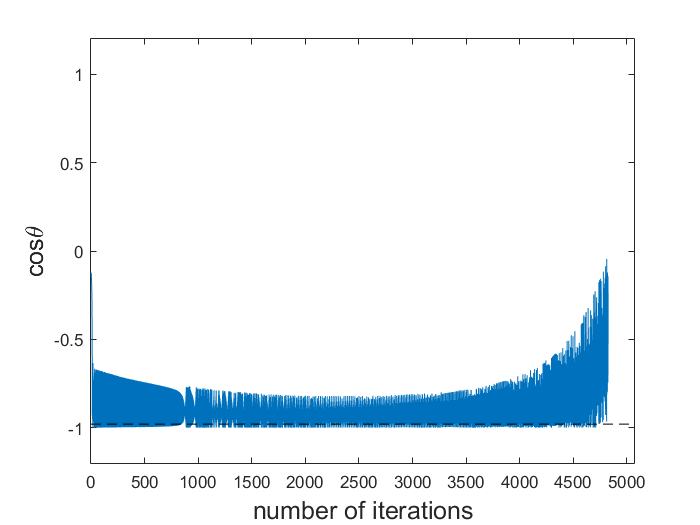}
		\caption{G06}
	\end{subfigure}
	\begin{subfigure}{0.3\textwidth}
		\centering\includegraphics[width=\linewidth]{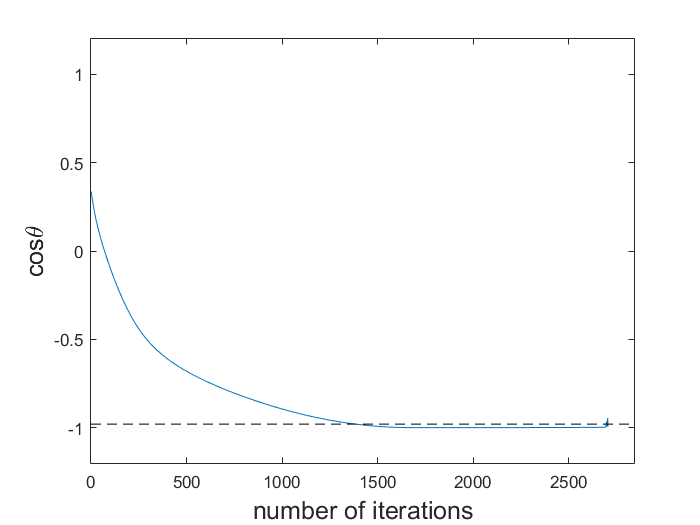}
		\caption{G07}		
	\end{subfigure}
	\begin{subfigure}{0.3\textwidth}
		\centering\includegraphics[width=\linewidth]{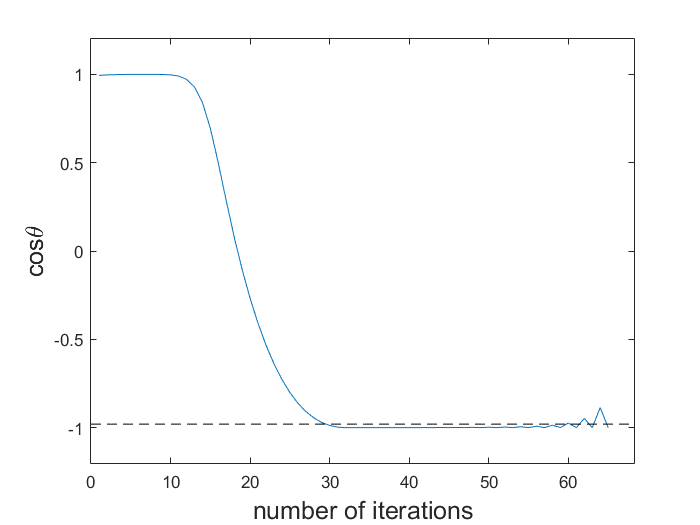}
		\caption{G08}
	\end{subfigure}	
	\begin{subfigure}{0.3\textwidth}
		\centering\includegraphics[width=\linewidth]{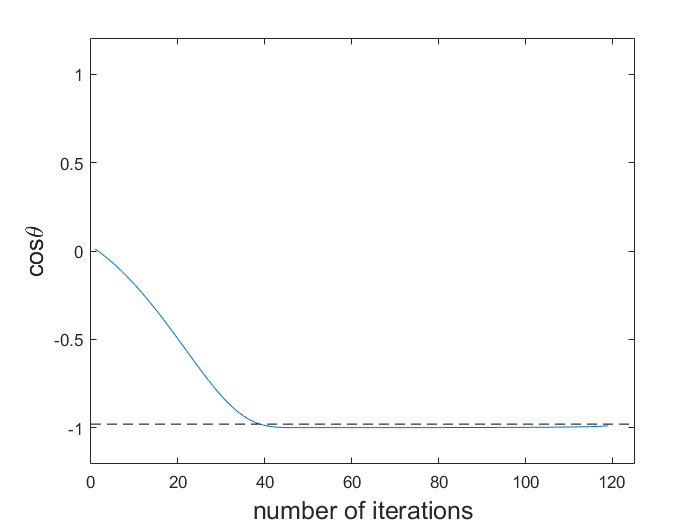}
		\caption{G09}
	\end{subfigure}	
	\begin{subfigure}{0.3\textwidth}
		\centering\includegraphics[width=\linewidth]{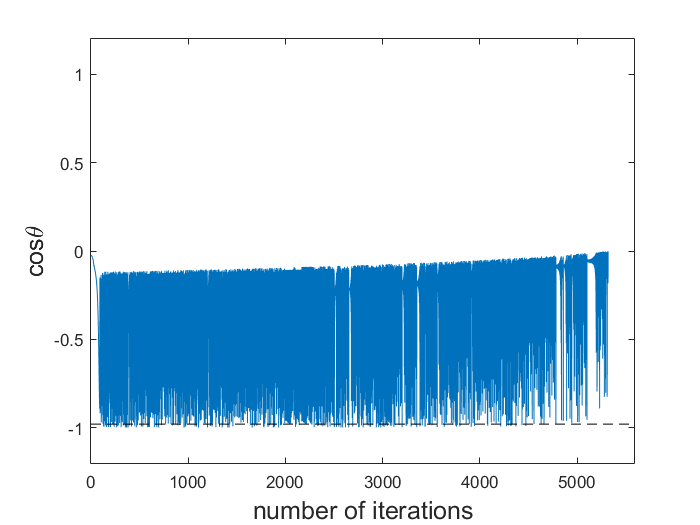}
		\caption{G10}
	\end{subfigure}
	\begin{subfigure}{0.3\textwidth}
		\centering\includegraphics[width=\linewidth]{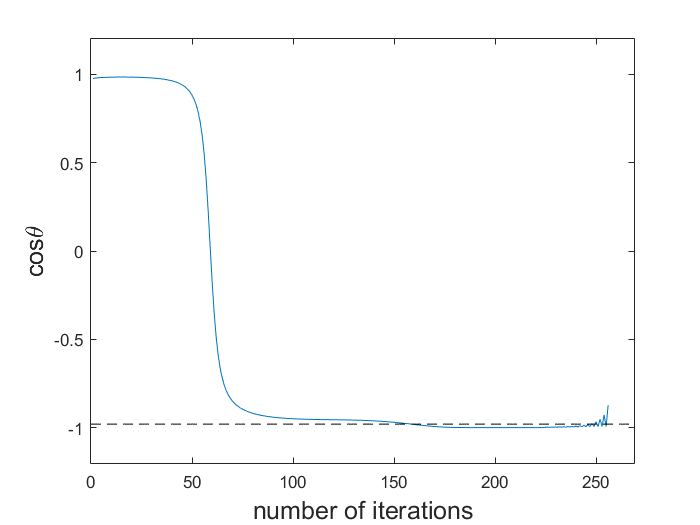}
		\caption{G18}		
	\end{subfigure}
	\begin{subfigure}{0.3\textwidth}
		\centering\includegraphics[width=\linewidth]{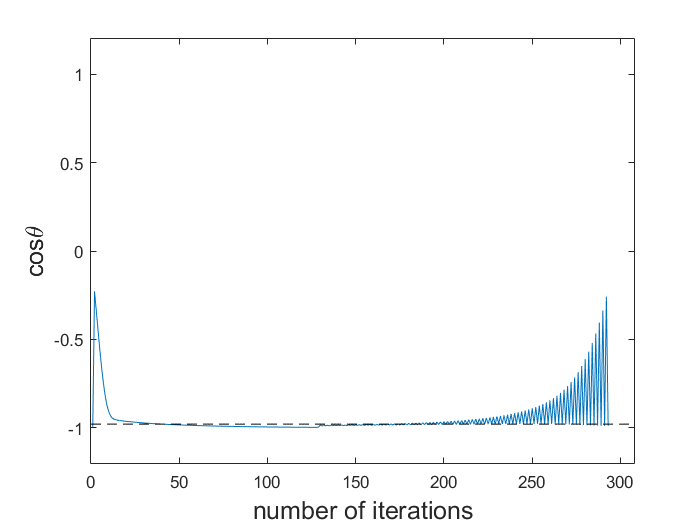}
		\caption{G19}
	\end{subfigure}	
	\begin{subfigure}{0.3\textwidth}
		\centering\includegraphics[width=\linewidth]{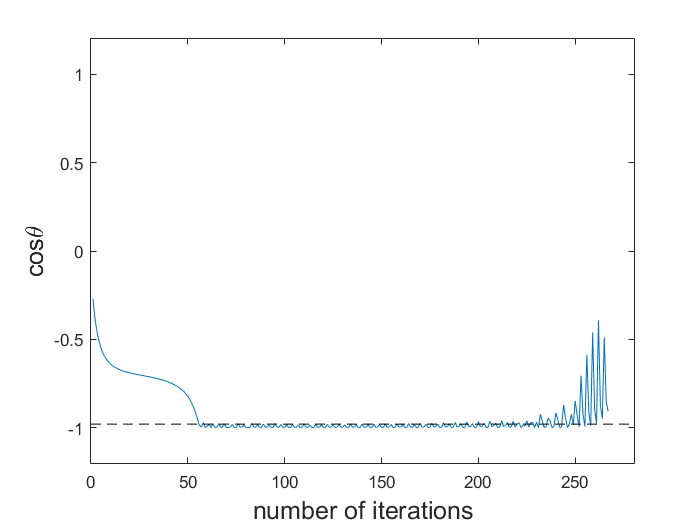}
		\caption{G24}
	\end{subfigure}	
	\caption{$\cos \theta$ plots}
	\label{fig:convergence_plots}
\end{figure}

\begin{figure}[h]
	\centering
	\includegraphics[width=80mm]{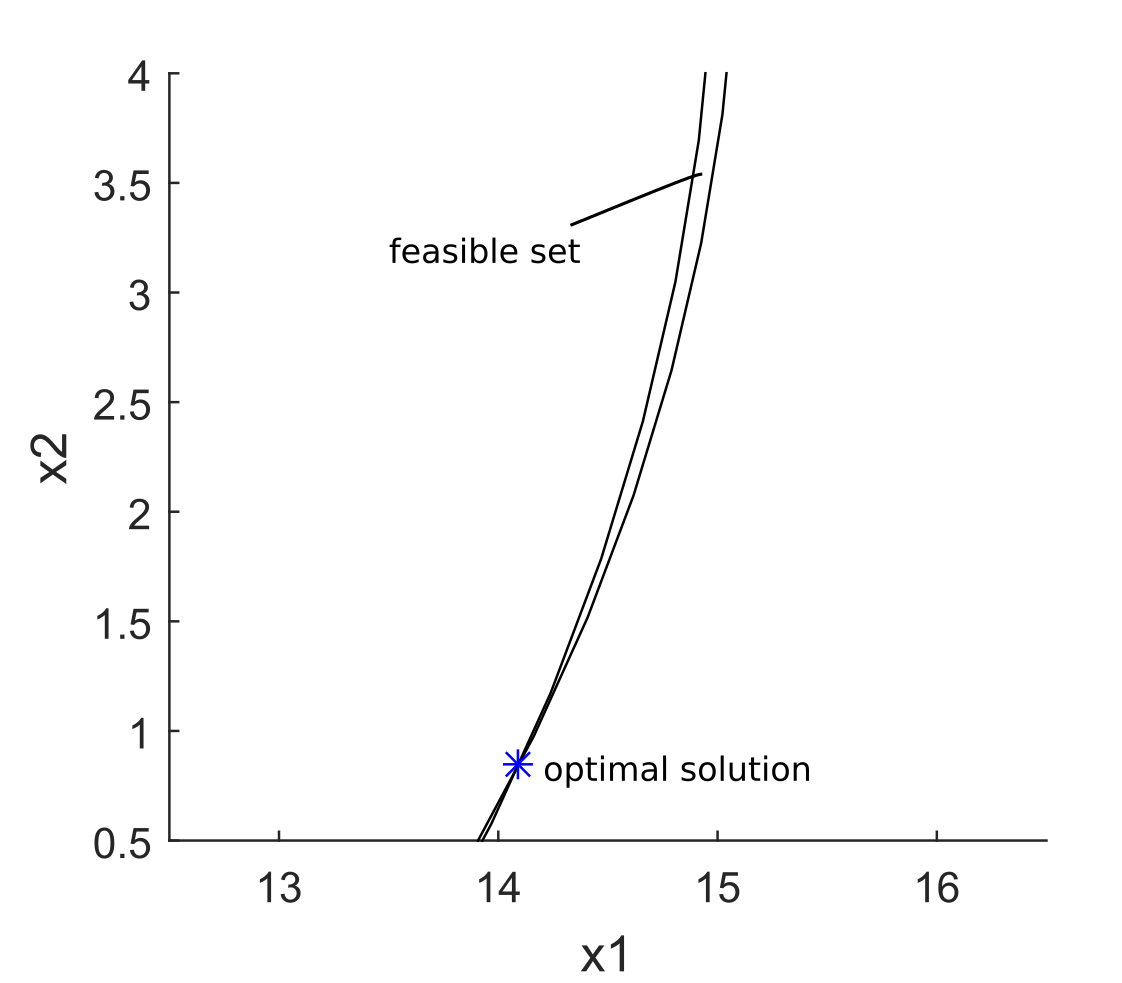}
	\caption{Feasible set for problem G06}
	\label{fig:G06_feasibleset}
\end{figure}

\subsection{Test example in shape optimization}

Although it is shown that the potential poor scaling of the logarithmic barrier function may result in increasing computational effort, we want to point out that, for the problems of \textit{categories 1 and 2}, the logarithmic barrier function formulation can be very efficient in treating a large number of constraints. We support our argument with a shape optimization problem.

Here, we show an academic convex problem. The objective is to maximize the volume of a small sphere. This small sphere is located inside a bigger sphere that acts as the geometric constraint. The shape of both spheres are represented with finite element meshes \cite{zienkiewicz1977finite}. The optimization problem writes
\begin{equation}
	\begin{split}
		& \textnormal{minimize} ~~~   -V(x), \\
		& \textnormal{subject to} ~~~  g_i(x) \leq 0, ~ i = 1,...,m,\\
	\end{split}
\end{equation}
where $V(x)$ is the volume function, $g_i(x)$ is a point-wise defined geometric constraint for the $i$-th design node, $m$ is the number of nodes of the design mesh, and $x \in \mathbb{R}^{3m}$ is the field of nodal coordinates of the design sphere mesh. The number of nodes of the small sphere (design sphere) is 19897. Thus, the total number of design variables is 59691 and the total number of constraints is 19897. We use the logarithmic barrier function for multiple constraints and choose the parameter $\zeta = 0.95$. In figure \ref{fig:sphere_iters}, the shape variation process with depicted iterations is shown. Initially, the design sphere is located close to the boundary of the constraint sphere. During the shape variation process, it moves towards the center of the constraint sphere, while adapting its shape at each iteration. One could recognize easily that the central path of this optimization problem is being approached and followed until the solution is found when constraints become active.

%Despite the regular geometry of the design and constraint structure, this optimization problem is nontrivial for first-order methods to solve due to a relatively large number of design variables and constraints, and, particularly, the unstructured mesh (shown in figure \ref{fig:mesh_comparison}). In figure \ref{fig:mesh_comparison_b} we show the mesh of the optimal design. It preserves the pattern of the initial design (figure \ref{fig:mesh_comparison_a}) nicely and is therefore very robust for the analysis and optimization. On the other hand, a distorted mesh in shape optimization often leads to unstable procedures \cite{kub2018}.
%\vskip 100mm

\begin{figure}[h]
	\centering % <-- added
	\begin{subfigure}{0.24\textwidth}
		\centering\includegraphics[width=\linewidth]{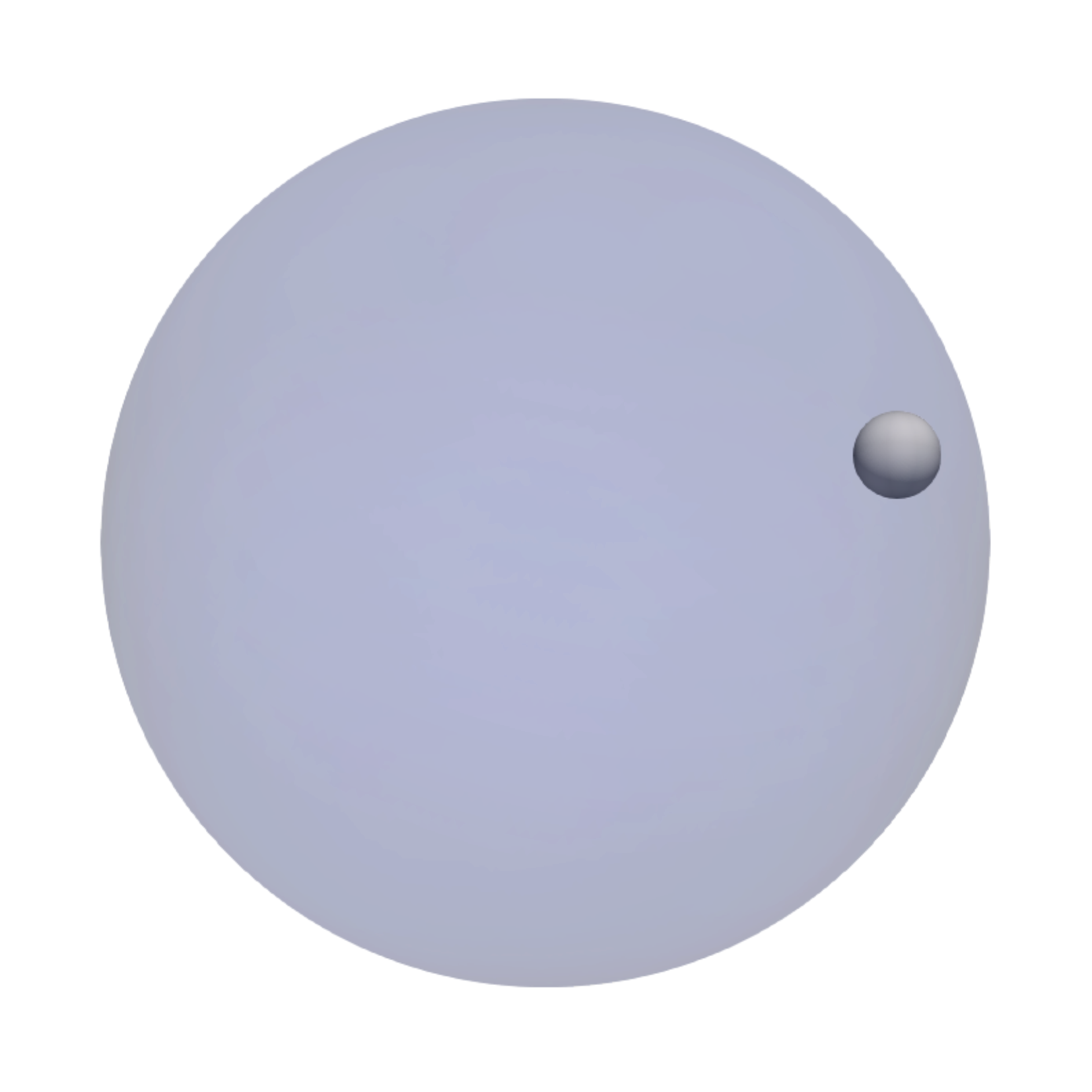}
		\caption{Initial design}
		\label{fig:1}
	\end{subfigure}\hfil % <-- added
	\begin{subfigure}{0.24\textwidth}
		\centering\includegraphics[width=\linewidth]{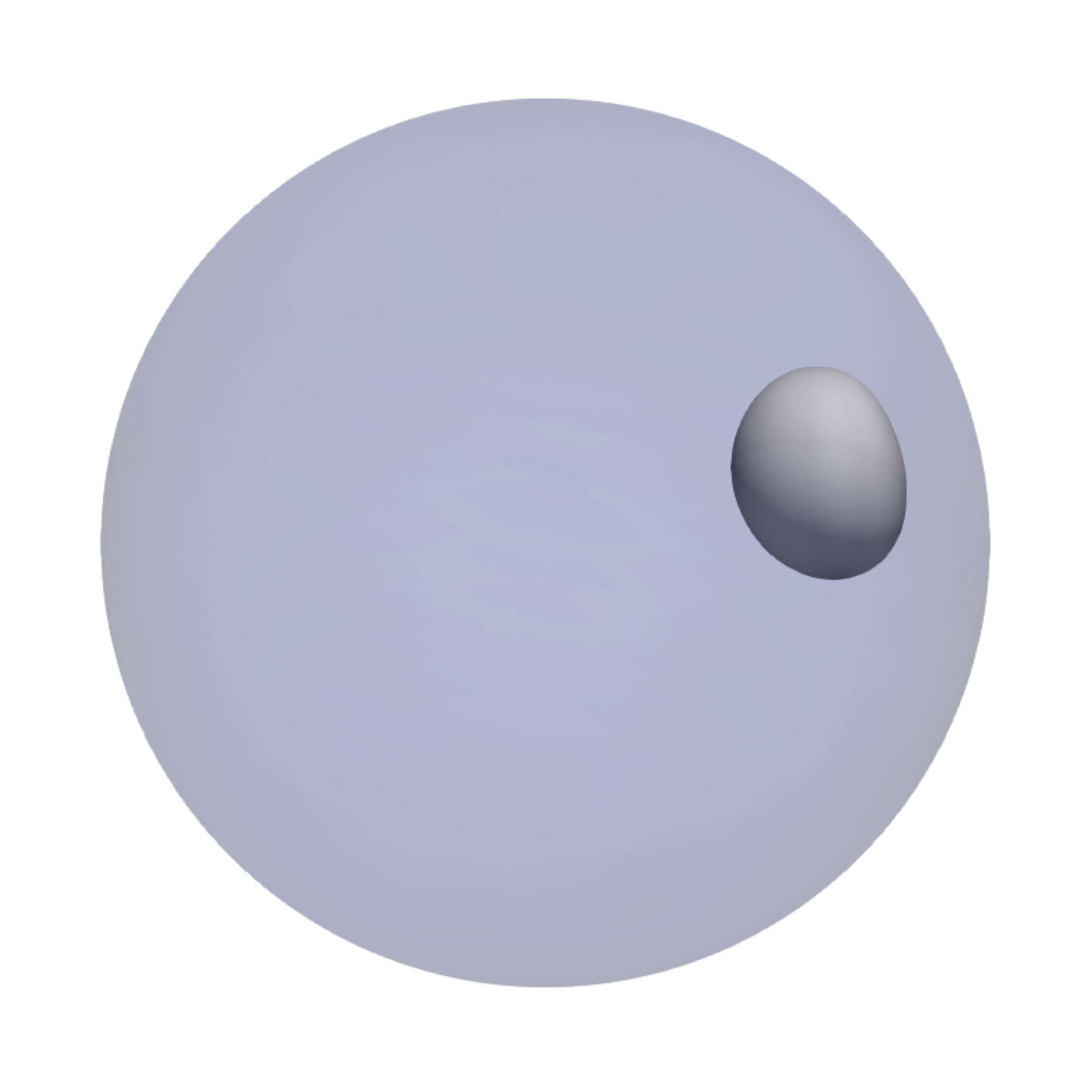}
		\caption{Iteration 15}
		\label{fig:2}
	\end{subfigure}\hfil % <-- added
	\begin{subfigure}{0.24\textwidth}
		\centering\includegraphics[width=\linewidth]{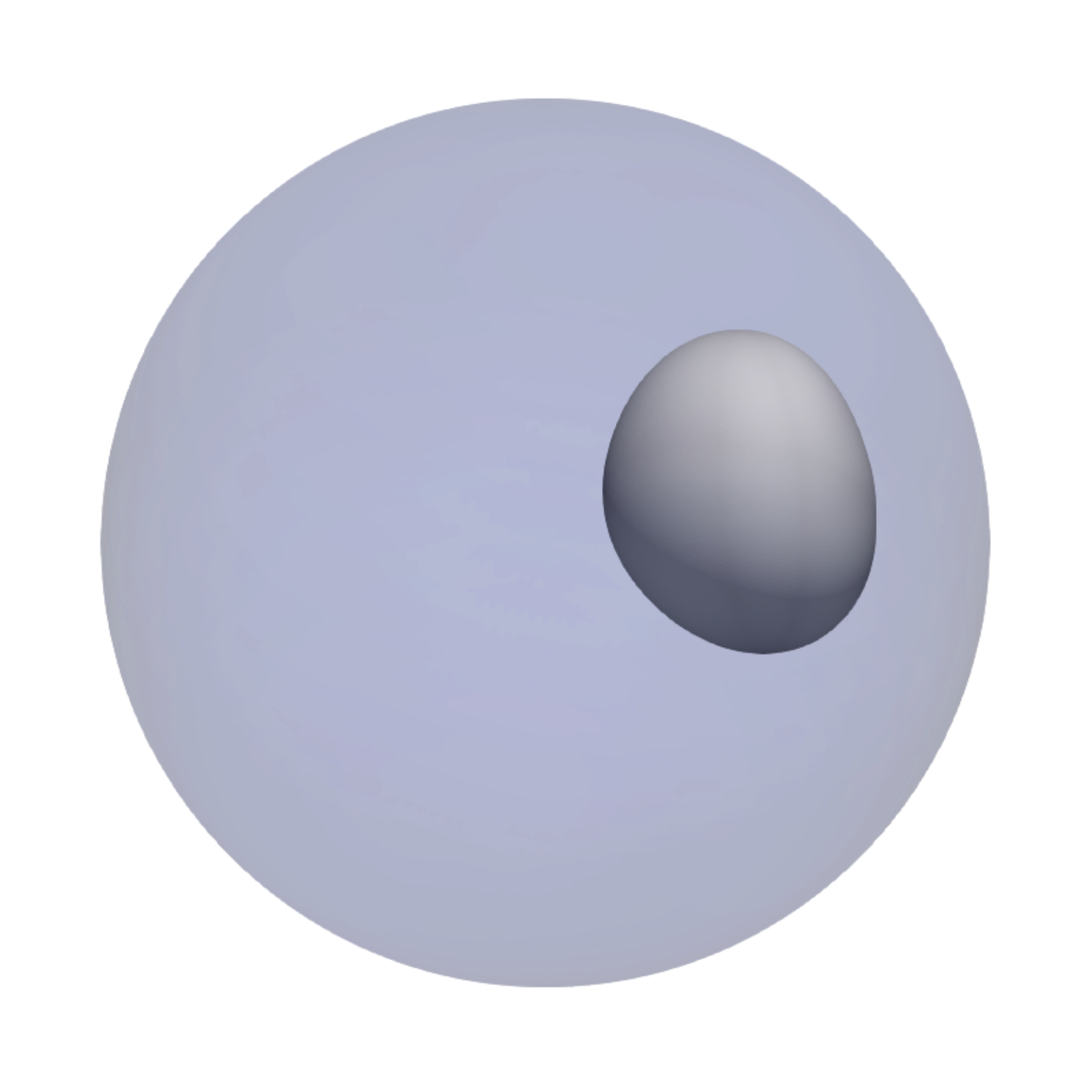}
		\caption{Iteration 30}
		\label{fig:3}
	\end{subfigure}	
	\begin{subfigure}{0.24\textwidth}
		\centering\includegraphics[width=\linewidth]{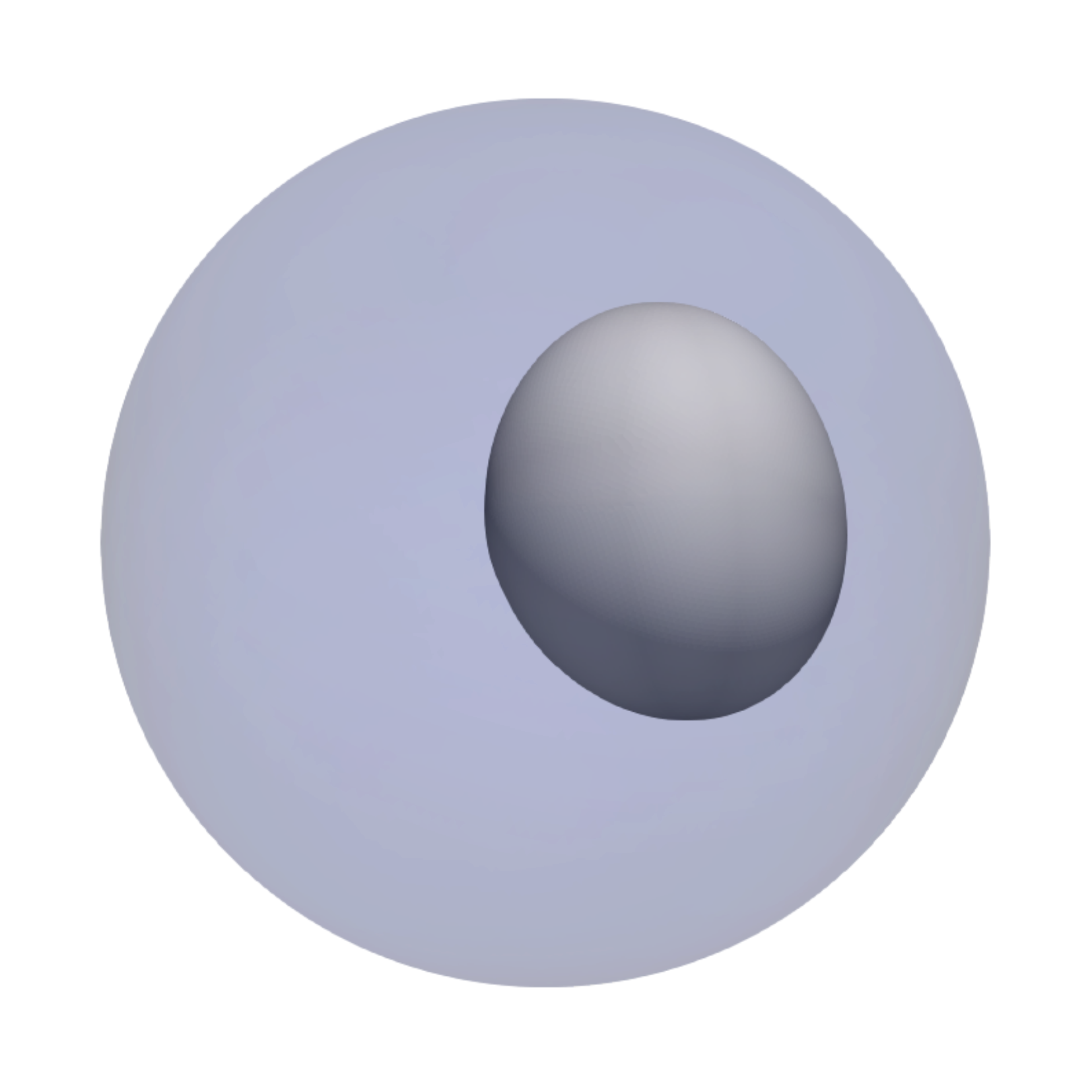}
		\caption{Iteration 45}
		\label{fig:4}
	\end{subfigure}		
	
	\medskip
	\begin{subfigure}{0.24\textwidth}
		\centering\includegraphics[width=\linewidth]{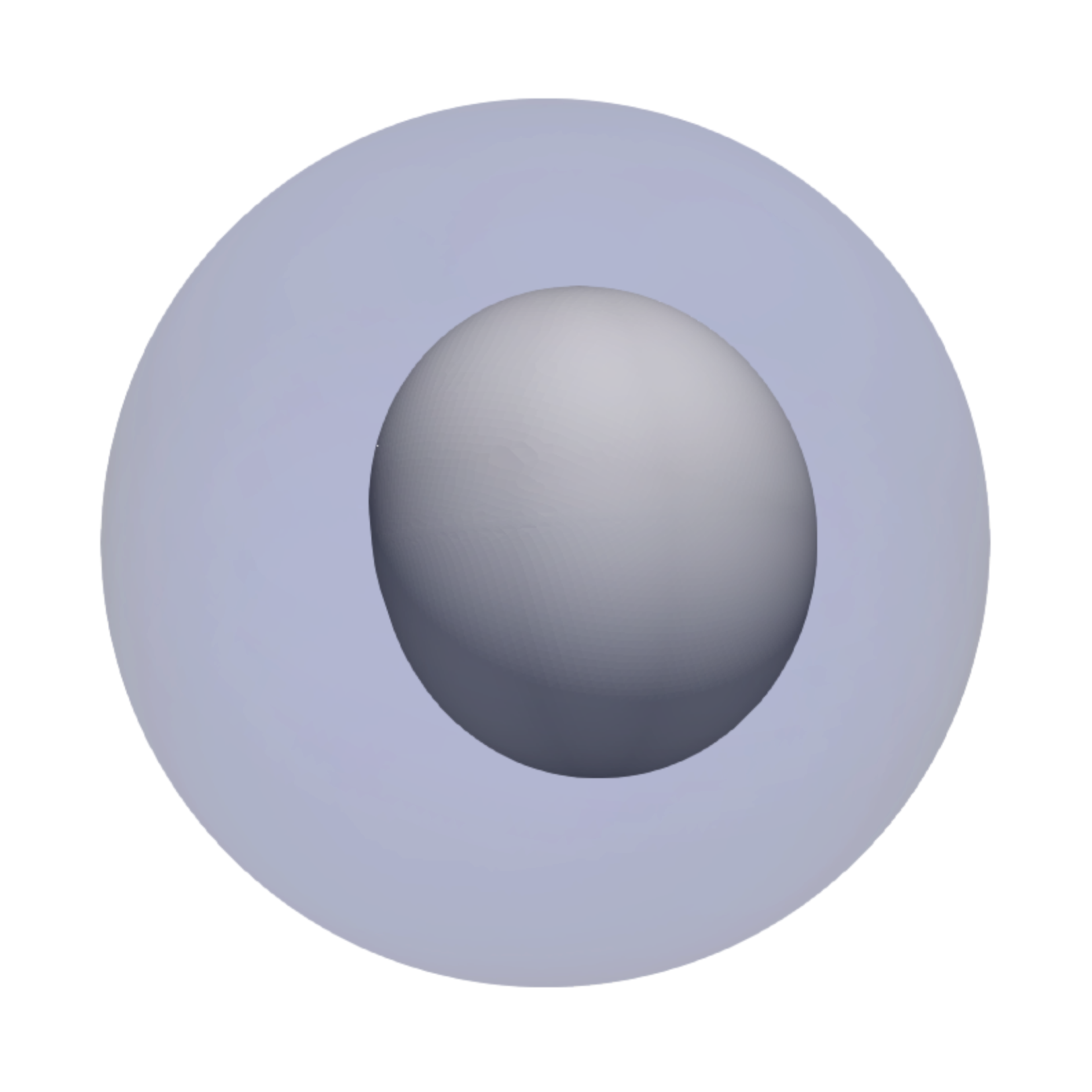}
		\caption{Iteration 60}
		\label{fig:5}
	\end{subfigure}\hfil % <-- added
	\begin{subfigure}{0.24\textwidth}
		\centering\includegraphics[width=\linewidth]{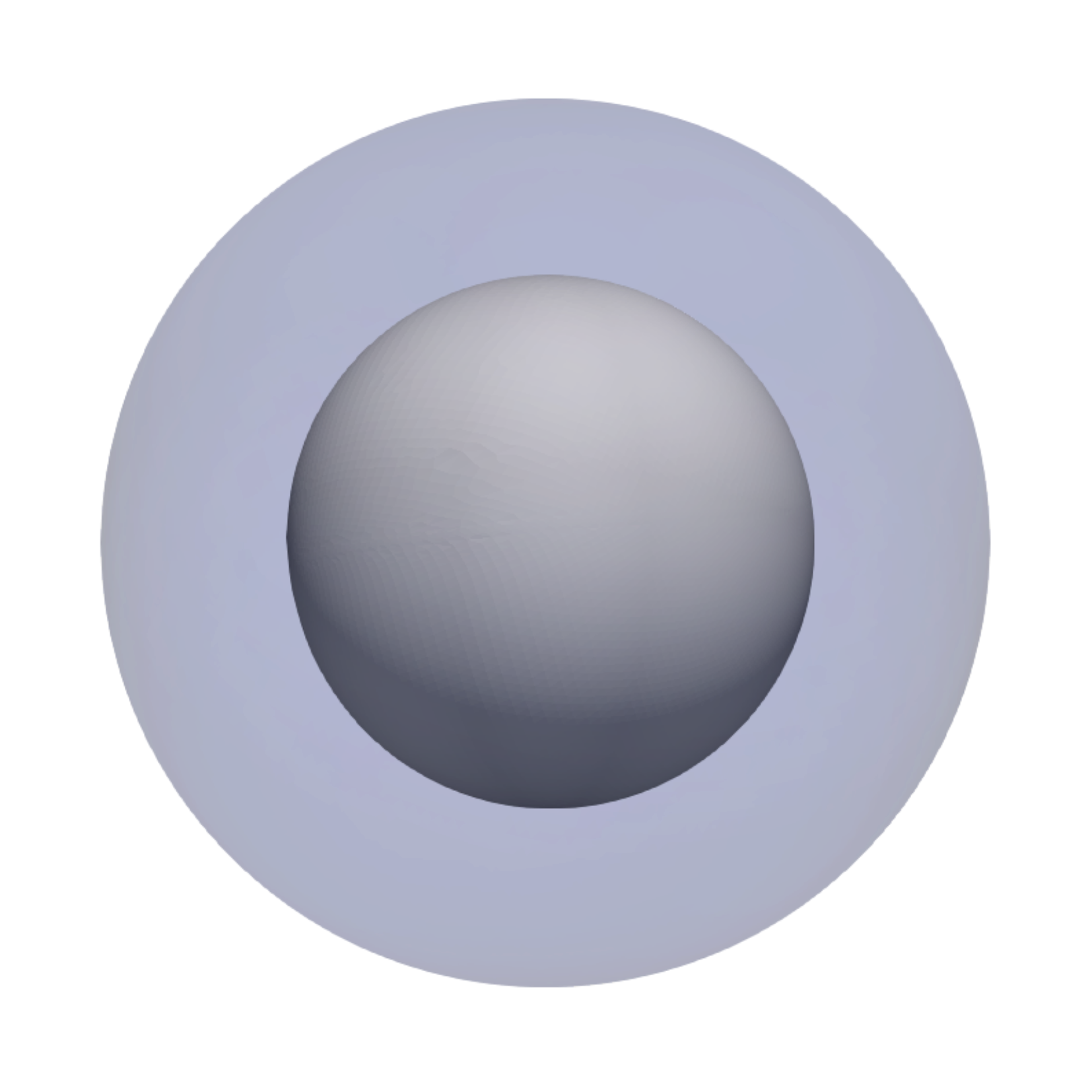}
		\caption{Iteration 70}
		\label{fig:6}
	\end{subfigure}\hfil % <-- added
	\begin{subfigure}{0.24\textwidth}
		\centering \includegraphics[width=\linewidth]{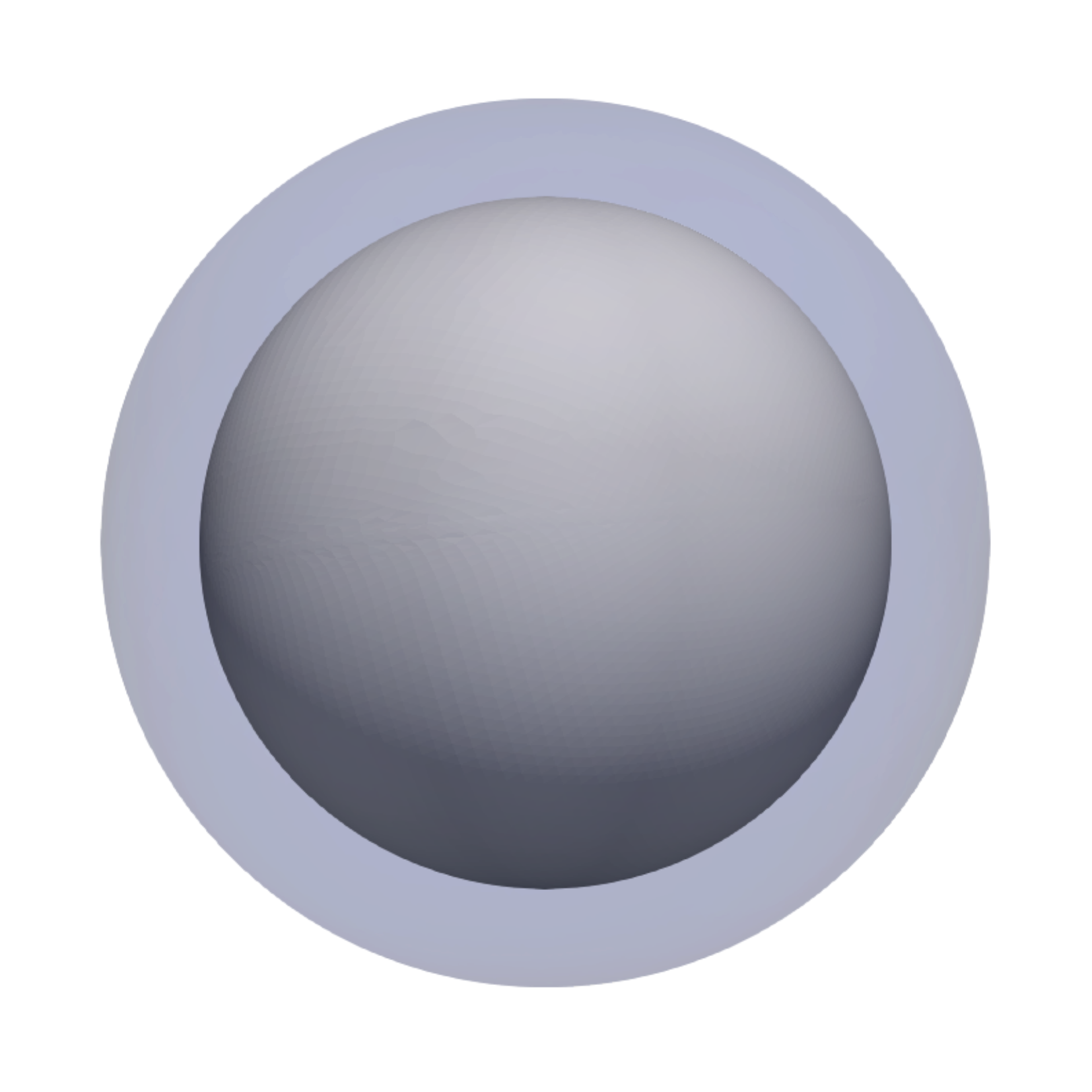}
		\caption{Iteration 80}
		\label{fig:7}
	\end{subfigure}
	\begin{subfigure}{0.24\textwidth}
		\centering	\includegraphics[width=\linewidth]{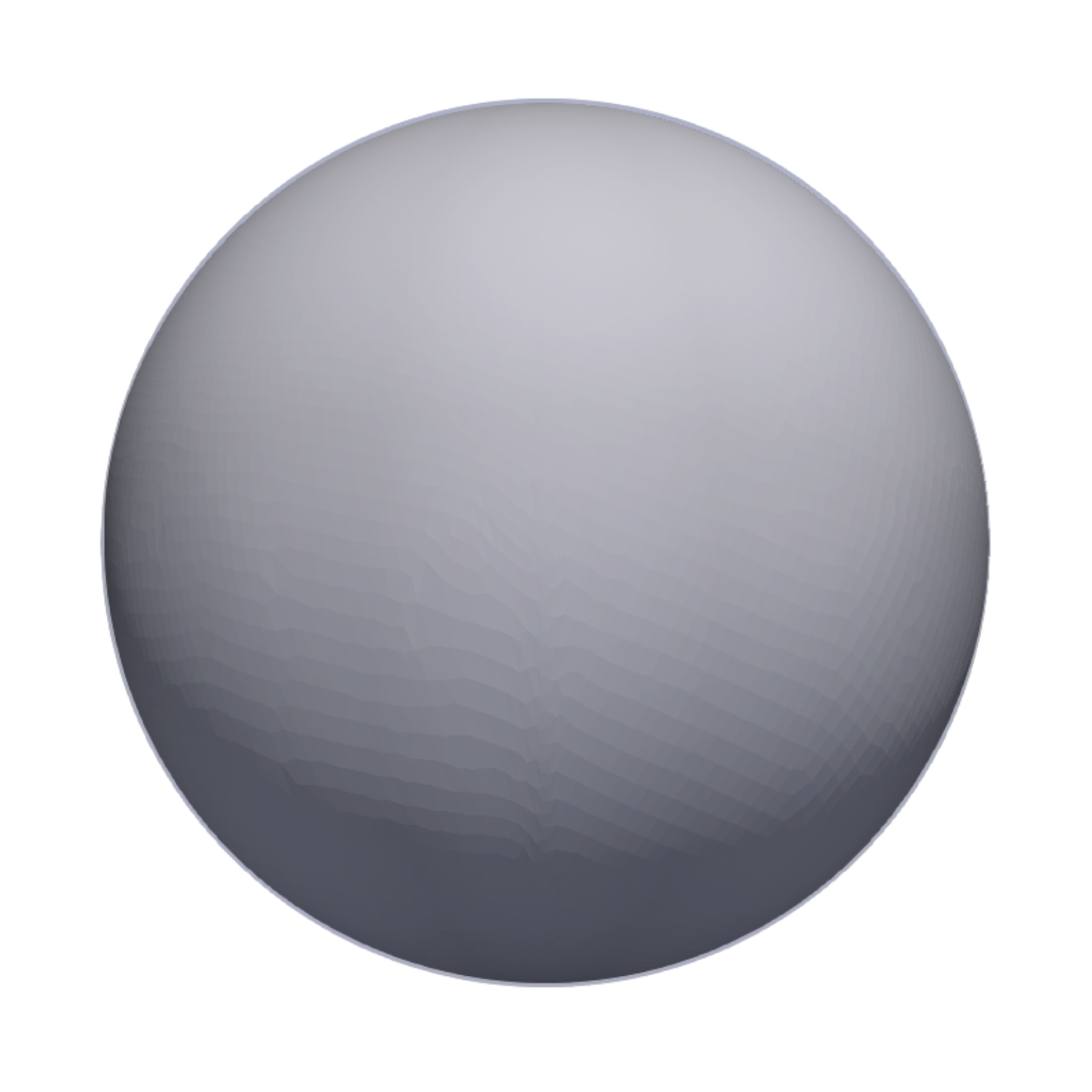}
		\caption{Iteration 101}
		\label{fig:8}
	\end{subfigure}	
	\caption{Design updates}
	\label{fig:sphere_iters}
\end{figure}

%\begin{figure}[h]
%	\centering
%	\begin{subfigure}{4cm}
%		\centering\includegraphics[width=40mm]{fig/mesh_1.png}
%		\caption{The initial sphere mesh}
%		\label{fig:mesh_comparison_a}
%	\end{subfigure}
%	\begin{subfigure}{4cm}
%		\centering\includegraphics[width=40mm]{fig/mesh_101.png}
%		\caption{Mesh at iteration 101}
%		\label{fig:mesh_comparison_b}
%	\end{subfigure}
%	\caption{Mesh comparison under the same perspective}
%	\label{fig:mesh_comparison}
%\end{figure}

\subsection{Real-world application to shape optimization}
We consider a real-world application to shape optimization. The present method is implemented in ShapeModule, which is a flexible solver-agnostic optimization platform and provides optimization algorithms as well as shape control methods, such as Vertex Morphing \cite{Hojjat}. The optimization problem is to minimize the mass of a frame structure under load-displacement constraint (i.e., the displacement of every surface node is bounded). The optimization problem writes
\begin{equation}
	\begin{split}
		& \textnormal{minimize} ~~~~   M(x), \\
		& \textnormal{subject to} ~~~  g_i(x) \leq 0, ~ i = 1,...,m,
	\end{split}
\end{equation}
where $M(x)$ is the function for the mass, $g_i(x)$ is a point-wise formulated displacement constraint for the $i$-th node, $m$ is the number of nodes of the design surface mesh, and $x \in \mathbb{R}^{3m}$ is the field of nodal coordinates of the design surface mesh.  The number of design variables is 144423, and the number of constraints is 48141. Note that for multiple constraints, we can use the logarithmic barrier function formulation as in the previous test examples. Each single displacement constraint gradient can be efficiently computed using the adjoint sensitivity analysis. In this application, we use the load-displacement sensitivity provided by the software OptiStruct to conform with a standard industrial design chain. We choose the parameter $\zeta = 0.95$.

\begin{figure}[h]
	\centering
	\begin{subfigure}{8cm}
		\centering\includegraphics[width=80mm]{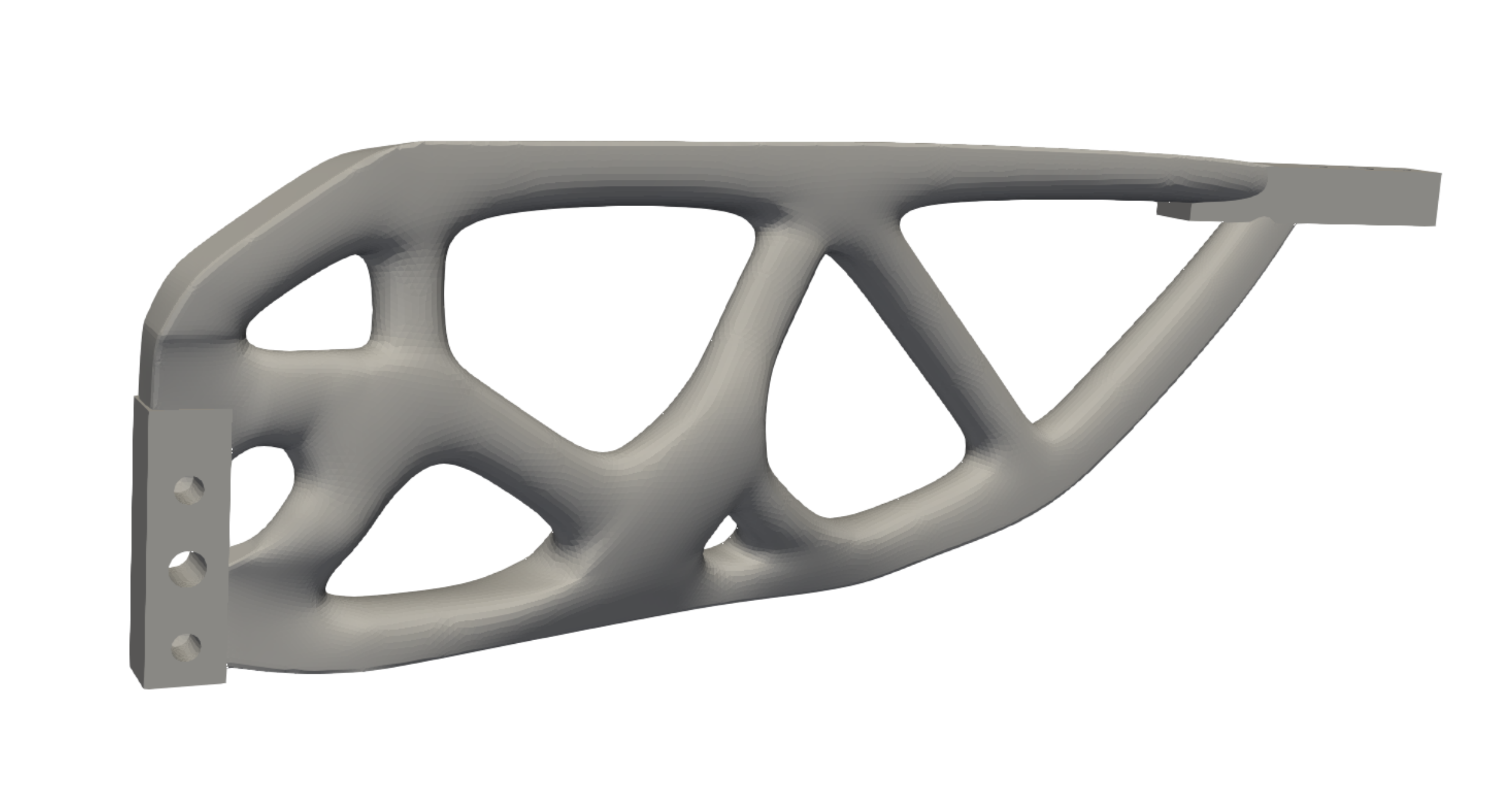}
		\caption{The initial frame design}
		\label{fig:frame_iter_1}
	\end{subfigure}
	\begin{subfigure}{8cm}
		\centering\includegraphics[width=80mm]{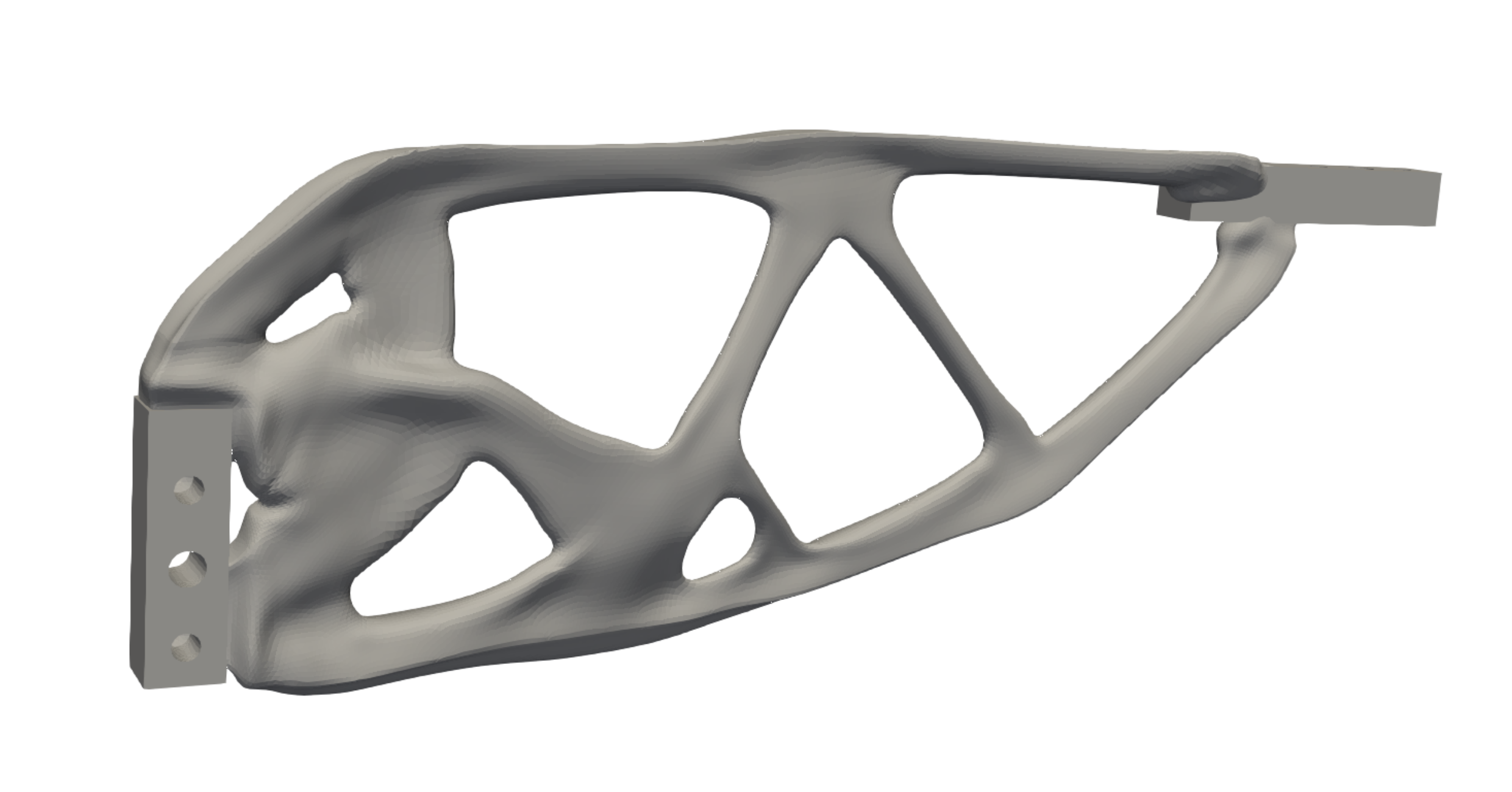}
		\caption{The optimized frame design}
		\label{fig:frame_iter_193}
	\end{subfigure}
	\caption{Design optimization of a real-world frame structure}
	\label{fig:frame}
\end{figure}

\begin{figure}[h]
	\centering
	\begin{footnotesize}
	\executeiffilenewer{fig/frame_objective1.svg}{fig/frame_objective1.pdf}%
	{inkscape -z -C --file=fig/frame_objective1.svg %
		--export-pdf=fig/frame_objective1.pdf --export-latex}%
	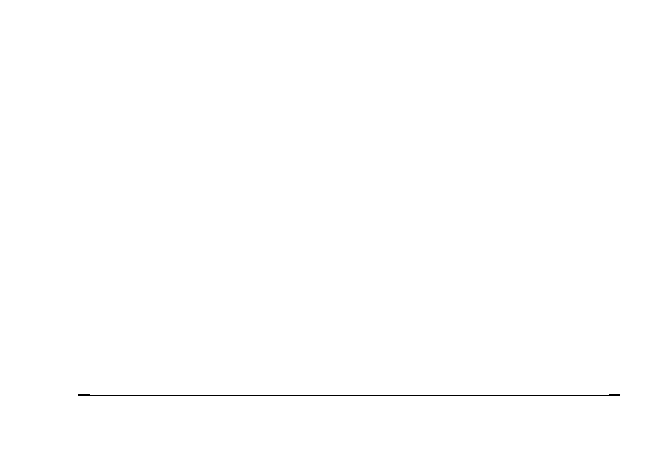%

		\caption{Plot of the frame objective}		
		\label{fig:frame_objective}
	\end{footnotesize}
\end{figure}

\begin{figure}[h]
	\centering
	\begin{footnotesize}
	\executeiffilenewer{fig/frame_constraint_plot1.svg}{fig/frame_constraint_plot1.pdf}%
	{inkscape -z -C --file=fig/frame_constraint_plot1.svg %
		--export-pdf=fig/frame_constraint_plot1.pdf --export-latex}%
	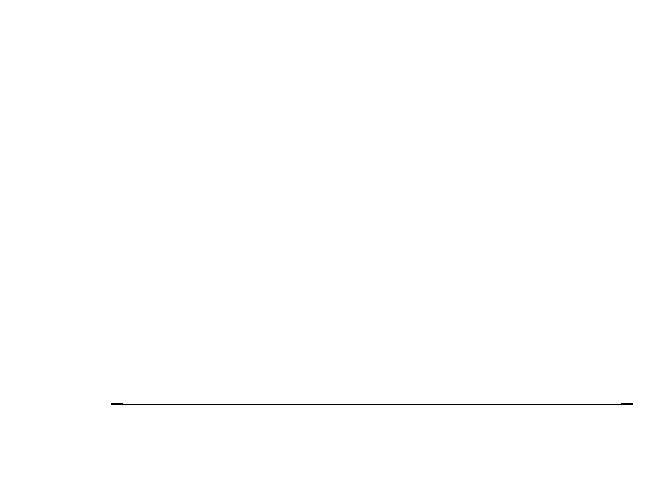%

		\caption{Plot of the frame constraint}		
		\label{fig:frame_constraint_plot}
	\end{footnotesize}
\end{figure}

\begin{figure}[h]
	\centering
	\begin{footnotesize}
	\executeiffilenewer{fig/frame_centrality1.svg}{fig/frame_centrality1.pdf}%
	{inkscape -z -C --file=fig/frame_centrality1.svg %
		--export-pdf=fig/frame_centrality1.pdf --export-latex}%
	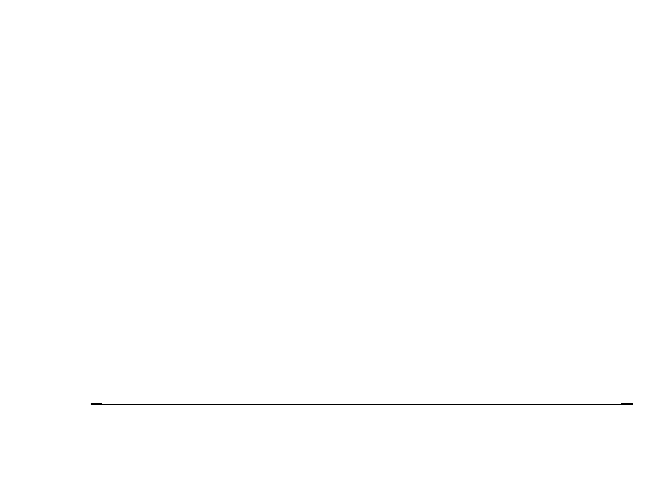%

		\caption{Plot of the centrality measure}
		\label{fig:frame_centrality}
	\end{footnotesize}
\end{figure}

In figure \ref{fig:frame} we show the initial frame design and the shape optimized design after 194 iterations. The mass of the structure is reduced by $ 41\% $ as shown in figure \ref{fig:frame_objective}. In figure \ref{fig:frame_constraint_plot}, we show a plot of maximum constraint value $g = max\{g_i\}$ of each iteration. In figure \ref{fig:frame_centrality}, we show that the optimization is able to approach and follow a central path within the $\zeta$-neighborhood.

\begin{myremark}
	By following a central path, an intermediate design improves not only the objective function but also the constraint function. Take the design of iteration 80 as an example: the mass is reduced by $20.5\%$, and the displacement is reduced by $12.1 \%$. These designs may enrich the design options if the original problem is reformulated as a bi-objective optimization problem, in which both mass and the maximum displacement are set as objectives. The resulting intermediate designs alongside a central path are approximated Pareto solutions.
\end{myremark}
%{\bf Remark 7:~} 

\section{Conclusion}
\label{sec:conclusion}
This paper proposes a gradient descent akin method for solving inequality constrained nonlinear programming problems. We show the global behavior and convergence of the method and prove local convergence under the introduced \textit{relative convex condition}. Robustness is shown in various computational test problems. It is also shown that the method exhibits the potential for solving nonconvex nonlinear problems. The method may be especially suited for large-scale problems due to the very cheap computational cost in each iteration. Practical implementations based on the present method are of interest in future work.

% Acknowledgments here
\section*{Acknowledgments.}
% Enter the text of acknowledgments here
The work by the author LC was done during his work as one of the coordinators at the Bavarian Graduate School of Computational Engineering (BGCE). The working experiences and financial support are gratefully acknowledged. The authors are grateful to the ShapeModule team at the BMW Group for providing a real-world model and their framework. The authors also thank Jian Cui from the Helmholtz Pioneer Campus for his generosity in proofreading this manuscript.

\bibliographystyle{spmpsci}      % mathematics and physical sciences
%\bibliographystyle{spphys}       % APS-like style for physics
%\bibliography{}   % name your BibTeX data base
\bibliography{references}

% Non-BibTeX users please use
%\begin{thebibliography}{}
%%
%% and use \bibitem to create references. Consult the Instructions
%% for authors for reference list style.
%%
%\bibitem{RefJ}
%% Format for Journal Reference
%Author, Article title, Journal, Volume, page numbers (year)
%% Format for books
%\bibitem{RefB}
%Author, Book title, page numbers. Publisher, place (year)
%% etc
%\end{thebibliography}

\end{document}